\newtheorem{lemma}{Lemma}[section]
\newtheorem{teo}[lemma]{Theorem}
\newtheorem{prop}[lemma]{Proposition}
\newtheorem{cor}[lemma]{Corollary}
\theoremstyle{definition}
\newtheorem{defn}[lemma]{Definition}
\newtheorem{quest}[lemma]{Question}
\newtheorem{rem}[lemma]{Remark}
\newcommand{\matN}{\ensuremath {\mathbb{N}}}
\newcommand{\matR} {\ensuremath {\mathbb{R}}}
\newcommand{\matZ} {\ensuremath {\mathbb{Z}}}
\newcommand{\matP} {\ensuremath {\mathbb{P}}}
\newcommand{\matH} {\ensuremath {\mathbb{H}}}
\newcommand{\matRP} {\ensuremath {\mathbb{RP}}}
\newcommand{\calT} {\ensuremath {\mathcal{T}}}
\newcommand{\calH}{\ensuremath {\mathcal{H}}}
\newcommand{\calS}{\ensuremath {\mathcal{S}}}
\newcommand{\calV}{\ensuremath {\mathcal{V}}}
\newcommand{\interior}[1]{{\rm int}(#1)}
\newcommand{\nota} [1] {\caption{\footnotesize{#1}}}
\newcommand{\Isom}{\cong}
\newcommand{\alt}{{\rm alt}}
\newcommand{\inc}{{\rm inc}}
\newcommand{\Full}{{\rm Full}}
\newcommand{\stra} {\ensuremath {{\rm st}}}
\newcommand{\str} {\ensuremath {{\rm str}}}
\newcommand{\mathno}{\ensuremath{\overline{\matH^n}}}
\newcommand{\Tt}{\mathcal{T}}
\newcommand{\Vol}{{\rm vol}}
\newcommand{\Conv}{{\rm Conv}}
\newcommand{\algvol}{{\rm algvol}}
\newcommand{\vol}{{\rm vol}}
\newcommand{\vare}{\varepsilon}
\newcommand{\commento}[1]{}
\author{Stefano Francaviglia}
\address{Dipartimento di Matematica Universit\`a di Bologna, P.zza di Porta S. Donato 5, 40126 Bologna, Italy}
\email{stefano.francaviglia@unibo.it}
\author{Roberto Frigerio}
\address{Dipartimento di Matematica ``L. Tonelli'', Largo Pontecorvo 5, 56127 Pisa, Italy}
\email{frigerio@dm.unipi.it}
\author{Bruno Martelli}
\address{Dipartimento di Matematica ``L. Tonelli'', Largo Pontecorvo 5, 56127 Pisa, Italy}
\email{martelli@dm.unipi.it}
\title[Stable complexity and simplicial volume of manifolds]{Stable complexity \\ and simplicial volume of manifolds}
\keywords{Complexity, triangulations, coverings of manifolds, simplicial volume}
\begin{document}

\begin{abstract}
Let the $\Delta$-complexity $\sigma(M)$ of a closed
manifold $M$ be the minimal number of simplices in a triangulation of $M$. 
Such a quantity is clearly submultiplicative with respect to finite coverings, and by taking the infimum on all finite coverings of $M$ normalized by the covering degree we can promote $\sigma$ to a multiplicative invariant, a \emph{characteristic number} already considered by Milnor and Thurston, which we denote by $\sigma_\infty(M)$ and call the \emph{stable $\Delta$-complexity} of $M$. 

We study here the relation between the stable $\Delta$-complexity $\sigma_\infty(M)$ of $M$ and Gromov's simplicial volume $\|M\|$. It is immediate to show that $\|M\|\leqslant \sigma_\infty (M)$ and it is natural to ask whether the two quantities coincide on aspherical manifolds with residually finite fundamental group. We show that this is not always the case: there is a constant $C_n<1$ such that $\|M\|\leqslant C_n\sigma_\infty(M)$ for any hyperbolic manifold $M$ of dimension $n\geqslant 4$. 

The question in dimension $3$ is still open in general. We prove that $\sigma_\infty(M) = \|M\|$ for any aspherical irreducible 3-manifold $M$ whose JSJ decomposition consists of Seifert pieces and/or hyperbolic pieces commensurable with the figure-eight knot complement. The equality holds for all closed hyperbolic 3-manifolds if a particular three-dimensional version of the Ehrenpreis conjecture is true. 
\end{abstract}

\maketitle

\section*{Introduction}
Following Milnor and Thurston \cite{MiThu}, a numerical invariant $\alpha(M)$ associated to any closed $n$-manifold $M$ is a \emph{characteristic number} if for every degree-$d$ covering $M\stackrel{d}{\to} N$ we have $\alpha(M) = d\cdot \alpha(N)$. Two important characteristic numbers are the Euler characteristic $\chi(M)$ and the \emph{simplicial volume} $\|M\|$ introduced by Gromov~\cite{Gro}, which equals (up to a constant factor depending on $n$) the volume of $M$ when $M$ is a hyperbolic manifold.

In \cite{MiThu} Milnor and Thurston introduce various characteristic numbers, including the following one. Let $\sigma(M)$ be the \emph{$\Delta$-complexity} of $M$,
\emph{i.e.}~the minimal number of tetrahedra in a triangulation of $M$. We employ here the word ``triangulation'' in a loose sense, as is customary in geometric topology: a triangulation is the realization of $M$ as the glueing of finitely many simplices via some simplicial pairing of their facets. The $\Delta$-complexity is clearly not a characteristic number:
for every degree-$d$ covering $M\stackrel d\to N$ 
we have 
$$\sigma(M)\leqslant d\cdot \sigma(N),$$
but such inequality is very often strict, \emph{i.e.}~we typically get $\sigma(M)<d\cdot \sigma(N)$. We can however easily promote $\sigma$ to a characteristic number as follows. 
We define the \emph{stable $\Delta$-complexity} $\sigma_\infty(M)$ of $M$ by setting
$$\sigma_\infty (M) = \inf_{\widetilde M \stackrel d\to M} \left\{\frac{\sigma(\widetilde M)}d\right\}$$
where the infimum is taken over all finite coverings $\widetilde M \stackrel d\to M$ of any finite degree $d$. Stable $\Delta$-complexity is easily seen to be a characteristic number, that is we have $$\sigma_\infty(M) = d\cdot \sigma_\infty(N)$$ for every finite covering $M\stackrel d\to N$. The characteristic number $\sigma_\infty$ was first defined by Milnor and Thurston in \cite{MiThu}. The following easy inequalities are established 
in Subsection~\ref{easy:sub}.
\begin{prop}\label{easy:prop}
Let $M$ be a closed manifold. We have
$$\|M\|\leqslant \sigma_\infty(M) \leqslant \sigma(M).$$
\end{prop}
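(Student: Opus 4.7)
The statement has two inequalities. The right-hand one, $\sigma_\infty(M) \leqslant \sigma(M)$, is essentially immediate from the definition: the identity $M \to M$ is a degree-$1$ covering, so it participates in the infimum defining $\sigma_\infty$, yielding $\sigma_\infty(M) \leqslant \sigma(M)/1$. No real work is needed here.

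The left-hand inequality $\|M\| \leqslant \sigma_\infty(M)$ is the substantive one, but it reduces to two standard facts about simplicial volume: (a) for any closed oriented manifold $M$, the simplicial volume is bounded above by the $\Delta$-complexity, $\|M\| \leqslant \sigma(M)$, because the sum of the simplices of any triangulation of $M$ (taken with appropriate signs) represents the fundamental class of $M$ and has $\ell^1$-norm equal to $\sigma(M)$; and (b) simplicial volume is multiplicative under finite coverings, $\|\widetilde M\| = d \cdot \|M\|$ for every degree-$d$ covering $\widetilde M \stackrel{d}{\to} M$, which is Gromov's original observation that promotes $\|\cdot\|$ to a characteristic number in the sense of Milnor--Thurston. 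For non-orientable $M$ one passes to the orientation double cover, since both $\sigma_\infty$ and $\|\cdot\|$ are characteristic numbers.

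Combining (a) and (b), for every finite covering $\widetilde M \stackrel{d}{\to} M$ we get
$$d\cdot \|M\| \;=\; \|\widetilde M\| \;\leqslant\; \sigma(\widetilde M),$$
so $\|M\| \leqslant \sigma(\widetilde M)/d$. Taking the infimum over all finite coverings of $M$ gives the desired inequality $\|M\| \leqslant \sigma_\infty(M)$.

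There is no real obstacle: the entire content of the proposition lies in quoting the two classical facts above. I would simply recall them briefly (with a citation to Gromov's original paper \cite{Gro} for the multiplicativity of $\|\cdot\|$ under finite coverings and for the estimate $\|M\|\leqslant \sigma(M)$) and perform the one-line chain of inequalities. The only point worth a brief remark is the orientability assumption: both characteristic numbers on a non-orientable $M$ are defined via the orientation cover, and the inequality is therefore inherited from the orientable case.
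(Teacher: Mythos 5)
Your proof follows exactly the same route as the paper: the identity covering gives $\sigma_\infty(M)\leqslant\sigma(M)$, and the left inequality is obtained by combining $\|\widetilde M\|\leqslant\sigma(\widetilde M)$ with the multiplicativity of $\|\cdot\|$ under finite coverings. The one point you gloss over, and which the paper addresses explicitly, is the justification of $\|M\|\leqslant\sigma(M)$ itself: since $\sigma$ is defined via \emph{loose} triangulations, the facet identifications are arbitrary simplicial pairings, and the signed sum of orientation-preserving parameterizations of the simplices is in general \emph{not} a singular cycle --- choosing ``appropriate signs'' does not suffice, because glued faces of two simplices may differ by a precomposition with a nontrivial permutation of the vertices. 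The paper repairs this by replacing each simplex $s_i$ with its alternation $\alt(s_i)$, the signed average over all vertex permutations; the resulting chain is a genuine cycle representing the fundamental class and its $\ell^1$-norm is still at most $\sigma(M)$, which is all that is needed. So your argument is correct in outline but needs this alternation (or some equivalent device, e.g.\ passing to a barycentric-type subdivision) to make step (a) rigorous for loose triangulations.
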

The main question we address here is the following:
\begin{quest} \label{main:quest}
For which closed manifolds $M$ do we have $\|M\|= \sigma_\infty (M)$?
\end{quest}

Among the motivations for studying such a problem, we mention the following (open) question of Gromov:

\begin{quest}[\cite{Gromov2}, page 232]\label{gromov:conj} 
Let $M$ be an aspherical closed manifold. Does $\|M\|=0$ imply $\chi (M)=0$? 
\end{quest}

It turns out that if $\|M\| = \sigma_\infty(M)$ on aspherical manifolds then one could easily answer Gromov's question, thanks to the following simple fact, proved below in Subsection \ref{easy:sub}.

\begin{prop} \label{chi:prop}
Let $M$ be a closed manifold. If $\sigma_\infty (M)=0$ then $\chi(M)=0$.
\end{prop}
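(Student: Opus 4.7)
The strategy hinges on two facts: the Euler characteristic is multiplicative under finite coverings, i.e.\ $\chi(\widetilde M)=d\cdot \chi(M)$ whenever $\widetilde M\to M$ is a degree-$d$ covering, and there is a linear bound $|\chi(M)|\leqslant C_n\,\sigma(M)$ for any closed $n$-manifold $M$, where $C_n$ depends only on $n$. Combining these, if $\widetilde M\stackrel{d}{\to}M$ is any finite covering we obtain
\[
|\chi(M)|\;=\;\frac{|\chi(\widetilde M)|}{d}\;\leqslant\;C_n\cdot\frac{\sigma(\widetilde M)}{d},
\]
and taking the infimum over all finite coverings yields $|\chi(M)|\leqslant C_n\,\sigma_\infty(M)$. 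In particular $\sigma_\infty(M)=0$ forces $\chi(M)=0$.

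The only step requiring an argument is the bound $|\chi(M)|\leqslant C_n\,\sigma(M)$. Fix a triangulation of $M$ realizing $\sigma(M)$, and let $f_k$ denote the number of $k$-simplices. Since $\chi(M)=\sum_{k=0}^n(-1)^kf_k$, one has $|\chi(M)|\leqslant \sum_{k=0}^n f_k$. In the loose sense of triangulation used in the paper (simplices with simplicial face pairings), each $n$-simplex contributes at most $\binom{n+1}{k+1}$ many $k$-faces to $f_k$ before identifications, so $f_k\leqslant \binom{n+1}{k+1}\sigma(M)$. Summing in $k$ gives the inequality with the explicit constant
\[
C_n\;=\;\sum_{k=0}^{n}\binom{n+1}{k+1}\;=\;2^{n+1}-1.
\]

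There is essentially no obstacle here: once the naive face-counting bound is written down, the proof is a two-line application of multiplicativity of $\chi$ together with the definition of $\sigma_\infty$ as an infimum. The only mild subtlety worth flagging is that the loose notion of triangulation does allow self-identifications among the faces of a single $n$-simplex, but this can only decrease the $f_k$ compared with the bound $\binom{n+1}{k+1}\sigma(M)$, so the estimate still goes through.
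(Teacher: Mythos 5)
Your proof is correct and follows essentially the same route as the paper's: the paper proves the stronger Proposition~\ref{chi2:prop}, $|\chi(M)|\leqslant 2^{n+1}\sigma_\infty(M)$, by exactly the same two steps (a cell-counting bound $|\chi(M)|\leqslant 2^{n+1}\sigma(M)$ from the cellular structure induced by a loose triangulation, followed by multiplicativity of $\chi$ under finite coverings). Your slightly sharper constant $2^{n+1}-1$ versus the paper's $2^{n+1}$ is immaterial for the conclusion.
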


It is tempting to guess that $\|M\| = \sigma_\infty (M)$ 
at least when $M$ is hyperbolic, because $\pi_1(M)$ is residually finite and hence $M$ has plenty of finite coverings of arbitrarily large injectivity radius. However, we show here that this guess is wrong.

\begin{teo} \label{4:teo}
In every dimension $n\geqslant 4$ there is a constant $C_n<1$ such that $\|M\|\leqslant C_n\sigma_\infty(M)$ for every closed hyperbolic $n$-manifold $M$.
\end{teo}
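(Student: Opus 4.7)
The plan is to refine the straightening bound $\|M\|\leq \sigma_\infty(M)$ of Proposition~\ref{easy:prop} by exploiting a dihedral-angle obstruction that is available only in dimensions $n\geq 4$. Given any finite covering $\widetilde M\stackrel{d}{\to}M$ and any triangulation $\tau$ of $\widetilde M$ with $N=\sigma(\widetilde M)$ top-dimensional simplices, straightening each simplex geodesically in $\matH^n$ produces a chain representing $[\widetilde M]$, so the signed volumes sum to $d\cdot\Vol(M)$. By Haagerup--Munkholm, each straight $n$-simplex has volume at most $v_n:=\Vol(\Delta^n_{\mathrm{reg}})$, attained only on the regular ideal simplex; this alone recovers $\|M\|\leq \sigma_\infty(M)$, and our task is to subtract from this bound a fraction that is uniform in $M$, $\widetilde M$ and $\tau$.

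The crucial observation is that at every codimension-$2$ face of $\tau$ the dihedral angles of the incident straight simplices must sum exactly to $2\pi$. A direct computation identifies the dihedral angle of the regular ideal $n$-simplex with that of the Euclidean regular $(n-1)$-simplex, giving $\alpha_n=\arccos(1/(n-1))$. By Niven's theorem, $\alpha_n/\pi$ is irrational for every $n\geq 4$ (the argument fails precisely at $n=3$, where $6\alpha_3=2\pi$ underlies the Bianchi tilings). It follows that
\[
\delta_n \;:=\; \min\bigl\{\,|2\pi-k\alpha_n|\,:\, k\in\matN,\ k\leq 4\pi/\alpha_n\,\bigr\} \;>\; 0.
\]
Moreover, a compactness argument on the moduli space of straight $n$-simplices (compactified by admitting ideal vertices and degenerate configurations) produces a non-decreasing function $h_n\colon[0,\pi]\to[0,\infty)$ with $h_n(0)=0$ and $h_n(x)>0$ for $x>0$, such that every straight simplex $\Delta$ and every dihedral angle $\theta$ of $\Delta$ satisfy $v_n-\Vol(\Delta)\geq h_n(|\theta-\alpha_n|)$.

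It remains to sum these local defects. Split the codim-$2$ faces $f$ of $\tau$ according to whether their valence $k_f$ is at most $K:=\lceil 4\pi/\alpha_n\rceil$ or larger. At a small-valence face the triangle inequality $|2\pi-k_f\alpha_n|\leq \sum_{\Delta\ni f}|\theta_\Delta(f)-\alpha_n|$ combined with the gap $\delta_n$ forces some incident simplex to satisfy $|\theta-\alpha_n|\geq\delta_n/K$; at a large-valence face the average dihedral angle $2\pi/k_f$ lies below $\alpha_n/2$, so by Markov's inequality a definite proportion of the incident simplices satisfy $|\theta-\alpha_n|\geq\alpha_n/4$. Combining both estimates with the identity $\sum_f k_f=\binom{n+1}{2}N$ produces a constant $c_n>0$ with
\[
\sum_f\sum_{\Delta\ni f}h_n\bigl(|\theta_\Delta(f)-\alpha_n|\bigr)\;\geq\;c_n\,N.
\]
Re-indexing the double sum over pairs (simplex, codim-$2$ face of that simplex) and applying the pointwise bound of the second paragraph gives $\binom{n+1}{2}\sum_\Delta(v_n-\Vol(\Delta))\geq c_n N$, so $d\cdot\Vol(M)\leq (1-c_n/(\binom{n+1}{2}v_n))\,N v_n$. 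Dividing by $d\,v_n$ and taking the infimum over coverings yields the theorem with $C_n:=1-c_n/(\binom{n+1}{2}v_n)<1$.

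The main obstacle is producing the modulus $h_n$: on the non-compact moduli space of straight $n$-simplices one must prove that near-maximal volume forces the shape to be close to the regular ideal simplex, and in particular forces every dihedral angle close to $\alpha_n$. This combines the Haagerup--Munkholm characterization of the volume maximizer with a compactness argument on a suitable completion of the moduli space in which the dihedral angles extend continuously to configurations having ideal vertices.
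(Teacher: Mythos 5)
Your argument hinges on the claim that ``at every codimension-$2$ face of $\tau$ the dihedral angles of the incident straight simplices must sum exactly to $2\pi$'', and this is precisely the point that fails. The triangulation $\tau$ is only a loose PL triangulation; after straightening, its simplices need not fit together into a geodesic triangulation of $\widetilde M$ at all. Straightened simplices can be degenerate (so their dihedral angles are not even defined), can be negatively oriented, and can overlap and wind around a codimension-$2$ face with arbitrary local degree, so the unsigned angle sum at such a face is not $2\pi$: for positive simplices it is $2\pi$ times the local degree of the straightening map, which can be arbitrarily large, and with flat or negative simplices mixed in there is no usable identity. (The paper exhibits exactly this phenomenon, see Figure~\ref{degree:fig} and Remark~\ref{rem1}: a face may be incident to arbitrarily many nearly regular ideal positive simplices and to no small simplex.) Both halves of your valence dichotomy then collapse: at ``small-valence'' faces the gap $\delta_n=\min_k|2\pi-k\alpha_n|$ gives nothing once the target angle sum may be $2\pi m$ with $m\geqslant 2$, and at ``large-valence'' faces the statement that the average angle is $2\pi/k_f<\alpha_n/2$ again presupposes total angle $2\pi$. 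Your argument is essentially the heuristic ``lucky case'' in which the straightening is a homeomorphism; the actual content of the theorem is the general case, and there the deficit $\sum_\Delta\bigl(v_n-\Vol(\Delta)\bigr)\geqslant c_nN$ simply cannot be extracted from an angle-sum constraint, because no such constraint holds.

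The paper gets around this by never invoking an angle balance. It first shows (Lemma~\ref{algvol}) that the positively oriented straightened simplices still cover $M$, since the straightening map has degree one. Then either at least a fixed fraction of the simplices are $\varepsilon_n$-small, in which case the volume deficit is immediate, or most simplices are $\varepsilon_n$-big; big simplices are geometrically close to the regular ideal simplex (Proposition~\ref{regular-converge}), so each one incident to an $(n-2)$-face occupies an angular fraction strictly larger than $(1+a_n)/(k_n+1)$ of a ball of fixed radius $\delta_n$ centered at the incenter of that straightened face, and these balls stay away from the non-incident facets (Lemma~\ref{palle}). Hence at any ``full'' face, i.e.\ one met by at least $k_n+1$ big simplices counted with multiplicity, the incident big simplices necessarily over-cover that ball, and the over-covering is quantified wasted volume; combined with the covering property this yields $\Vol(M)\leqslant C_nv_n|\Tt|$ whether the excess incidence comes from genuine valence or from winding. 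If you want to repair your proof you would have to replace the angle-sum identity by an overlap-counting mechanism of this kind (or handle local degrees and negative simplices in a signed angle identity, which your $\delta_n$ and Markov-type estimates do not do). Your final reduction from $\sigma$ to $\sigma_\infty$ via multiplicativity of $\|\cdot\|$ and the compactness ingredient ``near-maximal volume forces near-regular-ideal shape'' are fine and parallel to the paper.
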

A similar result holds if we replace stable complexity with stable integral
simplicial volume (see Section~\ref{prel} and Theorem~\ref{integral:pre} below). We mention for completeness the following converse inequality, proved below in Subsection \ref{converse:subsection}.

\begin{prop} \label{converse:prop} In every dimension $n\geqslant 2$ there is a constant $D_n>1$ such that $\sigma_\infty (M) \leqslant D_n\|M\|$ for every closed hyperbolic $n$-manifold $M$.
\end{prop}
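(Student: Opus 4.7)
The plan is to build, for each closed hyperbolic $n$-manifold $M$, a finite cover $\widetilde M\stackrel{d}{\to} M$ carrying a triangulation with at most $C_n\Vol(\widetilde M)$ simplices for a constant $C_n$ depending only on $n$. Combined with the Gromov--Thurston proportionality $\|M\|=\Vol(M)/v_n$ for closed hyperbolic $n$-manifolds, where $v_n$ is the volume of the regular ideal simplex in $\matH^n$, this gives
\[
\sigma_\infty(M)\;\leqslant\;\frac{\sigma(\widetilde M)}{d}\;\leqslant\;\frac{C_n\Vol(\widetilde M)}{d}\;=\;C_n\Vol(M)\;=\;C_n v_n\|M\|,
\]
so one may take $D_n=\max\{C_n v_n,\,2\}$, which is in particular $>1$.

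First I would pass to a cover of uniformly bounded local geometry. A closed hyperbolic $n$-manifold contains only finitely many closed geodesics of length below any given bound, and $\pi_1(M)$ is residually finite since it is linear (Malcev). Fixing a Margulis constant $\epsilon_n$ in dimension $n$, one can intersect finitely many finite-index normal subgroups of $\pi_1(M)$, one for each closed geodesic of $M$ of length at most $\epsilon_n$ and chosen so that this geodesic lifts to loops of length $\geqslant \epsilon_n$, to produce a finite cover $\widetilde M\to M$ of systole at least $\epsilon_n$. In particular, $\widetilde M$ has injectivity radius bounded below by a universal constant $r_n>0$.

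Second I would triangulate $\widetilde M$ with $O(\Vol(\widetilde M))$ simplices via discrete hyperbolic geometry. Choose a maximal $r_n/2$-separated net $\{x_1,\ldots,x_N\}\subset\widetilde M$. The embedded balls $B(x_i,r_n/4)$ are pairwise disjoint, so $N$ is at most $\Vol(\widetilde M)$ divided by the volume of a hyperbolic ball of radius $r_n/4$, yielding $N=O(\Vol(\widetilde M))$. The balls $B(x_i,r_n/2)$ cover $\widetilde M$ and each lifts isometrically to $\matH^n$, so a packing argument in $\matH^n$ bounds by a universal constant the number of net points within distance $r_n$ of any given $x_i$. The Voronoi decomposition associated to the net is therefore a cell decomposition of $\widetilde M$ with uniformly bounded combinatorics per cell, and taking a barycentric subdivision yields a triangulation in the loose sense used by the authors with at most $C_n N$ top-dimensional simplices.

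The main technical point is the last step: verifying that, at the scale $r_n/2$ (where every ball lifts isometrically to $\matH^n$), the Voronoi cells are genuine bounded convex polytopes with a universal combinatorial bound on their incidences, and that the subsequent subdivision into simplices can be carried out without destroying the linear-in-volume count. This is a piece of standard discrete hyperbolic geometry that one can either verify directly by working in the isometric lifts to $\matH^n$, or derive by invoking existing bounded-geometry triangulation results in the spirit of Cheeger--M\"uller--Schrader or Boissonnat--Dyer--Ghosh.
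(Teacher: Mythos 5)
Your proposal is correct and follows essentially the same route as the paper's proof: pass to a finite cover of large injectivity radius using residual finiteness, take a maximal separated net, observe that the associated Dirichlet/Voronoi cells number $O(\Vol(\widetilde M))$ and have uniformly bounded combinatorics by packing estimates, and then triangulate the cells compatibly. The paper resolves your "main technical point" exactly as you suggest, by fixing for each of the finitely many combinatorial types of cell a triangulation that is symmetric on every facet (morally your barycentric subdivision), so that the triangulations match across the tessellation.
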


Theorem \ref{4:teo} does not hold in dimension two and three. In dimension 2, it is easy to prove that $\sigma_\infty(S) = \|S\|$ for any closed hyperbolic surface $S$ (see Proposition~\ref{surface}). In dimension 3 we have the following.
\begin{teo} \label{3:teo}
There is a sequence $M_i$ of closed hyperbolic 3-manifolds such that 
$$\frac {\sigma_\infty (M_i)}{\|M_i\|} \to 1.$$
\end{teo}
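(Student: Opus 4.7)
The plan is to construct $M_i$ as closed hyperbolic Dehn fillings of the figure-eight knot complement $N$, and to bound $\sigma_\infty(M_i)$ using cyclic covers that exploit the efficient $2$-tetrahedron ideal triangulation $\tau_0$ of $N$. In the meridian/longitude basis $(\mu,\lambda)$ of $\partial N$, I would pick primitive slopes $\alpha_i = p_i\mu+q_i\lambda$ with $|p_i|+|q_i|\to\infty$, avoiding the finitely many exceptional slopes, and set $M_i:=N(\alpha_i)$. Thurston's hyperbolic Dehn surgery theorem together with Gromov's proportionality $\|M\|=\Vol(M)/v_3$ yields that each $M_i$ is closed hyperbolic and $\|M_i\|\to\|N\|=2$.

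The core $c_i$ of the filled solid torus $V_i\subset M_i$ generates $H_1(M_i)\cong\matZ/p_i$ (using $\gcd(p_i,q_i)=1$), so there is an associated $p_i$-fold cyclic cover $\tilde M_i\to M_i$ that decomposes as $\tilde M_i=\tilde N_i\cup\tilde V_i$, where $\tilde N_i\to N$ is the $p_i$-fold cyclic cover corresponding to $H_1(N)\cong\matZ\to\matZ/p_i$ and $\tilde V_i$ is a connected solid torus. The triangulation $\tau_0$ lifts to an ideal triangulation of $\tilde N_i$ using exactly $2p_i$ tetrahedra, and in the natural basis of $\pi_1(\partial\tilde V_i)$ inherited from the cover, the new meridian of $\tilde V_i$ has slope $(1,q_i)$, whose combinatorial complexity is independent of $p_i$.

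The goal is to extend the lifted triangulation of $\tilde N_i$ to a triangulation of $\tilde M_i$ by filling $\tilde V_i$ with a Jaco--Rubinstein-style layered triangulation using only $o(p_i)$ tetrahedra. This would give $\sigma(\tilde M_i)\leq 2p_i+o(p_i)$, and hence
$$\sigma_\infty(M_i)\;\leqslant\;\frac{\sigma(\tilde M_i)}{p_i}\;\leqslant\;2+o(1),$$
which, combined with $\|M_i\|\leqslant\sigma_\infty(M_i)$ and $\|M_i\|\to 2$, forces $\sigma_\infty(M_i)/\|M_i\|\to 1$.

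The delicate step is controlling the triangulation of $\tilde V_i$. The cusp cross-section of $\tilde N_i$ inherited from the lifted triangulation has $8p_i$ triangles, so a naive layered filling incurs $\Omega(p_i)$ additional tetrahedra just to collapse the boundary triangulation to a minimal one, which would ruin the desired bound. Overcoming this is the main obstacle: one needs either a carefully designed equivariant layered construction that exploits the $\matZ/p_i$-symmetry of the lifted boundary triangulation, or passage to a further tower of finite covers that amortizes the cost of boundary simplification per unit of covering degree, so that the overhead from filling becomes asymptotically negligible compared to the $2p_i$ tetrahedra coming from the ideal part.
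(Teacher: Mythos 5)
Your construction is the same as the paper's (closed hyperbolic Dehn fillings $M_i$ of the figure-eight knot complement $N$, with $\|M_i\|\to\|N\|=2$ by Thurston's Dehn surgery theorem and proportionality), and the lower bound $\|M_i\|\leqslant\sigma_\infty(M_i)$ is fine. The whole content of the theorem is therefore the upper bound $\sigma_\infty(M_i)\leqslant 2+o(1)$, and there your argument has a genuine gap that you yourself flag but do not close. Worse, the route you propose cannot close it as formulated: in the $p_i$-fold cyclic cover, the lifted ideal triangulation of $\tilde N_i$ induces on the cusp cross-section (equivalently, on $\partial\tilde V_i$) a triangulation with $8p_i$ triangles, and \emph{any} triangulation of the solid torus $\tilde V_i$ compatible with that boundary pattern must use at least $2p_i$ tetrahedra, since each boundary triangle is a free face of some tetrahedron and a tetrahedron has only four faces. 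The same count persists in any further tower of covers (the total cusp area, hence the number of boundary triangles, is proportional to the covering degree), so every triangulation of a cover that retains the lifted ideal tetrahedra costs a fixed positive proportion of extra tetrahedra, giving at best $\sigma_\infty(M_i)\leqslant 4+o(1)$, which does not suffice. No equivariant layered solid torus can evade this face count.

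The paper's resolution is to abandon triangulations at this step and work with simple spines, i.e.\ with Matveev's complexity $c$ and its stabilization $c_\infty$. A spine of the filled manifold is obtained from a minimal spine of $N$ (which has $c(N)=2$ vertices, dual to the two-tetrahedron ideal triangulation) by adding a \emph{single} meridian disc of the filled solid torus; in a degree-$hn$ cover in which each of the $h$ lifted solid tori winds $n$ times, the disc lifts to $n$ parallel discs in each, and one simply deletes $n-1$ of them, so the extra vertices contributed by the filling are $O(h)=O(d/n)$ rather than $O(d)$. Letting $n\to\infty$ gives $c_\infty(M_i)\leqslant c(N)=2$ (Proposition~\ref{filling:prop}), and Matveev's theorem $c=\sigma$ on closed irreducible $3$-manifolds with infinite fundamental group (Theorem~\ref{matveev:teo}) converts this back into the desired bound on $\sigma_\infty(M_i)$. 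The deletion of redundant discs is trivial for spines but has no direct analogue for triangulations; this detour through $c_\infty$ is precisely the missing idea, and the hard combinatorics you would otherwise need is absorbed into Matveev's $c=\sigma$ theorem.
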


The main difference between dimensions two, three, and higher depends on the fact that the regular ideal hyperbolic $n$-simplex $\Delta^n$ 
can tile $\matH^n$ only in dimensions 2 and 3. The key observation that we use to prove Theorem \ref{4:teo} is that the dihedral angle of $\Delta^n$ does not divide $2\pi$ when $n\geqslant 4$.

In dimension three Question \ref{main:quest} for hyperbolic
3-manifolds remains (as far as we know) open. Our ignorance on this point can be expressed as follows: we do not know any closed hyperbolic 3-manifold $M$ for which $\sigma_\infty(M) = \|M\|$, and we do not know any closed hyperbolic 3-manifold $M$ for which $\sigma_\infty(M)\neq \|M\|$. We refer the reader to Section~\ref{futuro:sec} for a brief discussion about some possible
approaches to, and reformulations of this problem.

We know however various non-hyperbolic 3-manifolds for which Question \ref{main:quest} has a positive answer. 

\begin{teo} \label{JSJ:teo}
Let $M$ be an irreducible manifold with infinite fundamental group, which decomposes along its JSJ decomposition into pieces, each homeomorphic to a Seifert manifold or a hyperbolic manifold commensurable with the figure-8 knot complement. Then
$$\sigma_\infty(M) = \|M\|.$$
\end{teo}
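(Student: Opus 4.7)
The inequality $\|M\|\leqslant\sigma_\infty(M)$ is Proposition~\ref{easy:prop}, so we focus on the reverse direction. By the well-known additivity of the simplicial volume under the JSJ decomposition (due to Gromov and Soma), $\|M\|=\sum_i\|M_i\|$, with Seifert pieces contributing zero and each hyperbolic piece contributing $\vol(M_i)/v_3$. The plan is to exhibit, for every $\vare>0$, a finite covering $\widetilde M\to M$ of some degree $d$ admitting a triangulation with at most $(\|M\|+\vare)\,d$ tetrahedra; this forces $\sigma_\infty(M)\leqslant\|M\|+\vare$ and hence the desired equality. The cover $\widetilde M$ will be assembled piecewise along the JSJ decomposition: select a finite cover of each JSJ piece admitting an efficient triangulation with a prescribed pattern on its toroidal boundary, then glue along matching boundary triangulations.

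For each hyperbolic piece $M_i$, commensurability with the figure-eight knot complement provides a common finite covering of $M_i$ and the figure-eight knot complement. Since the figure-eight complement carries an ideal triangulation by two regular ideal tetrahedra realizing $\|\textrm{fig-}8\|=2$, this common cover --- and hence any further cover thereof --- inherits an ideal triangulation by regular ideal tetrahedra whose number equals the simplicial volume of the cover. By truncating the cusps at horospherical tori and subdividing the resulting truncated ideal tetrahedra with care, one obtains a genuine triangulation of the compact cusped piece whose simplex count asymptotically approaches $\|M_i\|\,d$. For each Seifert piece $P$ we invoke a relative vanishing of stable complexity: some finite cover of $P$ is an $S^1$-bundle over a surface, and deep cyclic covers in the fiber direction admit triangulations whose simplex count grows sublinearly with the degree, even when a triangulation is prescribed on $\partial P$ (the match is realized within a bounded collar at negligible asymptotic cost).

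The main obstacle is the combinatorial compatibility along the JSJ tori. The covers of adjacent pieces must induce a common finite cover of each JSJ torus, and the boundary triangulations imposed from either side must match; this is arranged by carefully choosing compatible finite-index subgroups of the torus fundamental groups on each side, together with compatible triangulations of the resulting covers. A further subtlety is that the horospherical truncation of a hyperbolic piece produces more boundary triangles than a simplex-efficient triangulation of the torus would require, so reconciling the two sides either means absorbing the overhead into an adjacent Seifert piece or --- when two hyperbolic pieces are adjacent --- inserting a thin collar $T^2\times I$ that is itself cheaply triangulated. Controlling the cumulative cost of these collars and truncations so that it is genuinely $o(d)$ rather than an uncontrolled $O(d)$ is the delicate point of the argument, and it is here that the special tiling of the figure-eight commensurability class by regular ideal tetrahedra is essential.
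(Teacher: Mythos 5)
Your overall skeleton (decompose along the JSJ, handle each piece in a cover, glue characteristic coverings of the pieces) matches the paper's, but the decision to carry out the whole construction at the level of \emph{triangulations} creates two quantitative gaps that I do not see how to close. First, the hyperbolic pieces: in an ideal triangulation every tetrahedron has all four vertices at the cusps, so truncating at horospherical tori replaces \emph{each} of the $\sim\|M_i\|\,d$ ideal tetrahedra by a truncated tetrahedron, which must then be subdivided into several genuine tetrahedra. The cost is a multiplicative constant $C>1$ on the whole count, i.e.\ $\Theta(d)$ and not $o(d)$; the resulting bound is $\sigma_\infty(M)\leqslant C\|M\|$, not $\|M\|+\vare$. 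Second, the boundary matching: the truncated ideal triangulation induces on each JSJ torus of the cover a triangulation with $\Theta(d)$ triangles, so any adjacent piece or collar $T^2\times I$ carrying that boundary pattern must itself contain $\Theta(d)$ tetrahedra. Your claim that a Seifert piece with a \emph{prescribed} boundary triangulation can be triangulated sublinearly in $d$ is therefore false, and the "thin collar at negligible cost" between two hyperbolic pieces fails for the same reason. You correctly flag this as the delicate point, but it is not merely delicate --- it is an obstruction intrinsic to triangulations.

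The paper's proof avoids both problems by never gluing triangulations: it introduces the stable complexity $c_\infty$ built from Matveev's spine complexity $c$, proves $c_\infty$ is additive on the JSJ decomposition (Lemma~\ref{tori:lemma} and Proposition~\ref{glue:prop}, using Hamilton's characteristic coverings), and only at the very end converts back to triangulations via Matveev's theorem that $c=\sigma$ for closed irreducible $3$-manifolds with infinite fundamental group (Theorem~\ref{matveev:teo}). Spines resolve exactly your two difficulties: for a cusped hyperbolic piece $c$ is computed by \emph{ideal} triangulations, so no truncation cost is ever paid; and a spine of a degree-$n^2$ characteristic cover of a JSJ torus can be simplified to one with a bounded number of vertices, so the gluing contributes only $O(d/n)=o(d)$ vertices rather than the $\Theta(d)$ boundary simplices forced on a triangulation. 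If you want to salvage a direct triangulation argument, you would essentially have to reprove the spine-to-triangulation conversion, which is the deep content of Matveev's theorem; as written, your proposal is missing that idea and the stated estimates for the hyperbolic truncation and the Seifert/collar absorption are incorrect.
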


In particular if $M$ is a graph manifold with infinite fundamental group we have $\sigma_\infty (M) = \|M\| = 0$. If $M=DN$ is the double of the complement $N$ of the figure-eight knot we have $\sigma_\infty (M) = \|M\| = 2\|N\| = 4$
(the simplicial volume of bounded manifolds is defined in Section~\ref{prel}).

It is absolutely necessary to restrict ourselves to manifolds with infinite fundamental group, since a manifold $M$ with finite fundamental group has only finitely many coverings and hence $\sigma_\infty(M)>0$, whereas $\|M\|=0$ (since
the simplicial volume is a characteristic number
and vanishes on simply connected manifolds~\cite{Gro, Ivanov}).

To prove Theorem \ref{JSJ:teo} we slightly modify the definition of $\sigma_\infty$ by using \emph{spines} instead of \emph{triangulations} in the spirit of Matveev complexity \cite{Mat}. The resulting invariant, which we denote by $c_\infty (M)$, is another characteristic number defined for any compact manifold $M$ of any dimension, possibly with boundary. When $M$ is an irreducible 3-manifold with infinite fundamental group we get $c_\infty(M) = \sigma_\infty(M)$. On more general 3-manifolds we have $\|M\| \leqslant c_\infty (M) \leqslant \sigma_\infty (M)$ and $c_\infty$ has a better behaviour than $\sigma_\infty$. For instance, we get $c_\infty(M)=0$ on any 3-manifold $M$ with finite fundamental group (in contrast with $\sigma_\infty$) and we can prove the following.

\begin{teo} \label{additive:teo}
The invariant $c_\infty$ is additive on connected sums and on the pieces of the JSJ decomposition.
\end{teo}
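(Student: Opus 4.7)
My plan is to prove each additivity statement as a pair of inequalities $c_\infty(M)\geqslant \sum_i c_\infty(M_i)$ and $c_\infty(M)\leqslant \sum_i c_\infty(M_i)$, leaning on the (Matveev--style) additivity of the non-stabilised complexity $c$ on connected sums of irreducible pieces and on JSJ pieces, which is established classically. The lower bound is a soft averaging argument, and the two upper bounds are explicit constructions of efficient covers by gluing copies of efficient covers of the pieces.

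\smallskip

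For the lower bound, fix a finite cover $p\colon \widetilde M\to M$ of degree $d$. The preimage under $p$ of the system of essential $2$-spheres (for connected sum) or of the JSJ tori cuts $\widetilde M$ into pieces $\widetilde N_1,\dots,\widetilde N_N$, each of which is a finite cover, of some degree $d_j$, of some piece $M_{i(j)}$. Additivity of $c$ on this decomposition gives
$$
\frac{c(\widetilde M)}{d}=\sum_j \frac{c(\widetilde N_j)}{d}=\sum_i\,\sum_{j:\,i(j)=i}\frac{d_j}{d}\cdot\frac{c(\widetilde N_j)}{d_j}.
$$
For each fixed $i$ the weights $d_j/d$ sum to $1$, so the inner convex combination is bounded below by $c_\infty(M_i)$; taking the infimum over $\widetilde M$ yields $c_\infty(M)\geqslant \sum_i c_\infty(M_i)$.

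\smallskip

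For the upper bound in the connected-sum case $M=M_1\#M_2$, given $\varepsilon>0$ I pick covers $\widetilde{M_i}\to M_i$ of degree $d_i$ with $c(\widetilde{M_i})/d_i<c_\infty(M_i)+\varepsilon$. I cut each $\widetilde{M_i}$ along the $d_i$ lifted spheres, take $d_2$ copies of $\widetilde{M_1}^{\circ}$ and $d_1$ copies of $\widetilde{M_2}^{\circ}$, and glue their $d_1 d_2$ boundary spheres by any bijection producing a connected result $\widetilde N$ (such a bijection exists by a spanning-tree argument). Then $\widetilde N$ is a connected cover of $M_1\#M_2$ of degree $d_1 d_2$, and additivity of $c$ on connected sums gives
$$
\frac{c(\widetilde N)}{d_1 d_2}=\frac{d_2\,c(\widetilde{M_1})+d_1\,c(\widetilde{M_2})}{d_1 d_2}=\frac{c(\widetilde{M_1})}{d_1}+\frac{c(\widetilde{M_2})}{d_2}<c_\infty(M_1)+c_\infty(M_2)+2\varepsilon.
$$
Letting $\varepsilon\to 0$ closes the inequality; the case of arbitrarily many summands is identical.

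\smallskip

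The JSJ upper bound follows the same recipe with tori in place of spheres, and this is where I expect the main difficulty. After picking efficient covers $\widetilde{M_i}\to M_i$, the boundary tori of $\widetilde{M_i}$ on the two sides of a JSJ torus $T\subset M$ will in general realise \emph{different} sublattices of $\pi_1(T)\cong\matZ^2$, so one cannot glue directly. I would handle this by fixing, for every JSJ torus $T$, a common sublattice contained in all relevant ones, and then replacing each $\widetilde{M_i}$ by a deeper cover $\widetilde{\widetilde{M_i}}$ in which every boundary torus above $T$ realises that common sublattice; existence of such covers relies on a separability property for peripheral subgroups which holds in Seifert and hyperbolic pieces. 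Submultiplicativity $c(\widetilde{\widetilde{M_i}})\leqslant e_i\,c(\widetilde{M_i})$ under the intermediate cover of degree $e_i$ guarantees that the normalised ratio does not grow, so taking the appropriate number of copies of each $\widetilde{\widetilde{M_i}}$ to balance degrees and gluing along matched boundary tori yields a connected cover $\widetilde N\to M$ of some degree $d$ with $c(\widetilde N)/d\leqslant \sum_i c_\infty(M_i)+\varepsilon$. Verifying that the matching and the connectedness of $\widetilde N$ can always be arranged, and controlling the complexity increase when passing to the deeper covers, is the main technical point; everything else is bookkeeping.
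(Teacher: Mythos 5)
Your connected-sum argument and both lower bounds are essentially sound, with one caveat: in the JSJ lower bound you invoke ``additivity of $c$ on this decomposition'' to write $c(\widetilde M)=\sum_j c(\widetilde N_j)$. The complexity $c$ is \emph{not} additive on JSJ decompositions (there are only finitely many irreducible $3$-manifolds of each complexity, while infinitely many manifolds share the same JSJ blocks), so this equality is false. What is true, and all your averaging argument needs, is Matveev's monotonicity $c(\widetilde M/\!/\widetilde T)\leqslant c(\widetilde M)$ for the incompressible preimage $\widetilde T$ of the JSJ tori, which yields $c(\widetilde M)\geqslant \sum_j c(\widetilde N_j)$; with that correction the lower bound goes through. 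The connected-sum upper bound is fine, since there additivity of $c$ is a genuine theorem of Matveev (and the extra $S^2\times S^1$ summands created by a non-tree gluing pattern have $c=0$).

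The genuine gap is in the JSJ upper bound, and it is not where you locate it. The lattice-matching issue is real but is handled by known results of Hempel and Hamilton producing covers of each hyperbolic or Seifert piece that are $n$-characteristic on \emph{all} boundary tori simultaneously for a common $n$; note that your ``common sublattice'' must be taken characteristic, since a piece may have several boundary tori and the gluing isomorphisms between paired tori are arbitrary. The step you dismiss as bookkeeping is the actual heart of the proof: you assert $c(\widetilde N)\leqslant \sum (\text{copies})\cdot c(\widetilde{\widetilde{M_i}})$ for the glued cover, i.e.\ subadditivity of $c$ under gluing along tori. This is exactly what fails: the union of spines of the pieces is not a spine of $\widetilde N$, because its complement still contains a product $T\times(-1,1)$ over each gluing torus. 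One must insert a spine $Y\times(-1,1)$ of each such product, and the new vertices created where $Y$ meets the cell structure induced on $T$ by the adjacent spines form an error term with no a priori bound in terms of the complexities of the pieces. The paper's Lemma~\ref{tori:lemma} is devoted precisely to this point: in an $n$-characteristic cover the degree over each torus is $n^2$, and the lifted torus spine can be simplified so that the number of extra vertices grows only linearly in $n$, hence becomes negligible after dividing by the degree. Without this estimate (or a substitute for it) your upper bound does not close, so the proposal as written is incomplete.
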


Note that Gromov norm is also additive on connected sums and on the pieces of the JSJ decomposition~\cite{Soma}. To deduce Theorem \ref{JSJ:teo} from Theorem \ref{additive:teo} it suffices to check that $c_\infty(M) = \|M\|$ when $M$ is an $S^1$-bundle over a surface or the complement of the figure-eight knot, two special cases that are easy to deal with.

\subsection{Structure of the paper} 
We introduce in Section \ref{prel} the simplicial volume, stable integral volume, and stable $\Delta$-complexity, and prove some basic properties. Section \ref{higher} is devoted to dimension $n\geqslant 4$ and hence to the proof of Theorem \ref{4:teo}. In Section \ref{complexity:section} we introduce the stable complexity $c_\infty$ and 
in Section \ref{Three:section} we turn to 3-manifolds, thus proving Theorems \ref{3:teo}, \ref{JSJ:teo}, and \ref{additive:teo}. Section \ref{futuro:sec} contains some concluding remarks and open questions.

\subsection{Acknowledgements} We thank Clara L\"oh and Juan Souto for useful conversations. 
\section{Preliminaries}\label{prel}
We introduce in this section three characteristic numbers: the well-known \emph{simplicial volume} introduced by Gromov \cite{Gro}, a less-known variation which uses integral homology instead of real homology which we call \emph{stable integral volume}, and the \emph{stable $\Delta$-complexity}, first introduced by Milnor and Thurston in \cite{MiThu} and studied in this paper. A further characteristic number called \emph{stable complexity} uses spines instead of triangulations and is introduced in Section \ref{complexity:section}.

\subsection{Simplicial volume}
Let $M$ be a compact connected oriented $n$-manifold (possibly with boundary), and let $[M,\partial M]^\matZ$ be the integral fundamental class of $M$, \emph{i.e.}~the generator of $H_n(M,\partial M;\matZ)\cong\matZ$ corresponding to the orientation of $M$. 
The inclusion $\matZ\hookrightarrow \matR$ induces
a map $H_n(M,\partial M;\matZ)\to H_n(M,\partial M;\matR)$ which sends
$[M,\partial M]^\matZ$ into the real fundamental class $[M,\partial M]\in H_n(M,\partial M;\matR)$
of $M$.
Following Gromov~\cite{Gro}, we define the \emph{simplicial volume} $\|M\|$
and the \emph{integral simplicial volume} $\|M\|^\matZ$ of $M$ as follows:
\begin{align*}
\|M\| &= \inf\left\{
\sum_{i=1}^k |\lambda_i|\, , \ \left[ \sum_{i=1}^k \lambda_i\sigma_i\right]=[M,\partial M]\,\in\, H_n(M,\partial M;\matR)\,\right\}\ \in\ \matR,\\
\|M\|^\matZ &= \inf\left\{
\sum_{i=1}^k |\lambda_i|\, , \ \left[ \sum_{i=1}^k \lambda_i\sigma_i\right]=[M,\partial M]^\matZ\,\in\, H_n(M,\partial M;\matZ)\,\right\}\ \in\ \matZ .
\end{align*}
The (integral) simplicial volume does not depend on the orientation of $M$ and the 
(integral) simplicial volume of a nonorientable manifold is defined as half the volume of its orientable double covering (hence the integral version may be a half-integer). Moreover, the (integral) simplicial volume of a disconnected manifold is the sum of the simplicial volumes of its components.

As mentioned above, the simplicial volume is a characteristic number, 
\emph{i.e.}~it is multiplicative under finite coverings~\cite{Gro}. On the contrary,
the integral simplicial volume is only submultiplicative: every 
characteristic number vanishes on manifolds that admit finite non-trivial self-coverings,
\emph{e.g.}~on $S^1$, 
while $\| M\|^\matZ\geq 1$ for every closed orientable manifold.
We may therefore
define the \emph{stable} integral simplicial volume $\| M\|^\matZ_\infty$ as follows:
$$\| M\|_\infty^\matZ = \inf_{\widetilde M \stackrel d\to M} \left\{\frac{\| \widetilde M\|^\matZ}d\right\}\ .$$

As observed in Proposition~\ref{last:prop},
the stable integral simplicial volume bounds from above (up to a constant
depending only on the dimension)
the Euler characteristic, so it can be exploited to study Gromov's Question~\ref{gromov:conj}. However, in Section~\ref{futuro:sec}
we will prove the following analog of Theorem~\ref{4:teo}:
\begin{teo}\label{integral:pre}
For every $n\geqslant 4$ there exists a constant $C_n<1$ such that the following holds.
Let $M$ be a closed orientable hyperbolic manifold of dimension $n\geqslant 4$.
Then 
$$
\|M\| \leqslant C_n \|M\|^\matZ_\infty.
$$
\end{teo}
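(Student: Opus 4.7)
The plan is to mimic the proof of Theorem \ref{4:teo}, with integral fundamental cycles playing the role of triangulations. The reason this should work is that both invariants $\sigma_\infty$ and $\|\cdot\|^\matZ_\infty$ are controlled by the same geometric mechanism: after straightening, each simplex has volume at most $v_n$ (the volume of the regular ideal $n$-simplex), and this bound is nearly sharp only on simplices close to the regular ideal one. Concretely, let $\widetilde M\stackrel d\to M$ be a finite cover and let $z=\sum_i n_i\sigma_i$ be an integral fundamental cycle of $\widetilde M$ nearly achieving $\|\widetilde M\|^\matZ$. Straightening gives a new representative $\str(z)=\sum_i n_i\str(\sigma_i)$ of the fundamental class, and integrating the hyperbolic volume form yields
$$\Vol(\widetilde M)=\sum_i n_i\algvol(\str(\sigma_i))\leqslant v_n\sum_i |n_i|,$$
which recovers $\|\widetilde M\|\leqslant\|\widetilde M\|^\matZ$; the goal is to improve this by a factor $C_n<1$ independent of $M$.

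The two technical ingredients from the proof of Theorem \ref{4:teo} carry over. First, a quantitative rigidity statement: for every $\eta>0$ there exists $\delta>0$ such that any straight $n$-simplex with $|\algvol|\geqslant v_n-\delta$ lies $\eta$-close (in a suitable metric) to a regular ideal simplex, and hence has all its dihedral angles within $\eta$ of $\alpha_n:=\arccos(1/n)$. Second, the key arithmetic fact: for $n\geqslant 4$ the angle $\alpha_n$ does not divide $2\pi$, so there exists $\gamma_n>0$ such that $|k\alpha_n-2\pi|\geqslant\gamma_n$ for every positive integer $k$. Combining these, a simplex close to being regular ideal cannot participate in any ``closed-up'' angular configuration around an $(n-2)$-face unless there is an angular defect of size at least $\gamma_n-O(\eta)$.

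The heart of the proof, and the main obstacle, is to deploy this angular obstruction for an arbitrary integral singular cycle rather than a genuine triangulation. In a triangulation every $(n-2)$-simplex has a well-defined combinatorial link of simplices whose dihedral angles sum to $2\pi$, but in a generic singular cycle $z$ the straightened simplices can overlap in complicated, non-combinatorial ways. The plan is to treat $\str(z)$ as a geometric integral current in $\widetilde M$ and, at a generic point $p\in\widetilde M$ lying in the codimension-two singular stratum of this current, look at the collection of simplices containing $p$ weighted by their $n_i$. The cycle condition $\partial z=0$ forces the signed sum of dihedral angles at $p$ to equal $2\pi$; if a definite proportion of the $\ell^1$-mass $\sum|n_i|$ were carried by positively oriented simplices with $|\algvol|\geqslant v_n-\delta$, then on a set of $(n-2)$-strata of positive measure the total angle would be pinned within $O(\eta)$ of some integer multiple of $\alpha_n$, contradicting the arithmetic fact once $\eta$ is chosen small compared to $\gamma_n$. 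Hence a definite fraction of $\sum|n_i|$ must come from simplices with volume $\leqslant v_n-\delta$, producing a linear deficit $\Vol(\widetilde M)\leqslant C_n v_n\sum|n_i|$ for some $C_n<1$ depending only on $n$. Dividing by $d$ and taking the infimum over all finite covers gives $\|M\|\leqslant C_n\|M\|^\matZ_\infty$. The delicate point I expect to wrestle with is making the ``generic codimension-two point'' argument rigorous for straightened singular cycles, possibly by first regularizing $\str(z)$ into a geometric pseudo-triangulation of comparable $\ell^1$-norm and then invoking the triangulation argument verbatim.
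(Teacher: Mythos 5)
Your high-level strategy (adapt the proof of Theorem~\ref{4:teo}, using the angular obstruction for $n\geqslant 4$) is the right one, but the mechanism you propose for deploying that obstruction does not work as stated, and the paper's proof hinges on a combinatorial step you only gesture at in your last sentence. The claim that ``the cycle condition $\partial z=0$ forces the signed sum of dihedral angles at $p$ to equal $2\pi$'' at a generic point $p$ of the codimension-two stratum of the current $\str(z)$ fails on two counts. First, the simplices of a singular integral cycle passing through a given point of $\widetilde M$ need not share a geometric $(n-2)$-face there at all: the cancellation in $\partial z=0$ is a cancellation of singular $(n-1)$-faces as maps, and after straightening the $(n-2)$-faces of distinct simplices are in general pairwise distinct subsets of $\widetilde M$, so there is no well-defined link at a geometric point and no angle identity to exploit. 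Second, even around a genuine combinatorial codimension-two face the straightened dihedral angles sum to $2\pi$ times the local degree of the straightening map, which can be $0$ or $\geqslant 2$ --- exactly the phenomenon of Figure~\ref{degree:fig} and Remark~\ref{rem1}. The argument of Section~\ref{higher} is built precisely to cope with this: non-full faces must be incident to an $\vare_n$-small simplex, while full faces force overlapping (hence a volume loss in the balls forming $M_1$); one cannot simply ``contradict'' the arithmetic fact, one must extract a quantitative volume deficit in both regimes.

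The missing idea, which is the actual content of the paper's proof, is to recover the combinatorial structure \emph{before} straightening: write $z=\sum_{i=1}^m \epsilon_i\sigma_i$ with $\epsilon_i=\pm1$ (splitting each coefficient $n_i$ into $|n_i|$ unit copies, so $m=\|M\|^{\matZ}$), use the cancellation in $\partial z=0$ to choose a complete pairing of the $(n-1)$-faces of $m$ disjoint standard simplices compatible with the maps $\sigma_i$, and thereby build a closed pseudomanifold $X$ carrying a loose triangulation $\Tt$ and a map $g\colon X\to M$ pushing the fundamental cycle of $\Tt$ onto $z$. All of Subsection~\ref{proof:sub} (straightening, positive simplices covering $M$, incenters, full faces, the decomposition into $M_1$, $M_2$, $M_3$) then applies verbatim to $(X,\Tt,g)$ and yields $\vol(M)\leqslant C_n v_n m$; stabilizing over finite covers is immediate since $\|\cdot\|$ is a characteristic number. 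Your proposed ``regularization of $\str(z)$ into a geometric pseudo-triangulation'' points toward this, but the construction must be carried out combinatorially on the cycle, not geometrically on the straightened current; as written, the generic-point argument cannot be made rigorous. (A minor slip: $\alpha_n=\arccos\frac{1}{n-1}$, not $\arccos\frac{1}{n}$.)
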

It is folklore that the simplicial volume of a manifold is equal to the
seminorm of its \emph{rational} fundamental class (see~\cite{BFP} for a
complete proof). As a consequence, 
integral cycles may be used to approximate
the simplicial volume via the following equality, which holds for every compact
orientable $n$-manifold $M$:
$$
\| M\|=\inf\left\{
\frac{\sum_{i=1}^k |\lambda_i|}{|h|}\, , \ \left[ \sum_{i=1}^k \lambda_i\sigma_i\right]=h\cdot [M,\partial M]^\matZ\,\in\, H_n(M,\partial M;\matZ),\, h\in\matZ\setminus\{0\}\right\}.
$$
Note however that this equality does not seem to be useful in order to attack Gromov's Question~\ref{gromov:conj}. 

\subsection{Stable $\Delta$-complexity}\label{easy:sub} We work in the PL category, so every manifold in this paper will be tacitly assumed to have a piecewise-linear structure.
As mentioned in the introduction, a \emph{(loose) triangulation} of a closed $n$-dimensional manifold $M$ is the realization of $M$ as the glueing of finitely many $n$-simplices via some simplicial pairing of their facets. The \emph{$\Delta$-complexity} $\sigma(M)$ of $M$ is the minimal number of simplices needed to triangulate $M$. The \emph{stable $\Delta$-complexity} of $M$ is then
$$\sigma_\infty (M) = \inf_{\widetilde M \stackrel d\to M} \left\{\frac{\sigma(\widetilde M)}d\right\}.$$
We can easily establish the inequalities
\begin{equation}\label{easy:ineq}
 \| M\|\leqslant \sigma_\infty (M)\leqslant \sigma (M)
\end{equation}
stated in Proposition~\ref{easy:prop}. The assertion $\sigma_\infty (M)\leqslant \sigma (M)$
follows from the definitions. In order to prove the other inequality we may suppose
that $M$ is oriented. Let $\Tt$ be a triangulation of $M$ with $m=\sigma (M)$ simplices, 
and let  $s_1,\ldots,s_m$ 
be suitably chosen orientation-preserving parameterizations of
the simplices of $\Tt$. We would like to say that $s_1+\ldots + s_m$ represents the fundamental class in $H_n(M;\matZ)$, however this singular chain is not necessarily a cycle. We can fix this problem easily by averaging each $s_i$ on all its permutations. 
That is, we define for any simplex $s$ the chain
$$
\alt(s)=\frac{1}{(n+1)!}\sum_{\tau\in \mathfrak{S}_{n+1}} (-1)^{{\rm sgn}(\tau)}s\circ \overline{\tau},
$$
where $\overline{\tau}$ is the unique affine diffeomorphism of the
standard $n$-simplex $\Delta_n$ corresponding to the permutation $\tau$ of the vertices of $\Delta_n$. Now it is immediate to verify that the chain
$z=\alt(s_1)+\ldots+\alt(s_m)$ is a cycle which represents the fundamental class of $M$.
Moreover, the sum of the absolute values of the coefficients of $z$ is at most $m$,
and this implies the inequality $\| M\|\leqslant \sigma (M)$. The fact that $\| M\|\leqslant\sigma_\infty (M)$ now follows from
the fact that the simplicial volume is multiplicative under finite coverings.

It is also easy to prove a stronger version of Proposition \ref{chi:prop}. 
\begin{prop} \label{chi2:prop}
Let $M$ be a closed $n$-dimensional manifold. We have
$$\big|\chi(M)\big|\leqslant 2^{n+1}\sigma_\infty (M).$$
\end{prop}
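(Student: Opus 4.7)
The plan is to combine the multiplicativity of $\chi$ under finite coverings with a crude face-counting bound relating $|\chi|$ to the number of top-dimensional simplices in a loose triangulation.

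First I would recall that the Euler characteristic is multiplicative: for any degree-$d$ covering $\widetilde M \stackrel{d}{\to} M$, we have $\chi(\widetilde M) = d\cdot \chi(M)$, hence $|\chi(M)| = |\chi(\widetilde M)|/d$. Thus, if I can establish the inequality $|\chi(N)|\leqslant 2^{n+1}\sigma(N)$ for every closed $n$-manifold $N$, then applying it to $\widetilde M$ and passing to the infimum over coverings immediately yields $|\chi(M)|\leqslant 2^{n+1}\sigma_\infty(M)$.

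To prove the unstable bound, let $\Tt$ be a loose triangulation of $N$ with $k=\sigma(N)$ top-dimensional simplices, and let $f_i$ denote the number of $i$-dimensional faces of $\Tt$. A loose triangulation equips $N$ with a CW structure whose cells are the faces of $\Tt$, so $\chi(N)=\sum_{i=0}^{n}(-1)^i f_i$. Since each $n$-simplex contributes $\binom{n+1}{i+1}$ faces of dimension $i$, and since the facet identifications of a loose triangulation only glue such faces together (never split them), we have the bound
$$
f_i \leqslant k\binom{n+1}{i+1}.
$$
Therefore
$$
|\chi(N)| \leqslant \sum_{i=0}^n f_i \leqslant k\sum_{i=0}^n \binom{n+1}{i+1} = k\bigl(2^{n+1}-1\bigr) < 2^{n+1}\,k,
$$
which is the desired inequality.

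The only subtle point — and what I expect to be the main thing to check carefully — is the claim $f_i\leqslant k\binom{n+1}{i+1}$ in the loose setting: one must verify that allowing a simplex to be glued to itself along its facets cannot increase the count of $i$-faces relative to the naïve sum over top-dimensional simplices. This is immediate once one observes that $f_i$ equals the number of equivalence classes of $i$-faces of the disjoint union of the $n$-simplices under the gluing relation, and taking a quotient can only decrease cardinality. Everything else is a one-line assembly of the multiplicativity of $\chi$ with this combinatorial bound.
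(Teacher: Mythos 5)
Your proof is correct and follows essentially the same route as the paper: both establish the unstable bound $|\chi(N)|\leqslant 2^{n+1}\sigma(N)$ by counting the cells of the $\Delta$-complex structure induced by a loose triangulation (each $n$-simplex contributing at most $2^{n+1}-1$ faces) and then stabilize using the multiplicativity of $\chi$ under finite coverings. Your extra care with the face count in the loose setting is a welcome refinement of a point the paper passes over quickly.
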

\begin{proof}
A triangulation $\Tt$ of $M$ endows $M$ with a cellular structure with at most $2^{n+1}\cdot t$ cells, where $t$ is the number of the simplices of $\Tt$. 
Since the Euler characteristic $\chi(M)$ can be computed
as the alternating sum of the number of simplices in a triangulation of $M$, 
this readily implies that $|\chi(M)|\leqslant 2^{n+1}\sigma (M)$ for every $n$-manifold $M$. Since  $\chi$ is a characteristic number, also the stronger inequality $|\chi(M)|\leqslant 2^{n+1}\sigma_\infty (M)$ holds.  
\end{proof}

\subsection{Surfaces}
In the two-dimensional case 
the answer to Question~\ref{main:quest} is well-known. In fact, considering triangulations
of finite coverings is the standard way to compute the (upper bound for) the simplicial
volume of surfaces of negative Euler characteristic: 
\begin{prop}\label{surface}
Let $S$ be a closed compact surface. If $S=S^2$ (resp.~$S=\matR\matP^2$)
then $\sigma_\infty (S)=2$ (resp.~$\sigma_\infty (S)=1$) and $\| S\|=0$. Otherwise we have
$$
\sigma_\infty (S)=\| S\| = 2 | \chi(S) |.
$$
\end{prop}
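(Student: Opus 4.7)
The lower bound $\sigma_\infty(S)\geqslant \|S\|$ is already supplied by Proposition~\ref{easy:prop}, so the plan is to produce matching upper bounds, splitting into three cases: $S^2$, $\matRP^2$, and the remaining closed surfaces (which all satisfy $\chi(S)\leqslant 0$).

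For the two exceptional surfaces I would argue directly. Since $S^2$ is simply connected it admits no nontrivial finite cover, so $\sigma_\infty(S^2)=\sigma(S^2)$; glueing two triangles along their three boundary edges realises $S^2$, while a single triangle has an odd number of free edges and therefore cannot yield a closed surface, so $\sigma(S^2)=2$. The projective plane has only one nontrivial cover, namely $S^2$, hence
$$\sigma_\infty(\matRP^2)=\min\bigl\{\sigma(\matRP^2),\sigma(S^2)/2\bigr\}=\min\{\sigma(\matRP^2),1\}=1.$$

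The core of the argument is the case $\chi(S)\leqslant 0$. First I would establish the combinatorial fact that every closed surface $\Sigma$ of non-positive Euler characteristic carries a loose triangulation with a single vertex and exactly $2-2\chi(\Sigma)$ triangles. This is obtained by fan-triangulating the standard polygonal presentation of $\Sigma$ (the $4g$-gon in the orientable case, the $2g$-gon in the non-orientable case), whose boundary identifications collapse all vertices to one; the count is forced by Euler's relation $V-E+F=\chi$ together with $V=1$ and $3F=2E$. Granting this, I would choose finite covers $\widetilde S\to S$ of arbitrarily large degree $d$, which exist by residual finiteness of surface groups (when $\chi(S)=0$ one may simply use self-coverings of $T^2$ or of the Klein bottle). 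Each $\widetilde S$ is a surface with $\chi(\widetilde S)=d\,\chi(S)$, so by the one-vertex lemma it admits a loose triangulation with $2-2d\chi(S)$ triangles, and dividing by $d$ gives
$$
\frac{\sigma(\widetilde S)}{d}\;\leqslant\;\frac{2}{d}+2|\chi(S)|\;\longrightarrow\;2|\chi(S)|\qquad\text{as }d\to\infty,
$$
hence $\sigma_\infty(S)\leqslant 2|\chi(S)|=\|S\|$, matching the lower bound.

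There is no serious obstacle in this plan: the only step requiring any real attention is the production of the one-vertex loose triangulation, and this is a standard polygonal construction once one accepts the loose notion of triangulation adopted throughout the paper. Combined with Proposition~\ref{easy:prop}, the three cases together give $\sigma_\infty(S)=\|S\|=2|\chi(S)|$ for every closed surface other than $S^2$ and $\matRP^2$, completing the proof.
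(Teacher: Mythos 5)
Your proposal is correct and follows essentially the route the paper takes: the paper handles $S^2$ and $\matRP^2$ exactly as you do and, for $\chi(S)\leqslant 0$, simply cites the well-known computation via triangulated finite covers, which is precisely the one-vertex-triangulation argument you spell out (the count $2-2\chi$ and the $O(1/d)$ loss under degree-$d$ covers are both right). Two small points should be made explicit. First, the statement also asserts $\|S^2\|=\|\matRP^2\|=0$, which you never address; it follows since $S^2$ admits self-maps of degree greater than one and the simplicial volume is a characteristic number. Second, for $\chi(S)<0$ your argument only yields $\|S\|\leqslant \sigma_\infty(S)\leqslant 2|\chi(S)|$; to conclude that all three quantities coincide you must invoke the lower bound $\|S\|\geqslant 2|\chi(S)|$, i.e.\ the Gromov--Thurston computation $\|S\|=\mathrm{Area}(S)/v_2$ of Theorem~\ref{prop:teo} (for $\chi(S)=0$ the upper bound alone suffices, since everything is nonnegative). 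With those two standard facts quoted, the proof is complete.
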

\begin{proof}
Of course we have $\sigma (S^2)=2$, so $\sigma_\infty(S^2)=2$ and
$\sigma_\infty (\matR\matP^2)=1$. Moreover, since $S^2$ admits a self-map
of degree bigger than one we have $\|S^2\|=0$, whence $\|\matR\matP^2\|=0$ because
the simplicial volume is a characteristic number.

Let us now suppose that $\chi(S)\leqslant 0$.
Of course, it is sufficient to consider the case when $S$ is orientable. Then,
the equality  
$
\sigma_\infty (S)=\| S\| = 2 | \chi(S) |
$
is well-known (see \emph{e.g.}~\cite{BePe}).
\end{proof}

\subsection{The simplicial volume of hyperbolic manifolds}
The simplicial volume of a manifold is deeply related to several geometric
properties of the Riemannian structures that the manifold can support. Concerning hyperbolic manifolds, the following result due to Gromov and Thurston shows that
the simplicial volume is proportional to the Riemannian volume
(for a complete proof we refer the reader  to~\cite[Theorem C.4.2]{BePe}, \cite[Theorem 11.6.3]{Ratcliffe} 
or~\cite{Bucher} for the closed
case, and \cite{Francaviglia1}, \cite{FriPag} or \cite{FM} for the cusped case).
Let $M$ be a complete finite-volume hyperbolic $n$-manifold. If $M$ is non-compact,
then it admits a natural compactification $\overline{M}$ such that
$\overline{M}$ is a manifold with boundary and $\partial \overline{M}$ is a finite
collection of closed $(n-1)$-manifolds each of which supports a flat Riemannian metric.
We denote by $v_n$ the volume of the ideal regular hyperbolic simplex in $\matH^n$. 
The following result is due to 
Thurston and Gromov~\cite{Thurston, Gro}
(detailed proofs can be found in~\cite{BePe} for the closed case, and in~\cite{Francaviglia1,FriPag,FM}
for the cusped case):

\begin{teo}[Gromov, Thurston]\label{prop:teo}
 Let $M$ be a complete finite-volume hyperbolic manifold with compactification
$\overline{M}$ (so $\overline{M}=M$ if $M$ is closed).
Then
$$
\left\| \overline{M} \right\|=\frac{\Vol (M)}{v_n}.
$$
\end{teo}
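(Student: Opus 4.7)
The plan is to establish the equality $\|\overline{M}\|=\Vol(M)/v_n$ via two matching inequalities, following the classical Gromov--Thurston argument via \emph{straightening} of singular simplices.

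\textbf{Lower bound $\|\overline{M}\|\geqslant\Vol(M)/v_n$.} Given any real relative cycle $z=\sum_{i=1}^k \lambda_i\sigma_i$ representing $[\overline{M},\partial\overline{M}]\in H_n(\overline{M},\partial\overline{M};\matR)$, I would first lift each singular simplex $\sigma_i$ to $\matH^n$ and replace it by its \emph{straightened} version $\str(\sigma_i)$, the geodesic simplex with the same (ordered) vertices. Straightening is homotopic to the identity, so $\str(z)$ still represents the fundamental class. The key analytic input is that the algebraic volume of any straight geodesic simplex in $\matH^n$ satisfies
$$\left|\int_{\str(\sigma_i)}\omega_{\vol}\right|\leqslant v_n,$$
with equality only in the limit of regular ideal simplices (this is the Haagerup--Munkholm / Milnor inequality). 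Evaluating the top-dimensional volume form against the fundamental class gives
$$\Vol(M)=\int_M\omega_{\vol}=\sum_i\lambda_i\int_{\str(\sigma_i)}\omega_{\vol}\leqslant v_n\sum_i|\lambda_i|,$$
and taking the infimum over all representatives of the fundamental class yields the inequality.

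\textbf{Upper bound $\|\overline{M}\|\leqslant\Vol(M)/v_n$.} The strategy is to exhibit, for every $\varepsilon>0$, a real fundamental cycle whose $\ell^1$-norm is at most $\Vol(M)/v_n+\varepsilon$. For this I would use straight simplices whose shape is arbitrarily close to the regular ideal one. In the closed case, fix a tiny parameter $\eta$ and, for a generic base-point/frame, consider the ``smeared'' chain obtained by averaging a single nearly-regular large geodesic simplex over the isometry group of $M$ (Thurston's \emph{smearing}); dividing by the appropriate normalization produces a real cycle with total weight $\Vol(M)/v_n+O(\eta)$. In the cusped case, one works with honest ideal triangulations or with relative straightened cycles whose cusp-ends are pushed off to $\partial\overline{M}$ along horospheres; here $\overline{M}$ is a manifold with flat boundary components and the relative fundamental class of Gromov applies.

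\textbf{Main obstacle.} The compact case is technically easier: straightening stays inside $M$ and the smearing construction is clean. The real difficulty is the \emph{cusped} case, where straightened simplices may wander into the cusps and the naive formula $\int_{\str(\sigma_i)}\omega_{\vol}$ need not be absolutely convergent near the ends. One must simultaneously (i) set up a workable notion of relative fundamental class on the compactification $\overline{M}$, (ii) control the behaviour of $\str(\sigma_i)$ near $\partial\overline{M}$, and (iii) match the upper and lower bounds using ideal rather than merely finite geodesic simplices, so that $v_n$ is actually attained in the limit. This is precisely the technical content of the references \cite{Francaviglia1,FriPag,FM}, and I would import those estimates rather than redo them. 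Once both inequalities are in place, the equality $\|\overline{M}\|=\Vol(M)/v_n$ follows.
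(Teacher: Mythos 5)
Your outline is the classical Gromov--Thurston argument (straightening plus the Haagerup--Munkholm bound for $\|\overline{M}\|\geqslant\Vol(M)/v_n$, smearing for the reverse inequality, with the cusped case deferred to the technical literature), and this matches what the paper does: the paper offers no proof of its own and simply cites \cite{BePe} and \cite{Ratcliffe} for the closed case and \cite{Francaviglia1,FriPag,FM} for the cusped case. Your sketch is a correct account of the argument in those references, including an accurate identification of where the genuine technical work lies.
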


\subsection{Converse inequality} \label{converse:subsection}
We prove here Proposition \ref{converse:prop}. The proof 
was communicated to us by Juan Souto, and closely follows ideas of
Thurston~\cite[Theorem 5.11.2]{Thurston} and 
Gromov~\cite[Section 2.1]{Gro}.

\begin{prop} In every dimension $n\geqslant 2$ there is a constant $D_n>1$ such that $\sigma_\infty (M) \leqslant D_n\|M\|$ for every closed hyperbolic $n$-manifold $M$.
\end{prop}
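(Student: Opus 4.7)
The plan is to build, in a sufficiently deep finite cover $\widetilde M \stackrel d\to M$, a triangulation whose number of simplices is bounded above by a constant times $\Vol(\widetilde M)$, and then to combine this with the Gromov--Thurston proportionality $\|M\|=\Vol(M)/v_n$ from Theorem \ref{prop:teo}. Since $\Vol$ is multiplicative under coverings, dividing by $d$ will turn a linear volume bound $\sigma(\widetilde M)\leqslant A_n\Vol(\widetilde M)$ into the desired inequality with $D_n=A_n v_n$.

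First I would use residual finiteness of $\pi_1(M)$ to pass to a finite cover $\widetilde M\stackrel d\to M$ whose injectivity radius exceeds some fixed constant $r_n>0$ depending only on $n$. Inside $\widetilde M$ I then fix a maximal $\epsilon$-separated subset $\{x_1,\ldots,x_N\}$ for some $\epsilon$ chosen small with respect to $r_n$. By maximality the balls $B(x_i,\epsilon)$ cover $\widetilde M$ while the balls $B(x_i,\epsilon/2)$ are pairwise disjoint and embedded, so a volume comparison in $\matH^n$ gives $N\leqslant \Vol(\widetilde M)/v_0(n,\epsilon)$, where $v_0(n,\epsilon)$ is the volume of a hyperbolic ball of radius $\epsilon/2$.

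Next I would exploit the Voronoi decomposition associated to the net. Because $\epsilon$ is small compared to the injectivity radius, each Voronoi cell $V_i$ lifts to a convex polytope in $\matH^n$ of diameter at most $2\epsilon$, and only net points lying in a fixed ball around $x_i$ can contribute a facet of $V_i$; a packing argument in $\matH^n$ thus gives a uniform bound $F_n$ on the number of faces of $V_i$ in every codimension. After a small perturbation of the net to general position, each cell is a simple polytope with at most $F_n$ faces, so the dual Delaunay triangulation splits $\widetilde M$ globally into at most $K_n N$ simplices for some constant $K_n$ depending only on $n$. This gives
$$\sigma_\infty(M)\leqslant \frac{\sigma(\widetilde M)}{d}\leqslant \frac{K_n N}{d} \leqslant \frac{K_n \Vol(\widetilde M)}{d\,v_0(n,\epsilon)}= \frac{K_n \Vol(M)}{v_0(n,\epsilon)} = \frac{K_n v_n}{v_0(n,\epsilon)}\,\|M\|,$$
and we set $D_n = K_n v_n/v_0(n,\epsilon)$, which is strictly greater than $1$ since $\sigma_\infty(M)\geqslant \|M\|>0$ forces any valid constant to be at least one (if needed, inflate slightly).

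The main obstacle I anticipate is purely combinatorial: establishing that Voronoi cells in a hyperbolic manifold of large injectivity radius have uniformly bounded combinatorial complexity, and that the local triangulations of adjacent cells can be made compatible on shared facets. Both issues are handled by exploiting the fact that once the injectivity radius is chosen large with respect to $\epsilon$, the picture inside any $10\epsilon$-ball in $\widetilde M$ is isometric to one in $\matH^n$, reducing everything to a packing problem in hyperbolic space together with a general-position argument for the net. The volume counting and residual-finiteness steps are then routine.
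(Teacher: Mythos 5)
Your proposal is correct and follows essentially the same route as the paper: pass to a deep cover via residual finiteness, take a maximal separated net, use the Dirichlet/Voronoi decomposition, bound the number and the combinatorial complexity of the cells by volume-packing estimates, and triangulate compatibly (the paper resolves the facet-matching issue by choosing symmetric triangulations of each combinatorial type rather than by your general-position/Delaunay argument, but this is a cosmetic difference). No gaps.
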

\begin{proof}
Let $R>0$ be any fixed positive real number. Since $\pi_1(M)$ is residually finite we may replace $M$ with a finite cover (which we still call $M$) with injectivity radius bigger than $3R$. 

Let $S\subset M$ be a maximal set of points that are pairwise at distance $\geqslant R$. Consider the Dirichlet tesselation of $M$ into polyhedra determined by $S$, where every point $x_0\in S$ gives rise to the polyhedron 
$$P_{x_0} = \big\{y \in M\ \big|\ d(x_0,y) \leqslant d(x, y) \ \forall x\in S \big\}.$$
This is indeed isometric to a convex polyhedron because the injectivity radius of $M$ is sufficiently big.
We have $B\left(x_0, \frac R2\right) \subset P_{x_0} \subset B\left(x_0, R\right) $. The number of polyhedra is therefore bounded above by
$$\frac {\Vol (M)}{\Vol \left(B\left(x_0, \frac R2\right)\right)}.$$
A facet $F$ of $P_{x_0}$ corresponds to some point $x_F\in S$ such that $d(x,x_0) = d(x,x_F)$ for all $x\in F$; since $d(x_0,x_F) < 2R$ the number of facets of $P_{x_0}$ is smaller or equal than the number of points in $S \cap B(x_0, 2R)$, which is in turn smaller or equal than the ratio between $\Vol \left(B\left(x_0, 3R\right)\right)$ and $\Vol \left(B\left(x_0, \frac R2\right)\right)$.

Therefore the number of facets of each $P_x$ is uniformly bounded and hence the possible combinatorial types for $P_x$ vary on a finite set which depends only on the dimension $n$ and $R$. Choose for each possible combinatorial type a triangulation which induces on every facet a triangulation that is symmetric with respect to every combinatorial isomorphism of the facet: these symmetric triangulations necessarily match to give a triangulation of $M$. Let $T$ be the maximal number of simplices of the triangulated combinatorial types. Our original manifold $M$ triangulates with at most 
$$\frac {T}{\Vol \left(B\left(x_0, \frac R2\right)\right)} \cdot \Vol (M)  = \frac {Tv_n}{\Vol \left(B\left(x_0, \frac R2\right)\right)} \cdot \| M \|  $$
simplices.
\end{proof}

\section{Higher dimensions} \label{higher}
This section is devoted to the proof of 
following theorem. 
\begin{teo}\label{bah}
For every $n\geqslant 4$ there exists a constant $C_n<1$ such that the following holds.
Let $M$ be an $n$-dimensional closed orientable hyperbolic manifold.
Then
$$
\Vol(M) \leqslant C_n v_n \sigma(M).
$$
\end{teo}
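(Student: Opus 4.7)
The plan is to sharpen the standard straightening inequality $\Vol(M)\leqslant v_n\sigma(M)$ (equivalent to $\|M\|\leqslant \sigma(M)$ via Theorem~\ref{prop:teo}) to a strict inequality with a uniform gap. Given a triangulation $\calT$ of $M$ with $\sigma(M)$ simplices, I would straighten each simplex geodesically in the universal cover $\matH^n$. Each straightened simplex has volume at most $v_n$, and summing yields the basic estimate; equality would force every simplex to be regular ideal, which is impossible in a closed manifold. The concrete target is to show that a uniform positive fraction $\beta_n$ of the simplices have volume at most $v_n-\epsilon_n$ for some uniform $\epsilon_n>0$, which immediately yields $C_n<1$.

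Let $\alpha_n$ denote the dihedral angle of the regular ideal simplex in $\matH^n$. The hypothesis $n\geqslant 4$ ensures, as hinted in the introduction, that $\alpha_n$ does not divide $2\pi$, so there exists $\eta_n>0$ with
$$|k\alpha_n-2\pi|>k\eta_n\qquad\text{for every integer }k\geqslant 1;$$
such $\eta_n$ exists because $|\alpha_n-2\pi/k|\to\alpha_n>0$ as $k\to\infty$, while each individual value $|\alpha_n-2\pi/k|$ is strictly positive by the non-divisibility. Call a straight simplex \emph{regular-like} if each of its $\binom{n+1}{2}$ dihedral angles lies within $\eta_n$ of $\alpha_n$, and \emph{bad} otherwise. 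Since the regular ideal simplex is the unique maximizer of volume among straight simplices in $\matH^n$, a compactness/continuity argument produces an $\epsilon_n>0$ such that every bad simplex has volume at most $v_n-\epsilon_n$.

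At each codimension-two face $f$ of the straightened triangulation the incident dihedral angles must sum to $2\pi$; if all $k=d(f)$ incident simplices were regular-like, the sum would lie in $(k\alpha_n-k\eta_n,\,k\alpha_n+k\eta_n)$, contradicting the defining property of $\eta_n$. Hence every codim-two face has a bad incident simplex, so with $B$ the number of bad simplices and $F$ the number of codim-two faces we obtain $F\leqslant\binom{n+1}{2}B$. On the other hand, summing all dihedral angles over all simplices gives $2\pi F$, and each regular-like simplex contributes at least $\binom{n+1}{2}(\alpha_n-\eta_n)$, so $2\pi F\geqslant\binom{n+1}{2}(\alpha_n-\eta_n)(\sigma(M)-B)$. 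Combining these two inequalities and cancelling $\binom{n+1}{2}$ gives
$$\frac{B}{\sigma(M)}\geqslant\beta_n:=\frac{\alpha_n-\eta_n}{2\pi+\alpha_n-\eta_n}>0,$$
whence $\Vol(M)\leqslant v_n\sigma(M)-B\epsilon_n\leqslant v_n\sigma(M)\bigl(1-\beta_n\epsilon_n/v_n\bigr)$ and one may set $C_n:=1-\beta_n\epsilon_n/v_n<1$.

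The main obstacle I expect is justifying the angle-sum identity ``dihedrals around each codim-two face sum to $2\pi$'' for the straightening of a loose combinatorial triangulation. A priori, individual straightened simplices may become degenerate, adjacent straightenings may overlap or wind around a shared codim-two face rather than tiling a neighbourhood, so the link of $f$ in the straightened picture need not be a round circle of total angle $2\pi$. One would need either to choose a consistent vertex lift in $\matH^n$ and analyse the resulting local picture near each $f$, absorbing degenerate simplices into the bad count so that the counting inequalities are preserved, or to replace the pointwise angle-sum identity by an averaged or algebraic analogue (for instance a suitable Schl\"afli-type relation) that is robust to geometric misfit. Making this precise, so that the clean counting argument above applies rigorously, is the technical heart of the proof.
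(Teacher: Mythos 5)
Your outline reproduces the easy part of the paper's argument (straightening, volume $\leqslant v_n$ per simplex, the angle $\alpha_n$ not dividing $2\pi$ for $n\geqslant 4$), but the step you yourself flag as the ``technical heart'' is a genuine gap, and it is more severe than your proposed repairs can handle. For a loose triangulation the straightening $\str_\Tt$ is only a degree-one map homotopic to the identity, not a homeomorphism: around a codimension-two face the straightened simplices need not tile a neighbourhood, and the local degree there can be $0$, $2$, or arbitrarily large (Figure~\ref{degree:fig}, Remark~\ref{rem1}). So the best available substitute for your angle-sum identity is a signed sum equal to $2\pi d$ for some integer $d\geqslant 0$, with contributions of both signs and with flat simplices contributing angle but no volume. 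With $d\geqslant 2$ your dichotomy collapses: a face can be incident to $k$ positive simplices, all arbitrarily close to regular ideal, winding $d$ times around it, and since the set of numbers $2\pi d/k$ is dense in $(0,\infty)$ no fixed $\eta_n$ can force a ``bad'' simplex at every such face once all $d$ are allowed (your $\eta_n$ only guards against $d=1$). Absorbing degenerate simplices into the bad count, or invoking a Schl\"afli-type averaged relation, does not touch this, because the troublesome configurations are made of nondegenerate, positively oriented, almost-regular simplices. Note also that your intermediate target --- a uniform positive fraction of simplices of volume at most $v_n-\epsilon_n$ --- is neither proved nor needed in the paper, whose argument explicitly allows triangulations in which all but a tiny fraction of the simplices are $\varepsilon_n$-big.

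What the paper does instead is convert ``many big simplices around a face'' into a quantitative loss of volume by overlap, rather than into a contradiction. Lemma~\ref{maximal-angle} pins the dihedral angles of $\varepsilon_n$-big simplices strictly between $\frac{2\pi}{k_n+1}(1+a_n)$ and $\frac{2\pi}{k_n}(1-a_n)$; a codimension-two face incident to at least $k_n+1$ big simplices (with multiplicity) is called full, and Lemma~\ref{palle} provides a uniform $\delta_n$ such that the $\delta_n$-balls centred at the incenters of the $(n-2)$-faces of a big straight simplex are pairwise disjoint and miss the non-incident facets. Decomposing $M$ into $M_1$ ($\delta_n$-balls at incenters of straightened full faces), $M_2$ (big simplices minus $M_1$) and $M_3$ (small simplices), using Lemma~\ref{algvol} to cover $M$ and the counting Lemma~\ref{stimaN}, one obtains the three estimates of Lemmas~\ref{stima1a}, \ref{stima2} and \ref{stima3}, whose maximum is the desired $C_n<1$. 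To salvage your counting scheme you would have to supply an analogue of this overlap estimate; the combinatorial count of angles alone cannot close the argument.
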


Putting toghether this result with Theorem~\ref{prop:teo} we get the following.

\begin{cor}\label{main:cor} 
We have $\|M\| \leqslant C_n \sigma(M)$ for every closed orientable hyperbolic $n$-manifold $M$ of dimension $n\geqslant 4$.
\end{cor}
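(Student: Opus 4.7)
The plan is to straighten the given triangulation using the hyperbolic structure of $M$ and to exploit an angle-rigidity obstruction that is specific to dimensions $n\geqslant 4$. Let $\mathcal{T}$ be a triangulation realizing $\sigma(M)=t$. I lift each simplex to $\matH^n$ via the developing map and replace it by its geodesic straightening $\sigma_i^*$, the convex hull of its lifted vertices. These straightenings assemble into a continuous self-map $f\colon M\to M$ homotopic to the identity via the straight-line homotopy in $\matH^n$; pulling back the hyperbolic volume form yields
$$
\Vol(M)=\sum_i\epsilon_i\,\Vol(\sigma_i^*),\qquad \epsilon_i\in\{+1,-1\},
$$
where $\epsilon_i$ records the sign with which $\sigma_i^*$ is traversed. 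Combined with the classical bound $\Vol(\sigma_i^*)\leqslant v_n$, attained only when $\sigma_i^*$ is regular ideal, this yields the baseline $\Vol(M)\leqslant v_n t$. The goal is to improve it by a multiplicative factor $C_n<1$.

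The new input is a dihedral-angle obstruction. Let $\alpha_n$ denote the dihedral angle of the regular ideal $n$-simplex. For $n\geqslant 4$, Niven's theorem implies that $\alpha_n$ is an irrational multiple of $\pi$, so the equation $k\alpha_n=2\pi$ has no positive integer solution; among integers $k$ for which $k\alpha_n$ can approximate $2\pi$ at all -- a bounded range since $\alpha_n\in(\pi/3,\pi/2)$ -- there is a uniform gap $\delta_n:=\min_k|k\alpha_n-2\pi|>0$. Next I extract a local identity at each codimension-$2$ face $e$ of $\mathcal{T}$: in the universal cover the $k(e)$ straightened simplices containing a lift $\tilde e$ of $e$ cyclically share codim-$1$ faces, these being convex hulls of common vertex subsets, and hence cut out cyclically glued geodesic wedges in the $2$-plane normal to $\tilde e$. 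Preservation of the winding number under the straight-line homotopy forces
$$
\sum_{\sigma_i\ni e}\epsilon_i\,\theta_i^*(e)=2\pi,
$$
where $\theta_i^*(e)$ is the straightened dihedral angle of $\sigma_i^*$ at $e$. When all adjacent $\sigma_i^*$ are positively oriented and close to regular ideal, one has $\theta_i^*(e)\approx\alpha_n$ and the identity degenerates to $k(e)\alpha_n\approx 2\pi$, contradicting $\delta_n$ once the deviations are small enough.

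To turn this into a quantitative bound, suppose $\Vol(M)>(1-\varepsilon)v_n t$ for small $\varepsilon>0$. The total defect $\sum_i(v_n-\epsilon_i\Vol(\sigma_i^*))<\varepsilon v_n t$ simultaneously bounds the number of negatively oriented $\sigma_i^*$ (each contributing $\geqslant v_n$) and, via Markov together with the $C^2$-rigidity of the volume maximum at the regular ideal shape, the number of positively oriented $\sigma_i^*$ whose dihedral angles deviate substantially from $\alpha_n$. A double count over (simplex, codim-$2$ face) incidences then produces a codim-$2$ face whose adjacent simplices are all positively oriented and close to regular ideal, provided $\varepsilon$ is chosen sufficiently small in terms of $n$. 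This contradicts the previous paragraph, and unwinding the estimates yields an explicit $C_n<1$ with $\Vol(M)\leqslant C_n v_n t$; Corollary~\ref{main:cor} then follows from Theorem~\ref{prop:teo}.

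The main obstacle will be the double count in the final step, since the degrees $k(e)$ of codim-$2$ faces are not bounded a priori. The key point is that large-$k(e)$ faces are automatically outside the near-regular regime: the average wedge angle $2\pi/k(e)$ is then far from $\alpha_n$, so some adjacent simplices are forced to have shapes far from regular ideal and collectively inject large defect. Both regimes -- small $k(e)$ via the Niven gap, large $k(e)$ via the average-angle mismatch -- have to be estimated in parallel to extract the universal constant $C_n<1$.
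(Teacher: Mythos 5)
Your overall plan (straighten the triangulation, use that $2\pi/\alpha_n\notin\mathbb{Z}$ for $n\geqslant 4$, and convert a small total volume defect into angle information at codimension-two faces) is the same as the paper's, but the step your contradiction rests on is false. You assert that at every codimension-two face $e$ the straightened signed dihedral angles satisfy $\sum_{\sigma_i\ni e}\epsilon_i\,\theta_i^*(e)=2\pi$, justified by ``preservation of the winding number under the straight-line homotopy''. The homotopy is not relative to $e$: while the map is deformed, the image of the link of $e$ can sweep across the (also moving) image of $e$, so no winding number is preserved. What is true (when $\str(e)$ is nondegenerate) is only that the signed angle sum equals $2\pi w_e$ for some integer $w_e$, the degree of the induced map of the link circle to the unit normal circle of $\str(e)$, and $w_e$ may be $0$, negative, or arbitrarily large; the paper stresses exactly this (Remark~\ref{rem1}, Figure~\ref{degree:fig}): many nearly regular ideal, positively oriented simplices can wind several times around a face. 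Hence the configuration you try to rule out --- a codimension-two face all of whose incident simplices are positive and close to regular ideal --- is not contradictory at all: it just has $k(e)\alpha_n\approx 2\pi w_e$ with $w_e\geqslant 2$. Your fallback for large $k(e)$ (``the average wedge angle $2\pi/k(e)$ is far from $\alpha_n$'') invokes the same false identity, and the uniform gap $\delta_n$ you extract from irrationality of $\alpha_n/\pi$ exists only for $k$ in a bounded range: since $\alpha_n/2\pi$ is irrational, $k\alpha_n$ comes arbitrarily close to multiples of $2\pi$ as $k$ grows. So the argument collapses precisely in the regime the paper identifies as the real difficulty. (A smaller soft spot: the ``$C^2$-rigidity of the volume maximum'' you invoke to say that big simplices have angles near $\alpha_n$ is not proved; the paper obtains this by a compactness/continuity argument, Proposition~\ref{regular-converge} and Lemma~\ref{maximal-angle}.)

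The paper's proof never uses an angle-sum identity; it quantifies the volume wasted by overlapping in a way that is monotone in the winding. Positive simplices cover $M$ (Lemma~\ref{algvol}); every $\varepsilon_n$-big simplex has all dihedral angles in $\left(\frac{2\pi}{k_n+1}(1+a_n),\frac{2\pi}{k_n}(1-a_n)\right)$ and the incenters of its codimension-two faces lie at distance $>2\delta_n$ from non-incident faces (Lemmas~\ref{maximal-angle} and~\ref{palle}). One then places a $\delta_n$-ball at the incenter of each ``full'' face and double counts incidences: the balls contribute at most $e_{\rm f}\eta_n$ to the volume, while each big simplex forfeits a fraction at least $\frac{1+a_n}{k_n+1}$ of each such ball it meets, and $N\geqslant\max\{5t,(k_n+1)e_{\rm f}\}$ once the small simplices are few. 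High winding only increases $N$ and hence the waste, so no control of local degrees is needed. To repair your argument you would have to either prove $w_e=1$ (false in general) or replace the exact identity by a packing estimate of this kind.
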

Since the simplicial volume is a characteristic number, Corollary~\ref{main:cor}
implies in turn Theorem~\ref{4:teo}.

\begin{cor}
We have $\|M\| \leqslant C_n \sigma_\infty(M)$ for every closed hyperbolic $n$-manifold $M$ of dimension $n\geqslant 4$.
\end{cor}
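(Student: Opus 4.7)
The plan is to promote Corollary~\ref{main:cor} to a statement about the stable invariant $\sigma_\infty$ by applying it to every finite covering of $M$ and then passing to the infimum. Multiplicativity of simplicial volume under finite coverings will make this averaging transparent, and no new geometric input is needed beyond what was used to prove Corollary~\ref{main:cor}.

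Assume first that $M$ is orientable. Every finite covering $\widetilde M \stackrel d\to M$ is again a closed orientable hyperbolic $n$-manifold, so Corollary~\ref{main:cor} applies directly to $\widetilde M$ and gives
$$
\|\widetilde M\| \leqslant C_n\, \sigma(\widetilde M).
$$
Since $\|\cdot\|$ is a characteristic number we have $\|\widetilde M\| = d\,\|M\|$; dividing both sides by $d$ therefore yields
$$
\|M\| \leqslant C_n \cdot \frac{\sigma(\widetilde M)}{d}.
$$
Taking the infimum over all finite coverings $\widetilde M \to M$ and using the definition of stable $\Delta$-complexity produces $\|M\| \leqslant C_n\, \sigma_\infty(M)$.

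If $M$ is non-orientable, pass to its orientable double cover $\widehat M \to M$. Both invariants are characteristic numbers, so $\|\widehat M\| = 2\|M\|$ and $\sigma_\infty(\widehat M) = 2\sigma_\infty(M)$; applying the orientable case to $\widehat M$ and dividing by $2$ delivers the same bound for $M$. There is no genuine obstacle in this argument: all the content lies in Corollary~\ref{main:cor}, and the passage from $\sigma$ to $\sigma_\infty$ is purely formal, which is precisely the remark the authors make just before stating the corollary.
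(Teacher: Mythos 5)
Your proposal is correct and is essentially the paper's own argument: the paper deduces this corollary from Corollary~\ref{main:cor} in one line, using exactly the multiplicativity of $\|\cdot\|$ under finite coverings that you spell out. Your explicit treatment of the non-orientable case via the orientable double cover is a harmless (and welcome) elaboration of a point the paper leaves implicit.
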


\subsection{Straight simplices}
We recall that every pair of points $\mathno$ is connected by a unique geodesic segment (which 
has infinite length if any of its endpoints lies in $\partial\mathno$). A subset in $\mathno$ is \emph{convex} if whenever it contains a pair of points it also contains the geodesic segment connecting them. The \emph{convex hull} of a set $A$ is defined as usual as the intersection of all convex sets containing $A$.

A \emph{(geodesic) $k$-simplex} $\Delta$ in $\mathno$ is the convex hull of $k+1$ points in $\mathno$, called \emph{vertices}. A $k$-simplex is:
\begin{itemize}
\item \emph{ideal} if all its vertices lie in $\partial\matH^n$,
\item \emph{regular} if every permutation of its vertices is induced by an isometry of $\matH^n$,
\item \emph{degenerate} if it is contained in a $(k-1)$-dimensional subspace of $\matH^n$.
\end{itemize}

Let $v_n$ be the volume of the regular ideal simplex in $\mathno$. 

\begin{teo}[\cite{HM, Pe}] \label{maximal:teo}
Let $\Delta$ be a geodesic $n$-simplex in $\mathno$. Then $\Vol(\Delta)\leqslant v_n$, and $\Vol(\Delta) = v_n$ if and only if $v_n$ is ideal and regular.
\end{teo}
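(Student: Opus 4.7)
The plan is to prove the theorem in two stages: first reduce to ideal simplices, then identify the regular ideal simplex as the unique volume maximizer among them. (I read the ``if and only if'' clause as ``$\Delta$ is ideal and regular'', correcting the apparent typo.)

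For the reduction, given a geodesic $n$-simplex $\Delta$ with a vertex $v$ in the interior of $\mathno$, I would consider the one-parameter family $\Delta(t)$ obtained by moving $v$ along a geodesic ray toward $\partial\matH^n$ while keeping the remaining $n$ vertices fixed. A direct computation with the hyperbolic volume form---for instance, slicing $\Delta(t)$ by horospheres based at the limit point of the ray, or differentiating the cone volume with apex $v$---shows that $\Vol(\Delta(t))$ is strictly increasing in $t$ unless $\Delta$ was already degenerate. Iterating this argument over all non-ideal vertices yields
$$
\sup\bigl\{\Vol(\Delta):\Delta\text{ geodesic }n\text{-simplex in }\mathno\bigr\}=\sup\bigl\{\Vol(\Delta):\Delta\text{ ideal}\bigr\},
$$
and moreover any non-degenerate non-ideal simplex is strictly dominated by an ideal one.

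Among ideal simplices, the configuration space of ordered ideal $(n+1)$-tuples in $\partial\matH^n\cong S^{n-1}$ modulo $\mathrm{Isom}(\matH^n)$ is a smooth manifold on which the volume function is continuous and proper (volumes tend to $0$ as vertices collide), so the maximum is attained. Schl\"afli's differential formula
$$
d\Vol(\Delta)=-\frac{1}{n-1}\sum_{F}\Vol(F)\,d\theta_F,
$$
summed over codimension-$2$ faces $F$ with interior dihedral angles $\theta_F$, then shows that the regular ideal simplex is automatically a critical point, thanks to its full $\mathfrak{S}_{n+1}$-symmetry: the partial derivatives in every permutation-equivariant deformation must cancel in pairs. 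The remaining content is that the regular ideal simplex is the \emph{unique} critical point and a global maximum, which is exactly the content of the theorems of Haagerup--Munkholm~\cite{HM} and of Peyerimhoff~\cite{Pe}.

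The main obstacle is precisely this uniqueness and global-maximality statement in dimensions $n\geqslant 4$. For $n=2$ all ideal triangles are congruent, so the statement is trivial; for $n=3$ Milnor's closed formula for the volume of an ideal tetrahedron in terms of the Lobachevsky function reduces the problem to an elementary convexity argument on a two-dimensional simplex of parameters. For $n\geqslant 4$ no such closed form is available, and one must establish a sharp inequality over the entire noncompact configuration space, not merely an infinitesimal analysis at the symmetric critical point. Haagerup and Munkholm achieve this by barycentrically decomposing $\Delta$ into $(n+1)!$ orthoschemes, comparing each to the corresponding regular-ideal orthoscheme through an induction on edge lengths, and reassembling the estimates; Peyerimhoff instead argues via a concavity property for an auxiliary function on the configuration space. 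I would quote one of these two arguments rather than attempt to reproduce the delicate orthoscheme or concavity estimate, which is where the genuine work lies.
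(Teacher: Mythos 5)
The paper offers no proof of this statement: it is imported wholesale from \cite{HM} and \cite{Pe}, which is exactly what your outline does for the part that carries all the weight (the reduction to ideal simplices and the existence of a maximizer are standard, and you are right that ``$v_n$ is ideal and regular'' is a typo for ``$\Delta$ is ideal and regular''). So your proposal matches the paper's treatment. One factual correction, since you summarize the cited arguments: Haagerup and Munkholm do not use an orthoscheme decomposition --- after reducing to ideal simplices they express the volume as an explicit integral in the projective model and conclude with a Jensen/arithmetic--geometric mean inequality --- and Peyerimhoff argues by Steiner symmetrization rather than by a concavity estimate on the configuration space. This does not affect the logic of your proof, since you only cite these results, but note also that the Schl\"afli-formula discussion is delicate for ideal simplices (for $n=3$ the codimension-two faces have infinite length and the formula must be regularized) and is not needed once the sharp global statement is taken from the references.
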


A \emph{singular $k$-simplex} in $\matH^n$ is of course a continuous map $\sigma\colon \Delta_k \to \matH^n$
from the standard $k$-simplex $\Delta_k\subset \matR^{k+1}$ to hyperbolic space. The corresponding \emph{straight} simplex $\sigma^\stra\colon \Delta_k \to \matH^n$ is defined as follows: set
 $\sigma^\stra(v) = \sigma(v)$ on every vertex $v$ of $\Delta_k$, and
extend using barycentric coordinates (which exist in $\matH^n$, using
the hyperboloid model). The image of $\sigma^\stra$ is the convex hull
of the images of the vertices of $\Delta_k$ via $\sigma$, hence it is a geodesic simplex.

Using again barycentric coordinates, for every singular $k$-simplex in $\matH^n$ we can define a
homotopy $H(\sigma)\colon \Delta_k\times [0,1]\to \matH^n$ 
between $\sigma$ and $\sigma^\stra$ by setting $H(\sigma)(p,t)= 
t\sigma (p)+(1-t)\sigma^\stra(p)$. The following lemma readily descends from the definitions
and from
the fact that barycentric coordinates commute with the isometries of $\matH^n$.

\begin{lemma}\label{invariance}
 Let $\sigma\colon\Delta_k\to\matH^n$ be a singular simplex, and let $g$ be an isometry
of $\matH^n$. Then:
\begin{enumerate}
 \item $(g\circ\sigma)^\stra=g\circ\sigma^\stra$ and $H(g\circ \sigma)=g\circ H(\sigma)$;
\item
if $h<k$ and  $i\colon \Delta_{h}\to \Delta_k$ is an affine inclusion of $\Delta_h$ onto an
$h$-dimensional face
of $\Delta_k$, then $(\sigma\circ i)^\stra=\sigma^\stra\circ i$ and
$H(\sigma\circ i)=H(\sigma)\circ (i\times {\rm Id})$.
\end{enumerate}
\end{lemma}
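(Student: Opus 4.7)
The plan is to unwind the definitions and reduce both claims to the fact that the hyperbolic barycentric average is $\mathrm{Isom}(\matH^n)$-equivariant and respects the addition of vertices with zero weight. Recall that if $v_0,\dots,v_k$ are the vertices of $\Delta_k$ and a point $p\in\Delta_k$ has euclidean barycentric coordinates $(t_0,\dots,t_k)$, then by definition $\sigma^\stra(p)$ is the point of $\matH^n$ with hyperbolic barycentric coordinates $(t_0,\dots,t_k)$ relative to $\sigma(v_0),\dots,\sigma(v_k)$. Working in the hyperboloid model, these barycentric coordinates are obtained by normalizing the positive linear combination $\sum t_i \sigma(v_i)\in\matR^{n,1}$ and projecting back onto the hyperboloid; this construction is manifestly $O^+(n,1)$-equivariant, and it gives zero weight to any vertex whose coefficient vanishes.

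For part (1), the point $(g\circ\sigma)^\stra(p)$ is the hyperbolic barycenter of $g\sigma(v_0),\dots,g\sigma(v_k)$ with weights $t_0,\dots,t_k$. By the equivariance recalled above, this coincides with $g$ applied to the hyperbolic barycenter of the $\sigma(v_i)$ with the same weights, that is, with $g(\sigma^\stra(p))$. The identity $H(g\circ\sigma)=g\circ H(\sigma)$ follows by the same principle: each value $H(\sigma)(p,t)$ is a point on the geodesic between $\sigma(p)$ and $\sigma^\stra(p)$, which is just a two-vertex hyperbolic barycenter, so applying $g$ and applying the interpolation commute.

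For part (2), write the affine inclusion $i\colon\Delta_h\to\Delta_k$ as the map sending the vertices of $\Delta_h$ to some subset $v_{j_0},\dots,v_{j_h}$ of the vertices of $\Delta_k$. A point $q\in\Delta_h$ with barycentric coordinates $(s_0,\dots,s_h)$ is sent by $i$ to the point of $\Delta_k$ whose barycentric coordinate at the slot $j_\ell$ is $s_\ell$ and is zero elsewhere. Hence both $(\sigma\circ i)^\stra(q)$ and $\sigma^\stra(i(q))$ compute the hyperbolic barycenter of the same points $\sigma(v_{j_\ell})$ with the same weights $s_\ell$ (the remaining vertices contributing nothing). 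The equality $H(\sigma\circ i)=H(\sigma)\circ(i\times\id)$ follows by applying the same remark to the two-point interpolation between $\sigma\circ i$ and $(\sigma\circ i)^\stra$.

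There is really no hard step here; the only thing to get right is the model-dependent verification that hyperbolic barycentric coordinates are equivariant under isometries and stable under adding vertices of weight zero, which is immediate on the hyperboloid model. Once this is in place, both items are direct substitutions.
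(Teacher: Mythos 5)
Your proposal is correct and follows exactly the route the paper intends: the paper gives no written proof, stating only that the lemma ``readily descends from the definitions and from the fact that barycentric coordinates commute with the isometries of $\matH^n$,'' which is precisely the equivariance-in-the-hyperboloid-model argument (plus the zero-weight observation for faces) that you spell out. Nothing to correct.
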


Henceforth, $M$ will always be an oriented hyperbolic closed $n$-dimensional manifold. 
Let $\sigma\colon \Delta_k\to M$ be a singular simplex in $M$. The straightening $\sigma^\stra\colon\Delta_k \to M$ 
is defined by lifting the map $\sigma$ to the universal covering $\matH^n$, straightening it, and then projecting it back to $M$. 
By  Lemma~\ref{invariance}
this operation does not depend on the chosen lift.

The {\em algebraic volume} of a singular $n$-simplex $\sigma\colon\Delta_n\to M$ is   
$$\algvol(\sigma)=\int_{\sigma^\stra}d\vol = \int_{\Delta_n} (\sigma^\stra)^*d\vol$$
where $d\vol$ is the volume form on $M$.
The absolute value $|\algvol(\sigma)|$ equals the volume of the image of any lift 
$\widetilde{\sigma}^\stra$ of $\sigma^\stra$ to $\matH^n$. In particular,
$\algvol(\sigma)$ vanishes if and only if $\widetilde\sigma^\stra$ is degenerate. When $\algvol(\sigma)\neq 0$ the straightened singular simplex $\sigma^\stra$ is an immersion and the sign of $\algvol(\sigma)$ depends on whether
$\sigma^\stra$ is orientation-preserving or not.

As we said above, every geodesic $n$-simplex in $\matH^n$ has volume smaller
than the volume $v_n$ of the regular ideal simplex. In
particular we always have 
$$-v_n\leqslant\algvol(\sigma)\leqslant v_n.$$ 
\begin{defn}
A singular $n$-simplex $\sigma$ in $M$ is {\em positive}
if $\algvol(\sigma)>0$, {\em negative} if $\algvol(\sigma)<0$,
and {\em flat} if $\algvol(\sigma)=0$. 
For $\varepsilon>0$, the singular simplex $\sigma$ is 
\emph{$\varepsilon$-big} if $$ \algvol (\sigma) \geqslant
(1-\varepsilon)v_n,$$ 
and \emph{$\varepsilon$-small} otherwise. 
\end{defn}

\subsection{The straightening as a map}\label{stmap}
Let now $\Tt$ be a (loose) triangulation of an oriented hyperbolic closed manifold $M$,
that is the realization of $M$ as the union of $m$ copies of the
standard simplex $\Delta_n$ quotiented by an orientation-reversing
simplicial pairing of their $(n-1)$-dimensional faces. Every simplex
in $\Tt$ is a copy of $\Delta_n$ and hence is the image of an
orientation-preserving singular simplex $\sigma_i\colon \Delta_n\to
M$. Henceforth, if $\sigma$ is a singular simplex in $M$,
we denote by $|\sigma|\subseteq M$ the image of $\sigma$ in $M$.

We define a map 
$$\str_\Tt \colon M\to M$$ 
which corresponds to the simultaneous straightening of all the simplices of $\Tt$. If $p\in M$
lies in  $|\sigma_i |$, we choose a point $q\in \Delta_n$ such
that $\sigma_i(q)=p$ and set $\str_\Tt(p)=\sigma_i^\stra(q)$. Of course, if the point $p$ 
belongs to the $(n-1)$-skeleton of $\Tt$, then 
both the choice of $\sigma_i$ and/or the choice of the point $q\in\Delta_n$ 
are somewhat arbitrary. However, Lemma~\ref{invariance} ensures that $\str_\Tt$ is well-defined, continuous and homotopic to the identity
of $M$. 
In what follows, when a triangulation $\Tt$ is fixed and no ambiguities can arise, we will denote
the map $\str_\Tt$ simply by $\str$.

It is important now to note that
the straightened simplices of $\Tt$ do not necessarily form a triangulation of
$M$ in any reasonable sense: straightened simplices may degenerate
and overlap (and they often do, see 
also Remark \ref{rem1}). However, one important property
is preserved by the straightening: the positive simplices still cover the manifold $M$.

\begin{lemma}\label{algvol}
Let $\sigma_1,\ldots,\sigma_t$ be the simplices of a triangulation $\Tt$ of $M$. Then
$$
M=\bigcup_{{\rm positive\ }\sigma_i} |\sigma_i^\stra|= \str_\Tt\left(\bigcup_{{\rm positive\ } \sigma_i} |\sigma_i|\right) ,
$$
so
$$
\Vol(M)\leqslant \sum_{{\rm positive\ }\ \sigma_i} \Vol\big(|\sigma_i^\stra|\big).
$$
\end{lemma}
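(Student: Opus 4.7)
The plan is to exploit the fact that $\str_\Tt$, being homotopic to the identity (as noted in Subsection~\ref{stmap}), is a degree-$1$ map of the closed oriented manifold $M$, and then to run a standard degree-counting argument at a generic point.

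First I would record the identity $\str_\Tt(|\sigma_i|) = |\sigma_i^\stra|$, which is essentially the definition of $\str_\Tt$: if $p = \sigma_i(q)$, then $\str_\Tt(p) = \sigma_i^\stra(q)$. This immediately yields the second equality $\str_\Tt\bigl(\bigcup_{\text{positive}}|\sigma_i|\bigr) = \bigcup_{\text{positive}}|\sigma_i^\stra|$, and once the first equality is proven the volume inequality follows from subadditivity of the Riemannian measure.

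The heart of the argument is the first equality. Let $F \subseteq M$ denote the closed set $\bigcup_{\text{positive }\sigma_i}|\sigma_i^\stra|$. I would pick a point $p \in M \setminus \bigl(F \cup \bigcup_{\text{flat }\sigma_i}|\sigma_i^\stra|\bigr)$ (a priori this set could be empty — that is exactly what we want to prove — so the argument proceeds by contradiction, assuming it is nonempty) and further perturb $p$ so that it is a regular value of the restriction of $\str_\Tt$ to the interior of each top-dimensional simplex of $\Tt$; this is possible by Sard's theorem applied piecewise, and $p$ can additionally be chosen away from the codimension-$1$ skeleton of $\Tt$ and from the images of the lower-dimensional faces of the straightened simplices, all of which have measure zero. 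Then $\str_\Tt^{-1}(p)$ is a finite set lying in the interiors of certain simplices $\sigma_i$, and each such preimage contributes $+1$ to the local degree if $\sigma_i$ is positive and $-1$ if $\sigma_i$ is negative, by the definition of positive/negative via the sign of $\algvol(\sigma_i)$. Flat simplices have $(n-1)$-dimensional image and thus contain no preimages of $p$. By our choice of $p$, no positive simplex contributes, so the degree equals $-k$ for some $k \geqslant 0$, contradicting $\deg(\str_\Tt) = \deg(\id_M) = 1$.

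Therefore every point of $M$ outside the measure-zero set $\bigcup_{\text{flat}}|\sigma_i^\stra|$ lies in $F$; since $F$ is closed, in fact $M = F$, as required. The mildly delicate point is justifying the degree/Sard step for a map that is only piecewise smooth across the $(n-1)$-skeleton of $\Tt$; this is handled by noting that we can restrict attention to points whose preimages miss this skeleton, and on the complement $\str_\Tt$ is smooth simplex-by-simplex, so the classical smooth degree formula applies. No sharper regularity is needed.
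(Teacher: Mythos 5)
Your proposal is correct and follows essentially the same route as the paper: both arguments use that $\str_\Tt$ is homotopic to the identity and hence has degree one, count local degrees $\pm 1$ at a generic point away from the images of the $(n-1)$-skeleton and the flat simplices to conclude that such a point must lie in the image of a positive simplex, and then finish by density together with the closedness of $\bigcup_{\text{positive}}|\sigma_i^\stra|$. The only cosmetic differences are that the paper works with \emph{topologically} regular values (points where $\str_\Tt$ is a local homeomorphism) rather than invoking Sard's theorem piecewise, and states the degree count directly rather than by contradiction.
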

\begin{proof}
Let $M_0\subseteq M$ be the image along $\str_\Tt$ of the $(n-1)$-skeleton 
of $\calT$ and of the flat simplices of $\Tt$. The complement $M\setminus M_0$ is open and dense and consists of topologically regular values for the map $\str_\Tt$, that is the pre-image of every point in $M\setminus M_0$ consists of finitely many points where $\str_\Tt$ is a local homeomorphism and has hence local degree $\pm 1$. Since $\str_\Tt$ has globally degree one, every topologically regular value lies in the image of at least one positive simplex.
The conclusion follows since the image via $\str_\Tt$ of the positive simplices of $\Tt$
is compact, whence closed.
\end{proof}

\subsection{Strategy of the proof of Theorem \ref{bah}.}
We outline here the proof of Theorem \ref{bah}. Let $\Tt$ be a triangulation of a closed hyperbolic manifold $M$ of dimension $n\geqslant 4$. We need to prove that $\vol (M) \leqslant C_nv_n t$ where $t$ is the number of simplices in $\Tt$ and $C_n<1$ is a constant depending only on the dimension $n$.

Suppose for simplicity that every simplex of $\Tt$ is positive. In that lucky case the map $\str_\Tt$ is a homeomorphism and the straightened triangulation is a genuine triangulation (which we still denote by $\Tt$) made of straight positive simplices. The key observation now is that in dimension $n\geqslant 4$ the ratio between $2\pi$ and the dihedral angle of an ideal regular geodesic simplex is not an integer. Therefore we may choose $\varepsilon_n>0$ independently of $\Tt$ in such a way that every $(n-2)$-dimensional face $E$ of $\Tt$ enjoys the following properties: 
\begin{enumerate}
\item
the face $E$ is contained in at least one $\varepsilon_n$-small simplex of $\Tt$, and 
\item the number of $\varepsilon_n$-big simplices of $\Tt$   that contain $E$ is uniformly bounded from above
by a universal constant. 
\end{enumerate}
These facts easily imply that the ratio between the number of $\varepsilon_n$-big simplices
of $\Tt$ and the total number $t$ of simplices of $\Tt$ is smaller than some constant $K_n< 1$ independent of $\Tt$. Therefore the volume of $M$ is smaller than 
$$t\big(v_nK_n + (1-\varepsilon_n)v_n\big(1-K_n)) = tv_n\left(K_n + (1-\varepsilon_n)(1-K_n)\right) =  tv_nC_n$$ 
with $C_n = 1-\varepsilon_n(1-K_n)<1$.

We now need to refine this strategy to deal with negative and flat simplices. As we said above, the straightening of $\Tt$ may create degenerations and overlappings of simplices. Degenerations and overlappings are volume-consuming, so
it is reasonable to expect that the inequality $\vol(M)\leqslant C_nt$ holds \emph{a fortiori} in presence of negative and flat simplices: the generalization of the above argument however is not immediate. 

Note for instance that both points (1) and (2) stated above do not hold for a general triangulation $\Tt$: a codimension $(n-2)$ face $E$ may be incident to arbitrarily many arbitrarily big positive simplices, that wind many times around $E$ (the local degree of the straightening map around $E$ can be arbitrarily big! See Figure \ref{degree:fig}, which
is inspired by~\cite[Example 2.6.4]{Francaviglia2}, \cite[Example 4.1]{Francaviglia3}). 
Of course by winding many times around $E$ the simplices overlap a lot and hence a lot of volume is wasted: we will need to estimate that 
loss of volume to prove our theorem.

\begin{figure}
 \begin{center}
  \includegraphics[width = 6.5 cm]{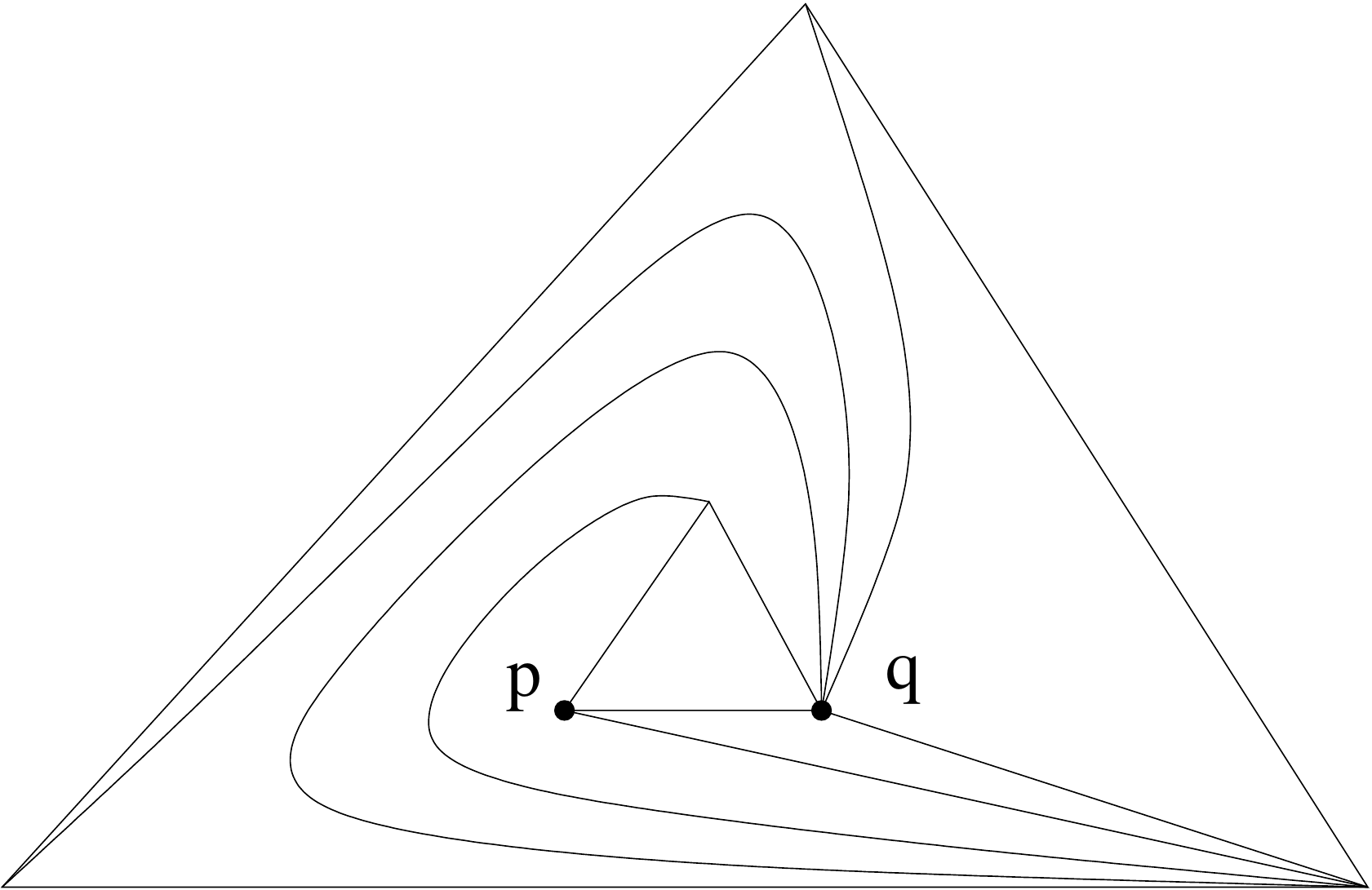}
 \end{center}
 \nota{The local degree of the straightening map associated to the trangulation described here
(which may be thought as a triangulation of a portion of the projective model of
the hyperbolic plane) is equal to $0$ in $p$ and to $2$ in $q$.}
 \label{degree:fig}
\end{figure}

\subsection{The volume of a simplex}\label{simplices:sub}
We will need to estimate (from below) the overlapping regions of big simplices. To do so we first study their geometry.

For every $n\geqslant 3$ and $k\leqslant n$, we denote by 
$\calV_k(\mathno)$ the space of unordered $(k+1)$-tuples of (not necessarily distinct)
points of $\mathno$, \emph{i.e.}~the topological space 
$(\mathno)^{k+1}/\mathfrak{S}_{k+1}$, where $\mathfrak{S}_{k+1}$ is the permutation group on $k+1$ elements.

We also denote by
$\calS_k(\mathno)$ the set of $k$-dimensional geodesic simplices
of $\mathno$, and we endow $\calS_k(\mathno)$
with the topology induced by the Hausdorff topology on closed
subsets of $\overline{\matH^n}$. 
The convex hull defines a surjective map
$$\Conv\colon \calV_k(\mathno) \to \calS_k(\mathno).$$
We will often use the following notation.
\begin{defn}
If $K$ is any subset of $\mathno$, we denote by $H(K)$ the smallest geodesic subspace of $\mathno$ containing $K$
(if $K$ consists of a single point of $\partial\matH^n$, then we set $H(K)=K$).
\end{defn}
Of course an element $K$ in $\calV_k(\mathno)$ or $\calS_k(\mathno)$ is degenerate if
$\dim H(K)<k$. We denote by $\calV_k^*(\mathno)$ and $\calS_k^*(\mathno)$ the set of nondegenerate elements of $\calV_k(\mathno)$ and $\calS_k(\mathno)$. The proof of the following easy result is left to the reader:

\begin{lemma}\label{ovvio}
The map 
$$
\Conv\colon \calV^*_k(\mathno)\to \calS^*_k(\overline{\matH^n})
$$
is a homeomorphism.
\end{lemma}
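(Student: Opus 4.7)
The plan is to establish the three standard ingredients (continuity, bijectivity, openness/closedness) separately, exploiting compactness of $\overline{\matH^n}$.

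\textbf{Step 1: continuity of $\Conv$.} First I would verify that $\Conv\colon \calV_k(\mathno)\to \calS_k(\mathno)$ is continuous on all of $\calV_k(\mathno)$ (not just the nondegenerate part). The convex hull of $\{x_0,\ldots,x_k\}\subset \mathno$ can be described via barycentric coordinates in the hyperboloid/projective model: it is the image of the standard simplex $\Delta_k$ under the map $\sum t_i e_i\mapsto [\sum t_i \widetilde x_i]$, where $\widetilde x_i$ are chosen lifts (this extends continuously to ideal vertices, in the projective model it is simply the projective convex hull). Hence if $x_i^{(m)}\to x_i$ in $\overline{\matH^n}$, then $\Conv(x_0^{(m)},\ldots,x_k^{(m)})$ converges to $\Conv(x_0,\ldots,x_k)$ in the Hausdorff topology, by uniform continuity on the compact parameter domain $\Delta_k$. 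Symmetry in the vertices makes this descend to the quotient $\calV_k(\mathno)$.

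\textbf{Step 2: bijectivity on the nondegenerate part.} An element of $\calS_k^*(\mathno)$ is a $k$-dimensional geodesic simplex $\Delta$, hence the convex hull of $k+1$ points in general position. These points can be recovered intrinsically as the extreme points of $\Delta$ (equivalently, the $0$-dimensional faces in its natural stratification), so they form a uniquely determined element of $\calV_k^*(\mathno)$. Thus $\Conv$ restricted to $\calV_k^*(\mathno)$ is injective. Surjectivity onto $\calS_k^*(\mathno)$ is immediate from the definition of a nondegenerate geodesic simplex.

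\textbf{Step 3: continuity of the inverse via a compactness argument.} This is the part where one has to be slightly careful, so it is the main obstacle. Note that $\calV_k(\mathno)$ is compact (it is the quotient of the compact space $(\overline{\matH^n})^{k+1}$ by the finite group $\mathfrak{S}_{k+1}$) and $\calS_k(\mathno)$ is Hausdorff (Hausdorff topology on closed subsets of a compact metric space is Hausdorff). Let $\Delta_m\to\Delta$ in $\calS_k^*(\mathno)$ and let $T_m\in\calV_k^*(\mathno)$ be the unordered vertex tuple of $\Delta_m$. By compactness of $\calV_k(\mathno)$, every subsequence of $T_m$ admits a further subsequence converging to some $T\in\calV_k(\mathno)$. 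By continuity of $\Conv$ (Step 1) we have $\Conv(T)=\Delta$; since $\Delta$ is nondegenerate, $T$ must lie in $\calV_k^*(\mathno)$, and by Step 2 $T$ is the unique vertex tuple of $\Delta$. Hence every convergent subsequence of $T_m$ has the same limit, so $T_m$ itself converges to this common limit. This shows the inverse of $\Conv|_{\calV_k^*}$ is sequentially continuous, and since the spaces involved are metrizable (as subspaces of compact metric spaces), sequential continuity suffices.

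The only delicate point is the extension of the continuity argument in Step 1 across ideal vertices; using the projective (Klein) model makes this routine, since convex hulls there are literally Euclidean convex hulls and depend Hausdorff-continuously on their vertex sets.
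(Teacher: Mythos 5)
Your argument is correct, and since the paper explicitly leaves this lemma to the reader there is no official proof to compare against; your three steps (Hausdorff-continuity of $\Conv$ on all of the compact space $\calV_k(\mathno)$ via the Klein model, injectivity via extreme points, and the subsequence/compactness argument for the inverse) constitute exactly the standard argument the authors presumably had in mind. The one point you flag as delicate — continuity across ideal vertices — is indeed handled cleanly by working in the projective model, where the geodesic convex hull is the Euclidean one and depends uniformly continuously on the (unordered) vertex tuple.
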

We are mainly interested in the behaviour of the function
$$
\Vol\colon \calS_k(\overline{\matH^n})\to\matR
$$
which maps every geodesic $k$-simplex into its $k$-dimensional volume. 

Despite its natural definition, the function
$\Vol$ is not continuous on the whole $\calS_k(\overline{\matH^n})$: for example,
let $K$ be any ideal regular simplex and 
$g\in{\rm Isom}(\matH^n)$ be a parabolic isometry that fixes an ideal vertex $p$ of $K$. 
Then $\lim_{i\to\infty} g^i(K) = \{p\}$ and therefore 
$$\lim_{i\to\infty} \Vol(g^i(K))=v_n\neq 0=\Vol \left(\lim_{i\to\infty} g^i(K)\right).$$ 
This shows that in general some care is needed in studying geometric properties of limits of simplices.
However, the following lemma  
ensures that the volume function is continuous on the space of nondegenerate simplices.

\begin{lemma}\label{cont-vol}
The restriction of
$\Vol$ to the set of simplices with at least three different vertices
is continuous. In particular, the restriction
$$
\Vol\colon \calS^*_k(\overline{\matH^n})\to\matR^+
$$
is continuous.
\end{lemma}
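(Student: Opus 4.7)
I would prove continuity by reducing it to a dominated-convergence argument on an integral computing the volume. Suppose $K_i\to K$ in Hausdorff topology within $\calS_k(\mathno)$, where $K$ has at least three distinct vertices. Passing to a subsequence and reindexing, the vertices $v_j^i$ of $K_i$ converge in $\mathno$ to vertices $v_j$ of $K$; by hypothesis at least three of the $v_j$, say $v_0,v_1,v_2$, are pairwise distinct. Since $\Vol$ is isometry-invariant, I can apply isometries of $\matH^n$ to $K_i$ to normalize $v_0,v_1,v_2$ at fixed standard positions in $\mathno$; this prevents the simplices from escaping \emph{coherently} to infinity in the manner of the parabolic example exhibited before the lemma.

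Next, parametrize each $K_i$ by a map $\phi_i\colon\Delta_k\to\mathno$ sending the $j$-th vertex of $\Delta_k$ to $v_j^i$ and extending by iterated geodesic convex combination. These maps converge pointwise on the interior of $\Delta_k$ to the analogous parametrization $\phi$ of $K$, so the pullbacks $\phi_i^\ast\,d\mu$ of the hyperbolic volume form $d\mu$ converge pointwise almost everywhere. Writing $\Vol(K_i)=\int_{\Delta_k}\phi_i^\ast\,d\mu$, continuity reduces to showing that these integrals converge to $\int_{\Delta_k}\phi^\ast\,d\mu=\Vol(K)$.

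To carry out this limit I would split each integral into a \emph{bulk} part and a \emph{cusp} part by removing, from each ideal vertex of $K$, a small horoball neighborhood. On the preimage of the bulk, a compact subregion of $\matH^n$, the pullbacks converge uniformly and the bulk volumes converge. The cusp parts, by a standard estimate on the volume of the intersection of an ideal simplex with a horoball, are bounded by a quantity that vanishes with the horoball radius, \emph{uniformly in $i$}: because $v_0,v_1,v_2$ are pinned at distinct positions, each cusp of $K_i$ is constrained to lie inside a ``thin'' region whose geometry is uniformly controlled. An $\varepsilon/3$ argument then yields $\Vol(K_i)\to\Vol(K)$. The main obstacle is precisely this uniform cusp estimate: the three-distinct-vertex hypothesis must be used to rule out the parabolic pathology where the whole simplex collapses to a single ideal point, and to verify that the residual geometry near each ideal vertex has uniformly controlled shape across the entire sequence.
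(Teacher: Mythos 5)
The paper does not prove this lemma directly: it observes that, via the homeomorphism $\Conv$ of Lemma~\ref{ovvio}, the statement is exactly Proposition 4.1 of Luo \cite{Luo} (see also \cite[Theorem 11.4.2]{Ratcliffe}), so the ``official'' proof is a one-line citation. Your plan --- parametrize the simplices, pass to the limit of the pulled-back volume forms, and control the contribution near the ideal vertices by a bulk/cusp splitting --- is essentially the argument carried out in those references, so the route is sound in principle.

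As written, however, the proposal stops at exactly the point where the work lies. Two smaller imprecisions first: (i) you cannot in general move $v_0^i,v_1^i,v_2^i$ to \emph{fixed} standard positions, since a triple of points of $\mathno$ has moduli (e.g.\ the mutual distances when some $v_j$ are finite); you can only arrange that the normalized triples converge to a fixed configuration --- and in fact no normalization is needed, because the hypothesis that the \emph{limit} tuple has three distinct points already rules out the parabolic collapse once one works with converging vertex tuples, as Lemma~\ref{ovvio} licenses. (ii) Pointwise convergence of the maps $\phi_i$ does not give pointwise convergence of the pullbacks $\phi_i^*\,d\mu$; one needs $C^1$ convergence, which does hold locally uniformly on the open simplex because the parametrizations depend analytically on the vertices, but this must be said. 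The genuine gap is the uniform cusp estimate, which you yourself call ``the main obstacle'' and then do not prove; the sentence about each cusp lying in a ``thin region whose geometry is uniformly controlled'' is a restatement of what is needed, not an argument. What must actually be shown is that for an ideal vertex $v$ of $K$, in an upper half-space model with $v=\infty$ and $B_T=\{x_n>T\}$, one has $\Vol(K_i\cap B_T)=O(T^{-(k-1)})$ uniformly in $i$; this follows because all vertices of $K_i$ except the one tending to $v$ remain in a fixed compact set, so that above height $T$ the simplex is trapped in a region of bounded horizontal Euclidean diameter, whose $k$-volume above height $T$ is a convergent tail. Until this domination is established, the $\varepsilon/3$ argument does not close. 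Since that estimate is precisely the content of the results of Luo and Ratcliffe, the honest options are to carry it out in detail or to cite them, as the paper does.
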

\begin{proof}
By Lemma~\ref{ovvio}, the conclusion is an immediate consequence  
of~\cite[Proposition 4.1]{Luo} (see also~\cite[Theorem 11.4.2]{Ratcliffe}).
\end{proof}

\subsection{The incenter and inradius of a simplex}
Lemma~\ref{cont-vol} implies in particular that if a sequence $K_i$ of elements in $\calS_k^*(\mathno)$ converges to an ideal regular $k$-simplex then $\Vol(K_i)\to v_k$ as $i\to\infty$. We are interested in proving the converse result:
the shape of a simplex with  large volume has to be similar to the shape of a regular ideal simplex. However, we have observed above
that a sequence of ideal regular simplices may well converge to a degenerate simplex, so some care is needed here.

Consider a nondegenerate $k$-simplex $K\in \calS_k^*(\overline{\matH^n})$. 
For every point $p\in K\cap\matH^n$ we denote by $r_K(p)$ the radius of the maximal $k$-ball of $H(K)$ centered in $p$ and contained
in $K$. 
Since the volume of any $k$-simplex is smaller than $v_k$ and
the volume of $k$-balls diverges as the radius diverges, there exists a constant $r_k>0$ such that
$r_K(p)\leqslant r_k$ for every $K\in \calS_k^*(\overline{\matH^n})$ and $p\in K$. 

\begin{defn}
Take $K\in \mathcal{S}^*_k(\mathno)$. 
The \emph{inradius} $r(K)$ of $K$ is 
$$
r(K)=\sup_{p\in K\cap \matH^n} r_K(p)\ \in \ (0,r_k]
$$ 
(observe that $r(K)>0$ since $K$ is nondegenerate).
The \emph{incenter} $\inc(K)$ is the unique point $p\in K\cap \matH^n$ such that $r_K(p)=r(K)$.
\end{defn}

\begin{lemma}\label{incentri}
The incenter is well-defined. The sphere centered in $\inc(K)$ of radius $r(K)$ is tangent to all the facets of $K$. The functions
$$
\inc\colon S_k^*({\mathno})\to \matH^n, \qquad
r\colon S_k^*({\mathno})\to\matR
$$
are continuous.
\end{lemma}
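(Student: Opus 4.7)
The plan is to establish the three assertions in order: (i) the incenter is well-defined, (ii) the inscribed sphere is tangent to every facet of $K$, and (iii) both $r$ and $\inc$ are continuous.

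For (i), I would begin by observing that for every $p\in K\cap\matH^n$
$$r_K(p)=\min_{i=0}^{k} d(p,H_i),$$
where $H_0,\dots,H_k$ are the totally geodesic hyperplanes of $H(K)$ containing the facets of $K$. Each distance function $d(\cdot,H_i)$ is continuous on $H(K)\cap\matH^n$ and strictly convex along any geodesic of $H(K)$ not contained in $H_i$. Consequently the superlevel sets $\{r_K\geqslant\rho\}=\bigcap_i\{d(\cdot,H_i)\geqslant\rho\}$ are convex, $r_K$ is continuous on $K\cap\matH^n$, and $r_K$ tends to $0$ as $p$ approaches $\partial K$ inside $\overline{\matH^n}$ (an ideal vertex of $K$ lies in the closure of exactly $k$ of the $H_i$'s, and $d(\cdot,H_i)$ vanishes at those ideal points). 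This yields existence of the incenter. For uniqueness, two distinct maximizers $p_1,p_2$ would, by convexity of the maximum superlevel set, force the entire geodesic segment $\gamma$ between them to satisfy $r_K(\gamma(t))=r(K)$; but each strictly convex function $t\mapsto d(\gamma(t),H_i)$ attains the value $r(K)$ at most twice, so the $k+1$ facets cannot cover an entire interval of parameters, a contradiction.

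For (ii), let $\vec m_i\in T_{\inc(K)}H(K)$ denote the unit gradient of $d(\cdot,H_i)$ at $p=\inc(K)$. The $k+1$ directions $\vec m_0,\dots,\vec m_k$ positively span $T_pH(K)\cong\matR^k$; this is visible in the Klein model, where $K$ becomes an ordinary Euclidean simplex and the statement reduces to the classical fact that the outward normals at an interior point of a Euclidean simplex positively span $\matR^k$. Hence there is a relation $\sum_i\alpha_i\vec m_i=0$ with all $\alpha_i>0$, and any $k$ of the $\vec m_i$ are linearly independent. Suppose for contradiction that the set $I$ of indices $i$ with $d(\inc(K),H_i)=r(K)$ is a strict subset of $\{0,\dots,k\}$ (it is nonempty by definition of $r(K)$). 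By the Farkas alternative there exists a direction $v\in T_pH(K)$ with $\langle v,\vec m_i\rangle>0$ for every $i\in I$; moving $\inc(K)$ a small amount along $v$ strictly increases the distance to every tight hyperplane while keeping the non-tight ones at distance greater than $r(K)$, contradicting maximality.

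For (iii), fix $K_0\in\calS_k^*(\mathno)$ and set $\rho=r(K_0)/2>0$. The set $C_0=\{q\in K_0\cap\matH^n : d(q,H_i(K_0))\geqslant\rho \text{ for all } i\}$ is a compact convex subset of $\matH^n$ containing $\inc(K_0)$ in its interior. Since the vertices (hence the facets, by Lemma~\ref{ovvio}) of a simplex depend continuously on the simplex in the Hausdorff topology, a standard compactness argument produces a neighborhood $U$ of $K_0$ and a compact $C\subset\matH^n$ such that $\{q\in K\cap\matH^n : d(q,H_i(K))\geqslant\rho \text{ for all } i\}\subseteq C$ for every $K\in U$. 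Lower semicontinuity of $r$, obtained by plugging $p=\inc(K_0)$ into $r_K$ as $K\to K_0$, yields $r(K)\geqslant\rho$ on a smaller neighborhood, whence $\inc(K)\in C$ there. Taking any sequence $K_m\to K_0$, extract a subsequence with $\inc(K_m)\to p_\infty\in C\cap K_0$ and $r(K_m)\to r_\infty$, and pass to the limit in $B(\inc(K_m),r(K_m))\subseteq K_m$ to obtain $B(p_\infty,r_\infty)\subseteq K_0$. Then $r_\infty\leqslant r_{K_0}(p_\infty)\leqslant r(K_0)$, which combined with lower semicontinuity gives $r(K_m)\to r(K_0)$ and $r_{K_0}(p_\infty)=r(K_0)$; uniqueness of the incenter then forces $p_\infty=\inc(K_0)$, completing the proof.

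The main obstacle is the compactness step in (iii): one must rule out $\inc(K_m)$ escaping to an ideal point of $\overline{\matH^n}$. This rests on the geometric fact that a hyperbolic ball of radius bounded below cannot sit arbitrarily far inside a cusp-shaped region of a nearby geodesic simplex, which is what provides the uniform compact localization of the sets $\{r_K\geqslant\rho\}$ as $K$ varies near $K_0$.
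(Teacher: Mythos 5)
Your overall architecture (existence and uniqueness via convexity of the superlevel sets of $r_K$, tangency via a first-order optimality/Farkas argument, continuity via a compactness-and-uniqueness extraction) is genuinely different from the paper's, which after the existence and tangency steps switches to an explicit computation in the hyperboloid model: there the incenter is characterized as the unique intersection of $\matH^n$ with the line $\bigcap_{i\neq j}H_{ij}$ determined by the dual vectors $q_i$ of the facets (with $\sinh d(w,H(F_i))=-\langle w,q_i\rangle$), and uniqueness and continuity drop out of that formula at once. Your existence and uniqueness arguments are essentially sound, modulo one overclaim: $t\mapsto d(\gamma(t),H_i)$ is \emph{not} strictly convex when $\gamma$ meets $H_i$ orthogonally (there it equals $|t-t_0|$); but since this function is never locally constant unless $\gamma\subseteq H_i$, your counting argument survives with plain convexity.

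The genuine gap is in the tangency step. The reduction ``visible in the Klein model, where the statement reduces to the classical fact about outward normals of a Euclidean simplex'' does not work: the Klein model is not conformal, so the unit gradients $\vec m_i$ of the hyperbolic distance functions at $\inc(K)$ are not the Euclidean normals of the Klein-model simplex, and both claims you need --- that the $\vec m_i$ positively span $T_pH(K)$ and that any $k$ of them are linearly independent (equivalently, that no proper subset of tight constraints can have $0$ in the convex hull of its gradients) --- are left unproved. These are exactly the assertions that would require the hyperboloid-model bookkeeping with the dual vectors $q_i$ that the paper performs elsewhere in its proof; the paper avoids the issue entirely for tangency by a softer monotonicity argument (if the inscribed sphere missed a facet $F$, push $p$ along the geodesic from the opposite vertex through $p$: the distance to every other facet increases, contradicting maximality). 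A second, smaller gap is the uniform compactness claim in your continuity step --- that $\{q\in K: d(q,H_i(K))\geqslant\rho\ \forall i\}$ stays in a fixed compact subset of $\matH^n$ as $K$ ranges over a neighbourhood of $K_0$. You correctly identify this as the crux (incenters must not escape into a cusp of a nearby simplex), but you only assert it; a quantitative estimate near the ideal vertices of $K_0$ (e.g.\ in an upper half-space model with an ideal vertex at infinity, where $\sinh d((x,t),H_i)\sim d_{\rm euc}(x,h_i)/t$) is needed to close the argument.
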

\begin{proof}
The map $p\mapsto r_K(p)$ is continuous, and if $q$ is a (possibly ideal) vertex
of $K$ we have $\lim_{p\to q} r_K(p)=0$. Therefore,
the map $r_K\colon K\to (0,r_k]$ is proper, and this ensures
the existence of a point $p\in K$ such that
$r_K(p)=r(K)$. 

Let $S_p$ be the sphere centered in $p$ of radius $r(K)$: we prove that $S_p$ is tangent to every $(k-1)$-face of $K$.
Assume by contradiction that  $F$ is a $(k-1)$-face of $K$ such that
$S_p\cap F=\emptyset$, and denote by $v$ the (possibly ideal) vertex of $K$ opposite to $F$.
Let $\gamma$ be 
the geodesic ray (or line, if $v$ is ideal) exiting from $v$ and containing $p$.
It is readily seen that the distance between $\gamma(t)$ and any $(k-1)$-face of $K$ distinct from $F$ is an increasing function of $t$. If $p=\gamma(t_0)$, then this implies that there
exists $\varepsilon>0$ such that $r_K(\gamma(t_0+\varepsilon))>r_K(p)=r(K)$, a contradiction.

We exploit the hyperboloid model of $\matH^n$ to determine the point $p$ more explicitly, \emph{i.e.}~we 
fix the identification 
$$\mathbb H^n=\big\{w=(w_0,\dots,w_n)\in\mathbb R^{n+1}:
\langle w,w\rangle=-1,\ w_0>0\big\}$$ 
where $\langle\cdot,\cdot\rangle$
denotes the usual Minkowski product.
Let $H$ be the $(k+1)$-dimensional linear subspace of $\matR^{n+1}$ containing $H(K)$.
If $F_0,\ldots,F_k$ are the $(k-1)$-faces of $K$, 
for every $i=0,\ldots,k$ we denote by $q_i$ the dual vector of $F_i$, \emph{i.e.}~
the unique vector $q_i\in H$ such that $\langle q_i,q_i\rangle=1$, 
$\langle q_i,w\rangle=0$ for every $w\in F_i$, and 
 $\langle q_i,w\rangle\leqslant 0$ for every $w\in K$. 
 
 If $w$ is any point of
$K$, then the hyperbolic distance 
between $w$ and the geodesic
$(k-1)$-plane containing $F_i$ satisfies the equality
$$\sinh d(w,H(F_i))=-\langle w,q_i\rangle.$$ 
Let now $H_{ij}\subseteq H$ be the hyperplane of $H$ which is orthogonal
to $q_i-q_j$. Recall that our point $p\in H$ lies at the same distance
from the geodesic planes containing the faces of $K$, so
\begin{equation}\label{inc:eq}
p\in \bigcap_{i\neq j} H_{ij}. 
\end{equation}
Since $K$ is nondegenerate, the vectors $q_0-q_i$, $i=1,\ldots,k$ are linearly independent,
and this readily implies that $\bigcap_{i\neq j}^k H_{ij}$ is a $1$-dimensional
linear subspace of $H$. Such a subspace cannot meet the hyperboloid
$\matH^n$ in more than one point, and this concludes the proof that $p$ is the
unique point of $K$ such that $r_K(p)=r(K)$. Moreover, we have
\begin{equation}\label{inr:eq}
\sinh r(K)=-\langle \inc(K),q_i\rangle\qquad \textrm{for\ every}\ i=0,\ldots,k.
\end{equation}

This description of $\inc(K)$ also implies that $\inc(K)$ and $r(K)$ continuously depend
on $K$. In fact, even when considering simplices with possibly ideal vertices,
it is readily seen that each subspace $H_{i}$, whence each $q_i$ and each $H_{ij}$, 
continuously depends on $K$. Thanks to
equations~\eqref{inc:eq} and~\eqref{inr:eq}, this implies that 
the maps $\inc\colon S_k^*(\mathno)\to \matH^n$
and $r\colon S_k^*({\mathno})\to\matR$
are continuous. 
\end{proof}

We now need the following result proved by Luo.

\begin{lemma}[Proposition 4.2 in \cite{Luo}]\label{liminf}
Let $K_i$ be a sequence of elements in $S_k^*(\mathno)$ such that
$
\lim_{i\to \infty} r(K_i)=0.
$
Then
$
\lim_{i\to \infty} \Vol(K_i)=0
$.
\end{lemma}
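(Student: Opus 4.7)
The plan is to prove a direct linear bound $\Vol(K)\leqslant C_k\, r(K)$ for every $K\in\mathcal{S}_k^*(\overline{\matH^n})$, where $C_k$ is a constant depending only on $k$; the conclusion of the lemma then follows immediately by taking $r(K_i)\to 0$.

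The starting observation is that, by the very definition of the inradius, every $x\in K$ satisfies $r(K)\geqslant\min_j d(x,H_j)$, where $H_j$ is the geodesic hyperplane in $H(K)$ containing the $j$-th facet of $K$. Hence
\[
K\;\subseteq\;\bigcup_{j=0}^{k}\bigl(K\cap N_{r(K)}(H_j)\bigr),
\]
and it suffices to bound the hyperbolic volume of each of the $k+1$ pieces. To do so, I would pass to the Fermi (warped-product) coordinates $(y,s)\in H_j\times[0,r(K)]$ on the side of $H_j$ containing $K$: in these coordinates the hyperbolic metric on $H(K)$ reads $ds^2+\cosh^2(s)\,g_{H_j}$, with volume element $\cosh^{k-1}(s)\,dV_{H_j}(y)\,ds$. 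Since a point $(y,s)$ can lie in $K\cap N_{r(K)}(H_j)$ only when $y\in\pi_j(K)$, where $\pi_j\colon\overline{\matH^n}\to\overline{H_j}$ denotes the orthogonal projection, Fubini at once gives
\[
\Vol\bigl(K\cap N_{r(K)}(H_j)\bigr)\leqslant \Vol_{k-1}(\pi_j(K))\int_0^{r(K)}\cosh^{k-1}(s)\,ds\leqslant \cosh^{k-1}(r_k)\,\Vol_{k-1}(\pi_j(K))\,r(K).
\]

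It then remains to bound $\Vol_{k-1}(\pi_j(K))$ by a constant depending only on $k$. The key geometric fact here is that in hyperbolic space the orthogonal projection of a geodesic segment to a hyperplane is again a (subsegment of a) geodesic, proved by observing that both endpoints of the projection lie on the geodesic $\Sigma\cap H_j$, where $\Sigma$ is the $2$-dimensional totally geodesic subspace generated by the segment and its common perpendicular with $H_j$. This makes $\pi_j$ preserve convex hulls, so that $\pi_j(K)$ is contained in $\Conv(\pi_j(v_0),\ldots,\pi_j(v_k))$, a convex geodesic polytope in $H_j$ with at most $k+1$ possibly ideal vertices. Such a polytope has uniformly bounded combinatorial complexity and can therefore be triangulated into a uniformly bounded number of $(k-1)$-dimensional geodesic simplices in $H_j$; applying Theorem~\ref{maximal:teo} to each of these simplices yields a universal bound $\Vol_{k-1}(\pi_j(K))\leqslant A_k$. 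Summing over the $k+1$ facets of $K$ gives the desired estimate $\Vol(K)\leqslant (k+1)A_k\cosh^{k-1}(r_k)\,r(K)=:C_k\,r(K)$.

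The main technical obstacle is the careful treatment of ideal vertices of $K$: one has to extend the orthogonal projection $\pi_j$ consistently to $\partial\matH^n$, verify the inclusion $\pi_j(K)\subseteq\Conv(\pi_j(v_0),\ldots,\pi_j(v_k))$ (which reduces to the geodesic-segment-preservation statement above, applied inductively over the iterated geodesic averages defining a convex hull), and check that the resulting convex polytope with possibly ideal vertices still triangulates into boundedly many finite-volume geodesic $(k-1)$-simplices. None of these steps is essentially difficult, but each requires some care.
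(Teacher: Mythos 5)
Your argument is essentially correct for $k\geqslant 3$, and note that the paper does not prove this lemma at all: it is quoted from Luo, so a self-contained quantitative bound of the form $\Vol(K)\leqslant C_k\, r(K)$ is a genuinely different (and more informative) route. The three ingredients all check out in that range: (i) for a convex polytope one has $r_K(x)=\min_j d(x,H_j)$, so $K$ is covered by the $r(K)$-collars of its facet hyperplanes; (ii) the Fermi volume element $\cosh^{k-1}(s)\,dV_{H_j}\,ds$ and the uniform bound $r(K)\leqslant r_k$ give the slab estimate; (iii) nearest-point projection onto a geodesic subspace $H_j=u^\perp\cap\matH^n$ is, in the hyperboloid model, the projectivization of the linear map $x\mapsto x-\langle x,u\rangle u$, so it is defined on all of $\overline{\matH^n}$, carries $\Conv(v_0,\ldots,v_k)$ onto $\Conv(\pi_j(v_0),\ldots,\pi_j(v_k))$, and disposes at once of the ideal-vertex issues you defer to the end (this is cleaner than the two-plane argument you sketch, which as written only locates the endpoints of the projected segment).

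There is, however, a genuine gap at $k=2$, and the lemma is stated for all $k\leqslant n$. Your universal bound $\Vol_{k-1}(\pi_j(K))\leqslant A_k$ rests on Theorem~\ref{maximal:teo} applied to $(k-1)$-simplices, and for $k=2$ the relevant constant is $v_1=\infty$: a $1$-dimensional geodesic simplex with an ideal endpoint has infinite length. Concretely, if a side $F_j$ of a triangle $K$ has two ideal endpoints then $\pi_j(K)\supseteq F_j=H_j$ is the whole geodesic, and your Fubini estimate for that slab reads $\infty\cdot r(K)$. (In fact no finite $A_2$ exists, since long thin triangles have arbitrarily long projections onto their base lines.) The case $k=2$ therefore needs a separate argument --- e.g.\ Gauss--Bonnet, or replacing the uniform bound on the sections by the actual section lengths $\ell_j(s)$, whose integral $\int_0^{r(K)}\ell_j(s)\cosh (s)\,ds$ still converges and tends to $0$. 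This does not affect the paper, which only invokes the lemma with $k=n\geqslant 3$, but as a proof of the statement as written it is incomplete without that case.
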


We can finally prove that simplices of large volume are close to ideal regular simplices.

\begin{prop}\label{regular-converge}
Let $K_\infty\in S_k^*(\mathno)$ be a fixed ideal regular simplex, and
let $K_i$ be a sequence of elements in $S_k^*(\mathno)$ such that
$$
\lim_{i\to\infty} \Vol (K_i)=v_k.
$$
Then there exists a sequence $g_i$ of isometries of $\matH^n$ such that 
$$
\lim_{i\to\infty} g_i(K_i)=K_{\infty}.
$$ 
\end{prop}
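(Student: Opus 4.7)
The plan is to combine the volume upper bound of Theorem~\ref{maximal:teo} with the inradius lower bound of Lemma~\ref{liminf} via a normalization argument that kills the $\mathrm{Isom}(\matH^n)$-freedom, followed by a compactness argument in $\mathno$.

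\emph{Step 1 (normalization).} Because $\Vol(K_i)\to v_k>0$, Lemma~\ref{liminf} forces $\liminf_i r(K_i)>0$: otherwise some subsequence would have $r(K_{i_j})\to 0$, hence $\Vol(K_{i_j})\to 0$, contradicting the hypothesis. Passing to a subsequence we may therefore assume $r(K_i)\to r_\infty\in(0,r_k]$. Fix a basepoint $x_0\in\matH^n$ and a $k$-dimensional geodesic subspace $V\subset\matH^n$ through $x_0$. Using the transitivity of $\mathrm{Isom}(\matH^n)$ on pointed orthonormal $k$-frames, I would pick $h_i\in\mathrm{Isom}(\matH^n)$ with $\inc(h_i(K_i))=x_0$ and $H(h_i(K_i))=V$. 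Each simplex $h_i(K_i)$ now sits in $\overline V\subset\mathno$ with incenter $x_0$ and inradius $r_i\to r_\infty$.

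\emph{Step 2 (compactness and non-degeneracy).} Since $\mathno$ is compact, after a further subsequence the vertex-tuples of $h_i(K_i)$ converge in $\calV_k(\mathno)$ to some $(v_0^\infty,\dots,v_k^\infty)$. The key claim is that this limit tuple belongs to $\calV_k^*(\mathno)$, that is, affinely spans $V$. To prove it I would work in the Klein model of $\matH^n$, placing $x_0$ at the origin so that the inscribed hyperbolic ball $B(x_0,r_i)\subseteq h_i(K_i)$ is simultaneously a Euclidean ball, of Euclidean diameter bounded below by a positive constant since $r_i\to r_\infty>0$. If the limit vertices lay in a $(k-1)$-dimensional affine subspace $W_0\subset\overline V$, then for every $\varepsilon>0$ and $i$ large the vertices of $h_i(K_i)$, and hence the simplex itself (as a Euclidean convex hull in the Klein picture), would lie in the Euclidean $\varepsilon$-neighborhood of $W_0$; this would squeeze a $k$-dimensional Euclidean ball of fixed size into an arbitrarily thin neighborhood of a $(k-1)$-dimensional subspace of $\overline V$, a contradiction. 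Hence the limit tuple is non-degenerate, and Lemma~\ref{ovvio} (continuity of $\Conv$ on $\calV_k^*$) yields $h_i(K_i)\to K':=\Conv(v_0^\infty,\dots,v_k^\infty)$ in $\calS^*_k(\mathno)$.

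\emph{Step 3 (maximality and full sequence).} By Lemma~\ref{cont-vol}, $\Vol(K')=v_k$, so Theorem~\ref{maximal:teo} forces $K'$ to be ideal and regular. Since $\mathrm{Isom}(\matH^n)$ acts transitively on ideal regular $k$-simplices (combine transitivity on $k$-dimensional geodesic subspaces with transitivity of $\mathrm{Isom}(\matH^k)$ on ideal regular $k$-simplices inside $\matH^k$), pick $\gamma\in\mathrm{Isom}(\matH^n)$ with $\gamma(K')=K_\infty$ and set $g_i:=\gamma\circ h_i$: along the chosen subsequence, $g_i(K_i)\to K_\infty$. To upgrade from a subsequence to the full sequence, define $f(K):=\inf_{g\in\mathrm{Isom}(\matH^n)}d_H(g(K),K_\infty)$, where $d_H$ denotes Hausdorff distance in $\mathno$. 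The argument above applies verbatim to every subsequence of $\{K_i\}$, so $f(K_i)\to 0$; choosing $g_i\in\mathrm{Isom}(\matH^n)$ with $d_H(g_i(K_i),K_\infty)<f(K_i)+1/i$ yields the desired full sequence.

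The hard part will be the non-degeneracy argument of Step 2. Because vertices may be ideal, one cannot naively equate the Hausdorff limit of the $h_i(K_i)$ with the convex hull of the limit vertices, and indeed the pathological example mentioned just before Lemma~\ref{cont-vol} (a sequence of regular ideal simplices collapsing to an ideal point under a parabolic) shows that this equality can fail in general. What rescues us is the inradius lower bound $r_i\to r_\infty>0$ obtained in Step 1, which prevents any such collapse and places us in the regime where Lemma~\ref{ovvio}, Lemma~\ref{cont-vol}, and Theorem~\ref{maximal:teo} apply cleanly.
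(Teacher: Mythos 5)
Your proof is correct and follows essentially the same route as the paper: normalize each $K_i$ by an isometry sending its incenter to a fixed point and its spanning $k$-plane to a fixed subspace (Lemma~\ref{incentri}), use Lemma~\ref{liminf} to bound the inradii away from zero and thereby rule out degeneration of the limit, then conclude with Lemma~\ref{cont-vol} and Theorem~\ref{maximal:teo}. The only differences are minor bookkeeping: the paper extracts Hausdorff limits of the simplices directly (the inscribed ball sits inside every $g_i(K_i)$, hence inside the limit) and secures convergence of the whole sequence to $K_\infty$ by choosing the normalizing isometries to minimize the distance to $K_\infty$, whereas you pass through vertex tuples and Lemma~\ref{ovvio} and recover the full sequence by a sub-subsequence argument with near-optimal isometries.
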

\begin{proof}
We consider 
the disc model for $\matH^n$ and suppose that the origin $O$ is the incenter of $K_\infty$. Let $H$ be the $k$-space containing $K_\infty$. We define the distance $d(K,K')$ of two simplices as the Hausdorff distance with respect to the Euclidean metric of the closed disc. For each $i$ we pick an isometry $g_i$ of $\matH^n$ such that:
\begin{enumerate}
\item $g_i(K_i)$ has its incenter in $O$ and is contained in $H$,
\item 
$g_i$ is chosen among all isometries $g_i$ satisfying (1) in such a way that
$g_i(K_i)$ has the smallest possible distance from $K_\infty$  (such a choice is possible 
since the set of isometries of $\matH^n$ taking $\inc(K_i)$ to $O$ and the geodesic subspace $H(K_i)$ into $H$ is 
homeomorphic to $O(k)$ and hence compact).
\end{enumerate}

Since $\calS_k(\mathno)$ is compact, in order to conclude it is sufficient to show that every converging subsequence of $g_i(K_i)$
converges to $K_\infty$. So, let us take a subsequence 
 which converges to some $k$-simplex $K_\infty'$. Lemma~\ref{liminf} ensures that the sequence of radii $r(K_i)$ is bounded below
 by a positive number, hence the intersection $\cap_ig_i(K_i)$ contains a $k$-ball $B\subset H$ centered in $O$. Therefore $B\subset K_\infty'$ and hence $K_\infty'$ is nondegenerate. We may now apply Lemma~\ref{cont-vol} and get 
$$\Vol (K_\infty')=\lim_{i\to\infty} \Vol(K_i)=v_k.
$$
By Theorem~\ref{maximal:teo}, the simplex $K_\infty'$ is ideal and regular, and assumption (2) easily implies that $K_\infty' = K_\infty$. 
This concludes the proof.
\end{proof}

It is now easy to prove that in big simplices the incenter of a face is uniformly distant from any other non-incident face.

\begin{lemma}\label{palle}
Let $n\geqslant 3$. There exist $\varepsilon_n>0$ and $\delta_n>0$ such
that the following holds for any simplex $\Delta\in \calS_n(\overline{\matH^n})$ 
with $\Vol(\Delta)\geqslant v_n(1-\varepsilon_n)$. Let $E$ be any face of $\Delta$ and $E'$ another face of $\Delta$ which does not contain $E$. Then
$$d(\inc(E), E') > 2\delta_n.$$
\end{lemma}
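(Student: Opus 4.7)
The plan is to argue by contradiction, reducing the problem to a combinatorial statement about the regular ideal $n$-simplex via Proposition~\ref{regular-converge}. If no such pair $(\varepsilon_n,\delta_n)$ existed, then there would be a sequence $\Delta_i\in\calS_n(\overline{\matH^n})$ with $\Vol(\Delta_i)\to v_n$, together with faces $E_i,E_i'$ of $\Delta_i$ such that $E_i\not\subseteq E_i'$ and $d(\inc(E_i),E_i')\to 0$. After passing to a subsequence I may assume that $\dim E_i$ and $\dim E_i'$ are constant and that the combinatorial position of the pair $(E_i,E_i')$ inside the vertex set of $\Delta_i$ is the same for all $i$.

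By Proposition~\ref{regular-converge} there exist isometries $g_i$ of $\matH^n$ with $g_i(\Delta_i)\to\Delta_\infty$ in $\calS_n(\overline{\matH^n})$, where $\Delta_\infty$ is a fixed regular ideal $n$-simplex. Lemma~\ref{ovvio} guarantees that the vertex tuples of $g_i(\Delta_i)$ converge in $\calV_n(\overline{\matH^n})$ to the vertex tuple of $\Delta_\infty$, so once a combinatorial matching of vertices is fixed, the faces $g_i(E_i)$ and $g_i(E_i')$ converge to faces $F$ and $F'$ of $\Delta_\infty$ with $F\not\subseteq F'$. Every face of $\Delta_\infty$ is itself a regular ideal simplex and therefore nondegenerate, so Lemma~\ref{incentri} yields $\inc(g_i(E_i))\to\inc(F)$ in $\matH^n$. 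Since isometries commute with taking the incenter, $\inc(g_i(E_i))=g_i(\inc(E_i))$, and combining the hypothesis $d(\inc(E_i),E_i')\to 0$ with the Hausdorff convergence $g_i(E_i')\to F'$ forces $\inc(F)\in F'$.

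The contradiction is then purely combinatorial. The intersection $F\cap F'$ is a face of $F$, and the assumption $F\not\subseteq F'$ makes it a \emph{proper} face of $F$. On the other hand, Lemma~\ref{incentri} asserts that the sphere of radius $r(F)>0$ centered at $\inc(F)$ is tangent to all facets of $F$, so $\inc(F)$ has positive hyperbolic distance from every facet of $F$ and hence lies in the relative interior of $F$ with respect to $H(F)$. Therefore $\inc(F)$ cannot lie on any proper face of $F$, contradicting $\inc(F)\in F\cap F'$.

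The main technical point I expect to handle carefully is the step that passes from Hausdorff convergence $g_i(E_i')\to F'$ in $\overline{\matH^n}$ to the conclusion $\inc(F)\in F'$: the face $F'$ may touch $\partial\matH^n$ at ideal vertices, so one cannot directly invoke continuity of the hyperbolic distance on $\matH^n$. However, $\inc(F)$ is an interior point of $\matH^n$, hence lies at positive Euclidean distance from $\partial\matH^n$; on a fixed Euclidean neighborhood of $\inc(F)$ the Euclidean and hyperbolic metrics are comparable, and Hausdorff convergence in $\overline{\matH^n}$ is enough to conclude. Once this technicality is dispatched, the argument above closes.
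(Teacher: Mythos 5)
Your proof is correct and follows essentially the same route as the paper's: a compactness argument via Proposition~\ref{regular-converge} together with the continuity statements of Lemmas~\ref{ovvio} and~\ref{incentri}, reducing the claim to the fact that in the regular ideal simplex the incenter of a face has positive distance from every non-incident face. The only difference is cosmetic — the paper fixes $3\delta_n$ in advance as that minimal distance in $\Delta^{\rm reg}$ and contradicts $d\leqslant 2\delta_n$ by continuity, whereas you let $d\to 0$ and then verify the positivity fact directly (via the tangency property of the insphere), a step the paper asserts without proof.
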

\begin{proof}
There is only one regular ideal $n$-dimensional simplex $\Delta^{\rm reg}$ up to isometries of $\mathno$. Let $3\delta_n>0$ be the minimal distance between $\inc(E)$ and $E'$ among all pairs of faces $E,E'$ of $\Delta^{\rm reg}$ such that $E\not\subseteq E'$. 

We claim that there is a constant $\varepsilon_n>0$ such that $d(\inc (E),E')>2\delta_n$ for any pair of faces $E\not\subseteq E'$ of any $n$-simplex $\Delta$ of volume bigger than $v_n(1-\varepsilon_n)$. Suppose by contradiction that there is a sequence $\Delta_i$ of $n$-simplices 
with $\lim_{i\to\infty} \Vol(\Delta_i)=v_n$, each $\Delta_i$ containing two faces $E_i \not\subseteq E_i'$ with $d(\inc (E), E') \leqslant 2\delta_n$. By Proposition~\ref{regular-converge}, up to replacing each $\Delta_i$ with an isometric copy we may assume that $\lim_{i\to\infty} \Delta_i=\Delta^{\rm reg}$, $\lim_{i\to\infty} E_i=E_*$, and $\lim_{i\to\infty} E'_i=E'_*$ for some faces $E_* \not\subseteq E_*'$ of $\Delta^{\rm reg}$.
Using Lemma~\ref{incentri}
we get
$$
\lim_{i\to\infty} d(\inc(E_i), E'_i) =
d(\inc(E_*),E_*') \geqslant 3\delta_n
$$ 
hence a contradiction.
\end{proof}

\subsection{Dihedral angles}
Let $\Delta\in \calS_n^*(\overline{\matH^n})$ be a nondegenerate $n$-simplex, and let $E$ be an $(n-2)$-dimensional face of $\Delta$. The \emph{dihedral angle}
$\alpha (\Delta,E)$ 
of $\Delta$ at $E$ is defined as usual in the following way: let $p$ be a point in $E\cap \matH^n$, and let $H\subseteq \matH^n$ be the unique $2$-dimensional geodesic plane which intersects orthogonally $E$ in $p$. We set $\alpha(\Delta,E)$ to be equal to the angle in $p$ of the polygon
$\Delta\cap H$ of $H\cong \matH^2$. It is easily seen that this is well-defined (\emph{i.e.}~independent of $p$).
For every $n\geqslant 3$, 
we denote by $\alpha_n$ the dihedral angle of the ideal regular $n$-dimensional simplex at any of its $(n-2)$-dimensional faces.

It is readily seen by intersecting the simplex with a horosphere
centered at any vertex that $\alpha_n$ equals the dihedral angle of
the regular  \emph{Euclidean} $(n-1)$-dimensional simplex at any of its
$(n-3)$-dimensional faces, so $\alpha_n=\arccos \frac 1{n-1}$. 
In particular, we have $\alpha_2=\arccos\frac{1}{2}=\pi/3$. Moreover,
it is easily checked that $\frac{2\pi}{6}<\arccos \frac{1}{3}<\frac{2\pi}{5}$ and
$\frac{2\pi}{5}<\arccos \frac{1}{n}<\frac{2\pi}{4}$ for every $n\geqslant 4$.
As a consequence, the real number $\frac{2\pi}{\alpha_n}$ is an integer if and only if $n=3$,
and if we denote by
$k_n\in\matN$, $n\geq 4$,  
the unique
integer such that 
$$k_n\alpha_n<2\pi<(k_n+1)\alpha_n,$$ 
then $k_n=5$ if $n=4$ and $k_n=4$ if $n\geqslant 5$.

\begin{lemma}\label{maximal-angle}
Let $n\geqslant 4$. Then, there exist $a_n>0$ 
and $\vare_n>0$, depending only on $n$, such that the 
following condition holds:
if $\Delta\in \calS_n^*(\mathno)$ is an $n$-simplex such that
$\Vol(\Delta)\geqslant (1-\vare_n)v_n$ and
$\alpha$ is the dihedral angle of $\Delta$
at any of its $(n-2)$-faces, then
$$
\frac{2\pi}{k_n+1}(1+a_n) < \alpha < \frac{2\pi}{k_n}(1-a_n).
$$
\end{lemma}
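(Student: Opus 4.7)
The plan is to argue by contradiction, deducing the inequality from the corresponding strict inequality at the regular ideal simplex via Proposition~\ref{regular-converge} and continuity of the dihedral angle. Since $n\geqslant 4$, the number $2\pi/\alpha_n$ is not an integer, so the definition of $k_n$ gives
$$
\frac{2\pi}{k_n+1}<\alpha_n<\frac{2\pi}{k_n}
$$
with strict inequalities. Choose once and for all an $a_n>0$ small enough that
$$
\frac{2\pi}{k_n+1}(1+2a_n)<\alpha_n<\frac{2\pi}{k_n}(1-2a_n).
$$
It then suffices to produce $\varepsilon_n>0$ such that $|\alpha-\alpha_n|<a_n\alpha_n$ for every dihedral angle $\alpha$ of every $\Delta\in\calS_n^*(\mathno)$ with $\Vol(\Delta)\geqslant(1-\varepsilon_n)v_n$.

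Assume no such $\varepsilon_n$ works. Then there exist simplices $\Delta_i\in\calS_n^*(\mathno)$ with $\Vol(\Delta_i)\to v_n$ and $(n-2)$-faces $E_i\subseteq\Delta_i$ whose dihedral angles $\alpha_i=\alpha(\Delta_i,E_i)$ satisfy $|\alpha_i-\alpha_n|\geqslant a_n\alpha_n$. Let $\Delta^{\rm reg}$ be a fixed regular ideal $n$-simplex. By Proposition~\ref{regular-converge}, after replacing each $\Delta_i$ with a suitable isometric copy we may assume $\Delta_i\to\Delta^{\rm reg}$ in $\calS_n(\mathno)$. Up to extracting a subsequence and relabeling the vertices of $\Delta_i$, we may further assume that each vertex of $\Delta_i$ converges to a prescribed vertex of $\Delta^{\rm reg}$. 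Then each $E_i$ converges to an $(n-2)$-face $E^*$ of $\Delta^{\rm reg}$, and the two $(n-1)$-faces $F_i,G_i$ of $\Delta_i$ meeting along $E_i$ converge to two distinct $(n-1)$-faces $F^*,G^*$ of $\Delta^{\rm reg}$ meeting along $E^*$. Since $n\geqslant 4$, each of $F^*,G^*$ has at least four distinct ideal vertices, hence is nondegenerate.

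To obtain the desired contradiction it remains to show $\alpha_i\to\alpha_n$. Working in the hyperboloid model exactly as in the proof of Lemma~\ref{incentri}, attach to each $(n-1)$-face $F$ of a nondegenerate $n$-simplex its outward Minkowski-unit dual vector $q_F\in\matR^{n+1}$; a standard computation shows that the dihedral angle between two such faces meeting transversely, with dual vectors $q$ and $q'$, equals $\arccos(-\langle q,q'\rangle)$. The vector $q_F$ depends continuously on $F$ as long as $F$ remains nondegenerate, since it is cut out by the continuously varying Minkowski-orthogonality conditions against the vertices of $F$ together with the normalization $\langle q_F,q_F\rangle=1$. Applying this to $F_i\to F^*$ and $G_i\to G^*$, continuity yields $\alpha_i\to\arccos(-\langle q_{F^*},q_{G^*}\rangle)=\alpha_n$, contradicting $|\alpha_i-\alpha_n|\geqslant a_n\alpha_n$.

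The main subtlety is this last continuity step. The direct definition of the dihedral angle uses an orthogonal $2$-plane through a point $p\in E\cap\matH^n$, and as the vertices of $E_i$ escape to infinity a naively chosen $p_i$ can degenerate, so one cannot simply invoke continuous dependence of the planar polygon angle on $p$. Reformulating the angle via dual vectors circumvents this issue: provided the two ambient $(n-1)$-faces stay nondegenerate, which here follows from $n\geqslant 4$ and the regularity of $\Delta^{\rm reg}$, the dual-vector formula extends continuously to configurations with ideal vertices in exactly the way already exploited for the incenter in Lemma~\ref{incentri}.
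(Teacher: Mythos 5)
Your argument is correct and follows essentially the same route as the paper, which likewise deduces the lemma from Proposition~\ref{regular-converge} together with the strict inequality $\frac{2\pi}{k_n+1}<\alpha_n<\frac{2\pi}{k_n}$ and the continuity of dihedral angles in the vertices --- a continuity the paper merely asserts as ``very easy'' and that you justify carefully via the Minkowski dual vectors of the $(n-1)$-faces, in the spirit of Lemma~\ref{incentri}, which is exactly the right way to handle the ideal limit. The only quibble is the constant bookkeeping: from $|\alpha-\alpha_n|<a_n\alpha_n$ and $\frac{2\pi}{k_n+1}(1+2a_n)<\alpha_n$ one gets $\alpha>\frac{2\pi}{k_n+1}(1+2a_n)(1-a_n)=\frac{2\pi}{k_n+1}(1+a_n-2a_n^2)$, which is slightly weaker than the claimed $\frac{2\pi}{k_n+1}(1+a_n)$ --- harmless, since the lemma only asserts the existence of some positive constant and your bound holds with $a_n(1-2a_n)$ in place of $a_n$.
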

\begin{proof}
It is very easy to show that the dihedral angles of a nondegenerate $n$-simplex
continuously depend on its vertices, so the conclusion follows from 
Proposition~\ref{regular-converge} and the fact that
$\frac{2\pi}{k_n+1} < \alpha_n < \frac{2\pi}{k_n}$. 
\end{proof}

\subsection{Proof Of Theorem~\ref{bah}}\label{proof:sub}
In this subsection we suppose that $M$ is a closed orientable hyperbolic manifold of dimension
$n\geqslant 4$. We will prove that there exists a constant $C_n<1$, only depending
on $n$, such that if $\Tt$ is any triangulation of $M$
with $|\Tt|$ simplices, then $\Vol(M)\leqslant C_n  v_n |\Tt|$.

Let us suppose that $|\Tt|=t$, and let us denote by $\sigma_1,\ldots,\sigma_t$ 
suitably chosen orientation-preserving parameterizations of
the simplices of $\Tt$. Let us fix positive constants $\vare_n,\delta_n$ and $a_n$ that
satisfy the conclusions of Lemma~\ref{palle} and Lemma~\ref{maximal-angle}.

Recall that an $n$-simplex of $\Tt$ is \emph{$\varepsilon$-big} if
$\algvol (\sigma) \geqslant (1-\varepsilon)v_n$. Let $t_b$ and $t_s$ be respectively the number of $\varepsilon_n$-big and $\varepsilon_n$-small simplices in $\Tt$, so that $t = t_b+ t_s$.
We begin with the following easy

\begin{lemma}\label{stima1a}
 Suppose that $t_s\geqslant \frac t{12}$. Then
$$
\Vol (M)\leqslant 
\left(1-\frac{\vare_n}{12}\right) t v_n.
$$
\end{lemma}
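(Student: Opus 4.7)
The plan is to combine the covering inequality from Lemma~\ref{algvol} with the dichotomy built into the definition of $\varepsilon_n$-small versus $\varepsilon_n$-big simplices, together with the universal upper bound $v_n$ on the volume of any geodesic $n$-simplex (Theorem~\ref{maximal:teo}). The inequality we are after is purely combinatorial once we have control on the volume of each straightened positive simplex, and the assumption $t_s\geqslant t/12$ gives us a linear fraction of the simplices with a strictly improved volume estimate.

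First I would invoke Lemma~\ref{algvol} to write
$$\Vol(M)\ \leqslant\ \sum_{\sigma_i\ \text{positive}} \Vol\bigl(|\sigma_i^{\stra}|\bigr)\ =\ \sum_{\sigma_i\ \text{positive}}\algvol(\sigma_i),$$
the second equality being the fact that $\algvol(\sigma_i)=\Vol(|\sigma_i^\stra|)$ on positive simplices. Next, observe that every $\varepsilon_n$-big simplex is automatically positive because $(1-\varepsilon_n)v_n>0$; hence the $t_b$ big simplices contribute at most $t_b\cdot v_n$ to the sum (using Theorem~\ref{maximal:teo}). The remaining positive simplices are all $\varepsilon_n$-small, so there are at most $t_s$ of them, and each one contributes at most $(1-\varepsilon_n)v_n$ by definition. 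Adding these two estimates gives
$$\Vol(M)\ \leqslant\ t_b\cdot v_n+t_s(1-\varepsilon_n)v_n\ =\ \bigl(t-\varepsilon_n t_s\bigr)v_n.$$

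Finally, plugging in the hypothesis $t_s\geqslant t/12$ yields
$$\Vol(M)\ \leqslant\ \Bigl(t-\frac{\varepsilon_n t}{12}\Bigr)v_n\ =\ \Bigl(1-\frac{\varepsilon_n}{12}\Bigr)\, t v_n,$$
which is the desired inequality. There is no real obstacle here: the lemma is the "easy" case of the proof of Theorem~\ref{bah} in which small simplices are already abundant enough to save the required fraction of volume by themselves, and the only non-trivial input is the restriction to positive simplices provided by Lemma~\ref{algvol}. The hard case, handled elsewhere, will be when $t_s<t/12$ and one must instead exploit the dihedral angle obstruction from Lemma~\ref{maximal-angle}.
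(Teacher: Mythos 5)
Your proof is correct and follows essentially the same route as the paper: restrict to positive simplices via Lemma~\ref{algvol}, bound the big ones by $v_n$ and the positive small ones by $(1-\varepsilon_n)v_n$, and plug in $t_s\geqslant t/12$. The only cosmetic point is that $\sum \Vol(|\sigma_i^{\stra}|)$ need only be $\leqslant$ (not $=$) $\sum\algvol(\sigma_i)$, since the image in $M$ may have smaller volume than the lift; this does not affect the argument.
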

\begin{proof}
Our assumption implies that $t_b+(1-\vare_n)t_s=t-\vare_n t_s\leqslant
(1-\frac{\vare_n}{12})t$. Moreover, 
if $\sigma_i$ is $\vare_n$-small, then 
either it is negative or 
$\vol (|\sigma_i^\stra|)\leqslant
(1-\vare_n)v_n$. Therefore 
Lemma~\ref{algvol} implies that
$$
\Vol(M)\leqslant \sum_{\textrm{positive}\ \sigma_i} \vol(|\sigma^\stra_i|)\leqslant v_n(t_b+(1-\vare_n)t_s)
\leqslant \left(1- \frac{\vare_n}{12}\right) tv_n.
$$
\end{proof}
Therefore if $t_s\geqslant \frac t{12}$ we are done: henceforth we assume that $t_s\leqslant \frac t{12}$.

If $E\subseteq M$ is an $(n-2)$-dimensional face of $\Tt$, we 
denote by $v(E)$ the number of $\vare_n$-big simplices (counted with multiplicities) of $\Tt$
which are incident to $E$.
We say that
$E$ is \emph{full} if $v(E)\geqslant k_n+1$, 
we denote by $\Full(\Tt)$ the set of full $(n-2)$-dimensional faces of
$\Tt$, and we set 
$$e_{\rm f}=| \Full(\Tt)|,\qquad N=\sum_{E\in \Full(\Tt)} v(E).$$ 

\begin{rem}\label{rem1}
Observe that if $E$ is full, then $\str(E)$ is a face of
a nondegenerate $n$-simplex, so it is itself nondegenerate. In particular,
the point $\inc(\str(E))$ is well-defined. 
On the other hand, Lemma~\ref{maximal-angle} implies that any \emph{non}full $(n-2)$-dimensional
face of $\Tt$ is incident to at least one $\vare_n$-small simplex.
Note that it is possible to construct triangulations containing full
$(n-2)$-dimensional faces incident to no $\vare_n$-small simplices.
In this case, the map $\str$ is locally a branched covering (whose degree grows with $\varepsilon_n^{-1}$). See Figure~\ref{degree:fig} for an example where $\str$
has local degree 2 at some $(n-2)$-dimensional face.
\end{rem}

Recall that $k_n=5$ if $n=4$ and $k_n=4$ if $n\geqslant 5$. For later purposes we point out the following:

\begin{lemma}\label{stimaN}
 We have
$$
N\geqslant 5t.
$$
\end{lemma}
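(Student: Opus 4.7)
The plan is to perform a double-count of incidences between $\varepsilon_n$-big simplices and their $(n-2)$-dimensional faces, separating the contribution coming from full faces (which is exactly $N$) from the contribution coming from non-full faces, and then to use Remark \ref{rem1} to bound the number of non-full faces in terms of $t_s$.

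First, I would recall that each $n$-simplex has exactly $\binom{n+1}{2}$ faces of dimension $n-2$. Counting pairs $(\sigma,E)$, where $\sigma$ is an $\varepsilon_n$-big simplex of $\Tt$ and $E$ is an $(n-2)$-face of $\sigma$, one obtains on the one hand the total $\binom{n+1}{2}t_b$, and on the other hand
\[
\binom{n+1}{2}t_b \;=\; \sum_{E\in\Full(\Tt)} v(E) \;+\; \sum_{E\notin\Full(\Tt)} v(E) \;=\; N + \sum_{E\notin\Full(\Tt)} v(E).
\]
By the definition of fullness, any non-full $(n-2)$-face $E$ satisfies $v(E)\leqslant k_n$, so $\sum_{E\notin\Full(\Tt)} v(E)\leqslant k_n\cdot e_{\rm nf}$, where $e_{\rm nf}$ denotes the number of non-full $(n-2)$-faces of $\Tt$.

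Next I would bound $e_{\rm nf}$ using Remark \ref{rem1}: every non-full $(n-2)$-face is incident to at least one $\varepsilon_n$-small simplex. Counting pairs $(\sigma,E)$ with $\sigma$ an $\varepsilon_n$-small simplex and $E$ a non-full $(n-2)$-face of $\sigma$, each such $E$ is counted at least once, while each small $\sigma$ contributes at most $\binom{n+1}{2}$ pairs. Hence $e_{\rm nf}\leqslant \binom{n+1}{2} t_s$. Substituting gives
\[
N \;\geqslant\; \binom{n+1}{2}\bigl(t_b - k_n t_s\bigr).
\]

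Finally I would plug in the assumption $t_s\leqslant t/12$ (so $t_b\geqslant 11t/12$) and the explicit values of $k_n$. For $n=4$ we have $k_n=5$ and $\binom{n+1}{2}=10$, yielding
\[
N\;\geqslant\; 10\cdot\Bigl(\tfrac{11t}{12}-\tfrac{5t}{12}\Bigr)\;=\;10\cdot\tfrac{t}{2}\;=\;5t.
\]
For $n\geqslant 5$ we have $k_n=4$ and $\binom{n+1}{2}\geqslant 15$, giving
\[
N\;\geqslant\;\binom{n+1}{2}\cdot\tfrac{7t}{12}\;\geqslant\;15\cdot\tfrac{7t}{12}\;=\;\tfrac{35 t}{4}\;>\;5t,
\]
so the desired inequality $N\geqslant 5t$ holds in every dimension $n\geqslant 4$. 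There is essentially no obstacle here; the only subtle point is the correct use of Remark \ref{rem1} to control $e_{\rm nf}$, which is precisely where the hypothesis on the dihedral angle (via $k_n$) enters through the definition of fullness.
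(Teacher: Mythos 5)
Your proof is correct and follows essentially the same route as the paper: a double count of incidences between $\varepsilon_n$-big simplices and their $(n-2)$-faces, the bound $e_{\rm nf}\leqslant \frac{n(n+1)}{2}t_s$ from Remark~\ref{rem1}, and the hypothesis $t_s\leqslant t/12$. The only cosmetic difference is that you use the sharp bound $v(E)\leqslant k_n$ on non-full faces and split into the cases $n=4$ and $n\geqslant 5$, whereas the paper uses the uniform bound $v(E)\leqslant 5$ and concludes directly that $N\geqslant \frac{n(n+1)}{4}t\geqslant 5t$.
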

\begin{proof}
Recall that the number of $(n-2)$-dimensional faces of an $n$-simplex is $\frac{n(n+1)}2$.
Let $e_{\rm nf}$ be the number of $(n-2)$-dimensional faces of $\Tt$ that are \emph{not} full. Remark~\ref{rem1}
implies that $e_{\rm nf}\leqslant \frac{n(n+1)t_s}2$. Moreover, by definition,
every $(n-2)$-dimensional face of $\Tt$ that is not full is incident to at most five
$\vare_n$-big simplices of $\Tt$ (counted with multiplicities), so
$$
t \frac{n(n+1)}{2}=
(t_b+t_s) \frac{n(n+1)}{2}\leqslant N+5e_{\rm nf}+t_s\frac{n(n+1)}{2}\leqslant 
N+3t_sn(n+1).
$$
Since $t_s\leqslant \frac t{12}$, this implies 
that $N\geqslant \frac {tn(n+1)}4$, whence the conclusion since $n\geqslant 4$.
\end{proof}

We now decompose $M$ into the union of three subsets $M_1,M_2,M_3$. The first subset $M_1$ consists of $\delta_n$-balls centered at the inradii of the (straightened) full faces: 
$$M_1=\bigcup_{E\in \Full(\Tt)} B\big(\inc(\str(E)),\delta_n\big).$$
The subset $M_2$ is the union of all $\varepsilon_n$-big (straightened) simplices minus $M_1$, and $M_3$ is the union of all the $\varepsilon_n$-small simplices: 
$$M_2=\str\left(\bigcup_{\varepsilon_n-{\rm big}\ \sigma} |\sigma| \right) \setminus M_1, \qquad M_3=\str\left(\bigcup_{\varepsilon_n-{\rm small}\ \sigma} |\sigma |\right).$$
Recall from Lemma~\ref{algvol} that every point of $M$ lies in $\str(|\sigma|)$ for some simplex $\sigma$ of $\Tt$
(in fact, $\sigma$ may also be chosen to be positive, but this is not relevant here).
Therefore, we have
\begin{equation}\label{somma}
\Vol(M)\leqslant \Vol (M_1)+\Vol (M_2)+\Vol (M_3).
\end{equation}
The reason for considering these three regions is roughly the following: if there are many $\varepsilon_n$-small simplices, some volume is ``lost'' in $M_3$; on the other hand, if there are many $\varepsilon_n$-big simplices they must wind and overlap a lot along the full faces of $\Tt$ and some volume is ``lost'' in $M_1$: in all cases the volume of $M$ will be strictly smaller than $C_ntv_n$ for some constant $C_n<1$.

Let us estimate $\Vol (M_i)$, $i=1,2,3$. We set $\eta_n=\Vol (B(p,\delta_n))$, 
where $p$ is any point of $\matH^n$.
We have of course
\begin{equation}\label{m1}
\Vol (M_1)\leqslant e_{\rm f} \eta_n.
\end{equation} 

Let now $\sigma$ be an $\vare_n$-big simplex of $\Tt$, and let $\nu$ be the number
of $(n-2)$-dimensional faces of $\sigma$ (considered as an abstract $n$-simplex)
which project into a full $(n-2)$-dimensional face of $\Tt$.
If $\widetilde{\sigma}^\stra$ is a lift of $\sigma^\stra$ to $\matH^n$, 
then by Lemma~\ref{palle} the hyperbolic balls of radius $\delta_n$
centered in the incenters of the $(n-2)$-dimensional faces of $|\widetilde{\sigma}^\stra|$
are pairwise disjoint. Moreover, each of these balls does not intersect
any $(n-1)$-dimensional face of $|\widetilde\sigma^\stra |$ that does not contain its center.
Together with Lemma~\ref{maximal-angle}, this implies that the volume of
$\str(|\sigma|)\setminus M_1$ is at most
$$
v_n- \nu \eta_n \frac{1+a_n}{k_n+1}.
$$
Summing up over all the $\vare_n$-big simplices of $\Tt$ we get
\begin{equation}\label{m2}
\Vol (M_2)
\leqslant   t_b  v_n -\eta_n (1+a_n) \frac{N}{k_n+1}.
\end{equation}
Finally, we obviously have
\begin{equation}\label{m3}
 \Vol(M_3)\leqslant t_s v_n.
\end{equation}
Putting together the inequalitites~\eqref{somma}, \eqref{m1}, \eqref{m2}, \eqref{m3} we get the inequality
\begin{equation}\label{stima1}
 \Vol (M)\leqslant t v_n+ \eta_n \left(e_{\rm f}-(1+a_n) \frac{N}{k_n+1}\right).
\end{equation}

We now conclude by considering separately the cases $e_{\rm f} \leqslant \frac t2$ and $e_{\rm f} \geqslant \frac t2$.

\begin{lemma}\label{stima2}
Suppose that $t_s\leqslant \frac t{12}$ and $e_{\rm f}\leqslant \frac t2$. Then
$$
\Vol(M)\leqslant t v_n \left( 1-\frac{\eta_n}{3v_n}\right).
$$
\end{lemma}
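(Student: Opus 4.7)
The plan is to plug the hypothesis $e_{\rm f}\leqslant t/2$ directly into inequality~\eqref{stima1}, using Lemma~\ref{stimaN} to bound $N$ from below. Recall that the assumption $t_s\leqslant t/12$ is exactly what powers Lemma~\ref{stimaN} (it enters through the estimate $e_{\rm nf}\leqslant \frac{n(n+1)t_s}{2}$ in its proof), so under our hypotheses we may use $N\geqslant 5t$.

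Next I would exploit the explicit determination of $k_n$ preceding Lemma~\ref{maximal-angle}: since $k_n=5$ when $n=4$ and $k_n=4$ when $n\geqslant 5$, we always have $k_n+1\leqslant 6$. Therefore
$$
(1+a_n)\frac{N}{k_n+1}\,\geqslant\, \frac{N}{k_n+1}\,\geqslant\, \frac{5t}{6}.
$$
Combined with the hypothesis $e_{\rm f}\leqslant t/2$, this gives
$$
e_{\rm f}-(1+a_n)\frac{N}{k_n+1}\,\leqslant\, \frac{t}{2}-\frac{5t}{6}\,=\,-\frac{t}{3}.
$$

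Substituting this bound into~\eqref{stima1} yields
$$
\Vol(M)\,\leqslant\, tv_n-\frac{\eta_n\, t}{3}\,=\,tv_n\left(1-\frac{\eta_n}{3v_n}\right),
$$
which is exactly the claimed inequality. There is no real obstacle here: the numerical miracle $\tfrac{5}{6}-\tfrac{1}{2}=\tfrac{1}{3}$ matches the constant appearing in the statement perfectly, and the positive contribution $\tfrac{5a_nt}{6}$ coming from the factor $(1+a_n)$ is simply discarded in order to keep the final constant clean. All the genuine geometric content has already been packaged into Lemma~\ref{stimaN} (which encodes how the $\varepsilon_n$-small simplices limit the deficiency in the count of $(n-2)$-faces) and the estimate~\eqref{stima1} (which in turn rests on Lemmas~\ref{palle} and~\ref{maximal-angle} about the shape of nearly-regular ideal simplices).
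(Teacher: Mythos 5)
Your proof is correct and is essentially identical to the paper's own argument: both plug $N\geqslant 5t$ (Lemma~\ref{stimaN}) and $k_n+1\leqslant 6$ into~\eqref{stima1}, drop the harmless factor $(1+a_n)\geqslant 1$, and use $e_{\rm f}\leqslant t/2$ to get the coefficient $\tfrac{1}{2}-\tfrac{5}{6}=-\tfrac{1}{3}$. Nothing to add.
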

\begin{proof}
Recall that $k_n+1\leqslant 6$ and 
$N\geqslant 5t$ (see Lemma~\ref{stimaN}), so our estimate~\eqref{stima1} yields
$$
\Vol(M)\leqslant t v_n+\eta_n\left(e_{\rm f}-\frac{5t}{6}\right)\leqslant
t v_n-\eta_n\frac{t}{3}=
t v_n \left(1-\frac{\eta_n}{3 v_n}\right).
$$
\end{proof}

\begin{lemma}\label{stima3}
  Suppose that $t_s\leqslant \frac t{12}$ and $e_{\rm f}\geqslant \frac t2$. Then
$$
\Vol(M)\leqslant t v_n \left( 1-\frac{a_n \eta_n}{2 v_n}\right).
$$
\end{lemma}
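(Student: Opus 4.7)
The plan is to substitute the hypothesis $e_{\rm f}\geqslant t/2$ into the master inequality \eqref{stima1} already established in the paper, but first to replace the quantity $N/(k_n+1)$ in that inequality by a lower bound expressed in terms of $e_{\rm f}$. This replacement is the whole content of the lemma; once it is in place the desired estimate falls out by pure arithmetic.

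The key observation is that, by the very definition of a \emph{full} face, every $E\in\Full(\Tt)$ contributes at least $k_n+1$ to the sum $N=\sum_{E\in\Full(\Tt)}v(E)$. Hence
$$N\;\geqslant\;(k_n+1)\,e_{\rm f},\qquad\text{i.e.}\qquad \frac{N}{k_n+1}\;\geqslant\;e_{\rm f}.$$
Plugging this into \eqref{stima1} we obtain
$$\Vol(M)\;\leqslant\; tv_n+\eta_n\!\left(e_{\rm f}-(1+a_n)\,e_{\rm f}\right)\;=\; tv_n-a_n\eta_n\, e_{\rm f}.$$
Using now the hypothesis $e_{\rm f}\geqslant t/2$ we conclude that
$$\Vol(M)\;\leqslant\; tv_n-\frac{a_n\eta_n}{2}\,t\;=\; tv_n\left(1-\frac{a_n\eta_n}{2v_n}\right),$$
which is the claimed inequality.

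There is essentially no obstacle in this step: the combinatorial meat of the argument was already consumed in setting up \eqref{stima1} (which required Lemma~\ref{palle}, Lemma~\ref{maximal-angle}, and the careful partition $M=M_1\cup M_2\cup M_3$), and in the companion bound $N\geqslant 5t$ of Lemma~\ref{stimaN}. The only point to double-check is that the hypothesis $t_s\leqslant t/12$, although not explicitly invoked in the two displayed lines above, is needed to guarantee that \eqref{stima1} holds in the regime we are considering (through Lemma~\ref{stimaN} and the decomposition of $M$); this is why the statement of the lemma carries it as an assumption even though the final arithmetic does not refer to it directly.
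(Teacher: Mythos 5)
Your proof is correct and is essentially identical to the paper's: both use $N\geqslant (k_n+1)e_{\rm f}$ (immediate from the definition of a full face) in \eqref{stima1} and then apply $e_{\rm f}\geqslant t/2$. One minor remark: the hypothesis $t_s\leqslant t/12$ is in fact not needed for \eqref{stima1} itself (nor via Lemma~\ref{stimaN}, which is only used in Lemma~\ref{stima2}); it appears in the statement merely because the overall proof of Theorem~\ref{bah} proceeds by cases and this lemma handles one of them.
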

\begin{proof}
 Recall that every full $(n-2)$-dimensional face is incident to at least $k_n+1$ $\vare_n$-big simplices
of $\Tt$, so $N\geqslant (k_n+1)e_{\rm f}$. Plugging this inequality into~\eqref{stima1} we get
$$
\Vol(M)\leqslant tv_n+\eta_n(e_{\rm f}-(1+a_n)e_{\rm f})=tv_n-a_n\eta_ne_{\rm f}\leqslant 
tv_n-\frac{a_n\eta_nt}{2}.
$$
\end{proof}

We can summarize the results proved in Lemmas~\ref{stima1a}, \ref{stima2}, and 
\ref{stima3} in the following
statement, which provides a quantitative version of Theorem~\ref{bah}.

\begin{teo}
Let $M$ be a closed orientable hyperbolic manifold of dimension $n\geqslant 4$, and let
$$
C_n=\max \left\{1-\frac{\vare_n}{12}, 1-\frac{\eta_n}{3v_n}, 1-\frac{a_n\eta_n}{2v_n}\right\} < 1.$$
Then
$$
\Vol (M)\leqslant C_n v_n \sigma(M).
$$
\end{teo}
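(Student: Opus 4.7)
The plan is to observe that the statement is just the combined summary of the three case analyses already carried out in Lemmas~\ref{stima1a}, \ref{stima2}, and \ref{stima3}, so the only thing left to do is to verify that the three cases are exhaustive and that each one yields a bound of the required form.

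First I would fix an arbitrary triangulation $\Tt$ of $M$ with $t = \sigma(M)$ simplices, and recall the notation: $t_s$ is the number of $\varepsilon_n$-small simplices and $e_{\rm f}$ is the number of full $(n-2)$-dimensional faces of $\Tt$. The three lemmas treat the three mutually exclusive alternatives:
\begin{itemize}
\item[(a)] $t_s \geqslant t/12$, handled by Lemma~\ref{stima1a};
\item[(b)] $t_s \leqslant t/12$ and $e_{\rm f} \leqslant t/2$, handled by Lemma~\ref{stima2};
\item[(c)] $t_s \leqslant t/12$ and $e_{\rm f} \geqslant t/2$, handled by Lemma~\ref{stima3}.
\end{itemize}
In each case the conclusion is an upper bound on $\Vol(M)$ of the form $(1-\kappa) t v_n$ with $\kappa$ equal to one of $\varepsilon_n/12$, $\eta_n/(3 v_n)$, or $a_n \eta_n/(2 v_n)$ respectively. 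Taking $C_n$ to be the maximum of the three corresponding factors $1-\kappa$ yields $\Vol(M) \leqslant C_n v_n t = C_n v_n \sigma(M)$ as required.

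The only remaining point is the strict inequality $C_n < 1$, which follows since all three of $\varepsilon_n$, $\eta_n$, and $a_n$ are strictly positive universal constants depending only on $n$: $\varepsilon_n$ and $a_n$ come from Lemmas~\ref{palle} and~\ref{maximal-angle}, while $\eta_n = \Vol(B(p,\delta_n)) > 0$ is the volume of a hyperbolic ball of the positive radius $\delta_n$ supplied by Lemma~\ref{palle}. Since no obstacle remains — the three lemmas already do all the geometric work — the proof is essentially a one-line assembly, and there is no main obstacle to overcome here beyond the bookkeeping of constants.
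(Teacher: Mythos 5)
Your proposal is correct and is exactly the paper's own argument: the theorem is stated there precisely as the summary of Lemmas~\ref{stima1a}, \ref{stima2}, and \ref{stima3}, whose hypotheses exhaust all cases, and the positivity of $\vare_n$, $a_n$, $\delta_n$ (hence of $\eta_n$) gives $C_n<1$. Nothing further is needed.
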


\section{Stable complexity} \label{complexity:section}
As anticipated in the introduction, by replacing triangulations with spines we get another characteristic number $c_\infty$ which equals $\sigma_\infty$ on any irreducible 3-manifold with infinite fundamental group, but which is better-behaved and closer to the simplicial volume in many cases (see \emph{e.g.}~Propositions~\ref{surface2} and \ref{elliptic}). We define here the characteristic number $c_\infty$ and prove some basic properties.

\subsection{Complexity}
The complexity $c(M)$ of a compact manifold $M$ was defined by Matveev \cite{Mat} in dimension 3 and generalized by the last author in all dimensions \cite{Mar}. We recall briefly its definition.

Let $\Delta = \Delta_{n+1}$ be the $(n+1)$-simplex and $\Pi^n$ be the cone over the $(n-1)$-skeleton of $\Delta$. The polyhedron $\Pi^n_k = \Pi^{n-k}\times D^k$ has a \emph{center} $c=(d,0)$ with $d\in\Pi^{n-k}$ being the center of the cone. A compact $(n-1)$-dimensional polyhedron $X$ is \emph{simple} if every point $x$ of $X$ has a star neighborhood PL-homeomorphic to $\Pi^n_k$, via a homeomorphism that sends $x$ to $c$.

\begin{figure}
 \begin{center}
  \includegraphics[width = 9 cm]{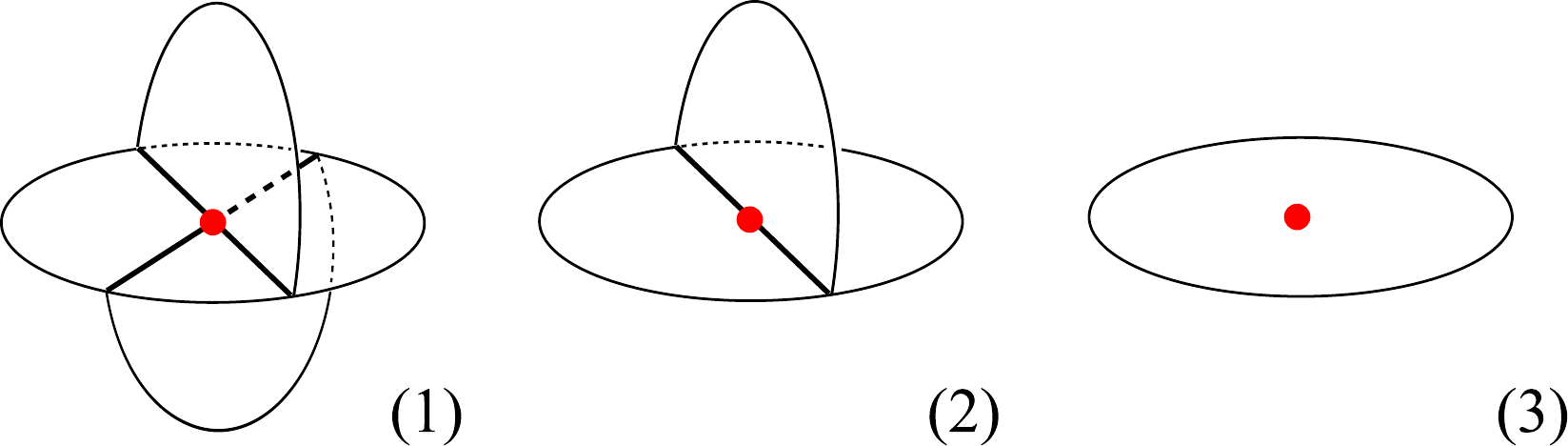}
 \end{center}
 \nota{Neighborhoods of points in a simple polyhedron.}
 \label{models:fig}
\end{figure}

In a simple 2-dimensional polyhedron every point has a neighborhood of one of the three types shown in Fig.~\ref{models:fig}. Points of type (1) are called \emph{vertices}. The points of type (2) and (3) form respectively some manifolds of dimension 1 and 2: their connected components are called respectively \emph{edges} and \emph{regions}. Note that an edge can be a circle and a region can be an arbitrary (connected) surface. A simple $n$-dimensional polyhedron is stratified similarly.

Let $M$ be a compact $n$-manifold, possibly with boundary. A subpolyhedron $X\subset \interior M$ is a \emph{spine} of $M$ 
if $M\setminus X$ consists of an open collar of $\partial M$ and some (possibly none) open balls. 

\begin{defn}
The \emph{complexity} $c(M)$ of $M$ is the minimal number of vertices in a simple spine for $M$.
\end{defn}

The following facts, already proved in \cite{Mat, Mar}, are immediate.

\begin{teo} \label{immediate:teo}
The following inequalities hold:
\begin{itemize}
\item $c(M)\leqslant \sigma(M)$ for any closed manifold $M$,  
\item $c(M)\leqslant d\cdot c(N)$ for any finite covering $M\stackrel d\to N$ of compact manifolds.
\end{itemize}
\end{teo}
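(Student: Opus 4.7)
Both inequalities are proved by explicit constructions; what requires attention in each case is the verification that the resulting polyhedron is a simple spine.

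\emph{First inequality.} Starting from a triangulation $\mathcal{T}$ of the closed manifold $M$ with $t=\sigma(M)$ simplices, I would pass to the barycentric subdivision and let $X\subseteq M$ be the union of all \emph{dual} cells of positive codimension, i.e.\ the dual $(n-1)$-skeleton. Its complement $M\setminus X$ is the disjoint union of the open dual $n$-cells, one per vertex of $\mathcal{T}$, each being the open star of the vertex in the subdivision and thus an open $n$-ball. Hence $X$ is a spine of $M$. The key point is that $X$ is \emph{simple}: for each face $\sigma$ of $\mathcal{T}$ of dimension $k$, a neighborhood in $X$ of the barycenter $\widehat{\sigma}$ splits as the product of a small $k$-disc along $\sigma$ with the cone on the dual codimension-one skeleton of the link of $\sigma$, which matches precisely the local model $\Pi^n_k$. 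In particular the vertices of $X$ correspond bijectively to the $n$-simplices of $\mathcal{T}$, so $c(M)\leqslant t=\sigma(M)$.

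\emph{Second inequality.} Given a $d$-fold covering $p\colon M\to N$ and a simple spine $Y\subseteq N$ with $c(N)$ vertices, I would set $\widetilde{Y}=p^{-1}(Y)$. Since $p$ is a local homeomorphism sending $\widetilde{Y}$ onto $Y$, every point of $\widetilde{Y}$ has a star neighborhood PL-homeomorphic to that of its image, so $\widetilde{Y}$ is a simple polyhedron whose vertices are exactly the $d$-fold preimages of the vertices of $Y$. To check that $\widetilde{Y}$ is a spine, one uses $M\setminus\widetilde{Y}=p^{-1}(N\setminus Y)$: the preimage of the open collar of $\partial N$ is an open collar of $\partial M$ (the covering restricts to one on the boundary), and the preimage of each simply connected open $n$-ball in $N\setminus Y$ lifts to a disjoint union of $d$ open $n$-balls in $M$. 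This gives $c(M)\leqslant d\cdot c(N)$.

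\emph{Main obstacle.} The only nontrivial point is the combinatorial check in the first part that the dual skeleton inherits the correct local model $\Pi^n_k$ at the barycenter of every face; once this is recognized by identifying the link of a face in the standard $n$-simplex with the appropriate $\Pi^{n-k}$, both statements reduce to transparent constructions.
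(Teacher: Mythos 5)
Your proposal is correct and follows exactly the paper's argument: the first inequality comes from dualizing the triangulation to get a simple spine with one vertex at the barycenter of each $n$-simplex, and the second from taking the preimage of a minimal simple spine under the covering map. You simply supply the local verifications (the $\Pi^n_k$ models at barycenters of faces, and the behaviour of collars and complementary balls under the covering) that the paper leaves implicit.
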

\begin{proof}
By dualizing a triangulation $\calT$ of $M$ we get a simple spine of $M$ with one vertex at the barycenter of each simplex of $\calT$, hence $c(M)\leqslant \sigma (M)$.  The preimage of a simple spine of $N$ along the covering map is a simple spine of $M$ with $d$ vertices lying above each vertex of $N$, hence $c(M) \leqslant d\cdot c (N)$.
\end{proof}

We summarize the properties of $c$ in dimension 3 that we will need below. If $F\subset \interior{M}$ is a closed surface in the interior of a compact 3-manifold $M$ we denote by $M/\!/F$ the manifold $M$ with an open tubular neighborhood of $F$ removed. 

\begin{teo} [Matveev, \cite{Mat}]\label{matveev:teo}
The complexity $c$ of compact orientable 3-manifolds satisfies the following properties:
\begin{itemize}
\item $c(M) = \sigma(M)$ for any closed irreducible 3-manifold $M$ distinct from $S^3$, $\matRP^3$, and $L(3,1)$,
\item $c(M\#N) = c(M)+c(N)$ for any compact 3-manifolds $M$ and $N$,
\item  $c(M/\!/F)\leqslant c(M)$ for any irreducible compact 3-manifold $M$ and any incompressible closed surface $F\subset \interior{M}$.
\end{itemize}
\end{teo}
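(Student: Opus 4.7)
The plan is to establish the three properties in order, exploiting the flexibility of simple spines under standard local moves that preserve the number of vertices. Throughout I would rely on the dualization procedure that, given a special spine (every region an open disk, every edge trivalent, every vertex the cone over the $1$-skeleton of a tetrahedron), produces a one-vertex pseudo-triangulation with one tetrahedron per vertex of the spine.

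For property (1), the inequality $c(M)\leqslant \sigma(M)$ is already part of Theorem \ref{immediate:teo}. For the reverse direction I would begin with a minimal simple spine $P$ of $M$ and upgrade it to a special spine without increasing the vertex count. The key technical ingredient is that, under the hypotheses that $M$ is closed and irreducible and is not one of $S^3$, $\matRP^3$, $L(3,1)$, every non-special feature of a minimal simple spine can be eliminated via Matveev's local moves (lune moves, $0$-$2$ moves, and the $T$-move) while keeping the number of true vertices the same, or while strictly decreasing it and contradicting minimality. Once $P$ is special, dualizing it yields a one-vertex triangulation of $M$ with exactly $c(M)$ tetrahedra, and hence $\sigma(M)\leqslant c(M)$. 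The three excluded manifolds are precisely those admitting simple spines with zero vertices (a point, $\matRP^2$, and a similar low-complexity polyhedron), for which the dualization cannot be performed.

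For property (2), the inequality $c(M\# N)\leqslant c(M)+c(N)$ is the easy direction: take minimal simple spines $P_M\subset M$ and $P_N\subset N$, place them in the two summands away from the connected-sum sphere, and wedge them via a short arc; the result is a simple spine of $M\# N$ with $c(M)+c(N)$ vertices. For the reverse inequality, I would take a minimal simple spine $P$ of $M\# N$ together with a 2-sphere $S$ realizing the decomposition, put $S$ in general position with $P$, and apply innermost-disk and irreducibility arguments to arrange that $S\cap P$ is a disjoint union of circles disjoint from the vertices of $P$. Cutting $P$ along $S$ and capping off each circle with a disk then produces simple spines of the two summands whose combined vertex count equals that of $P$.

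For property (3), I would start from a minimal simple spine $P$ of $M$ and isotope $F$ into a convenient position relative to $P$. Using incompressibility of $F$ together with irreducibility of $M$, successive disk-exchange moves remove intersection circles from $F\cap P$ without creating new vertices, and one arranges $F$ to lie in a thin regular neighborhood of $P$. Cutting $M$ along $F$ and restricting (a slight modification of) $P$ to each piece produces a simple spine of $M/\!/F$ whose vertices form a subset of the vertices of $P$, giving the claimed inequality.

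The main obstacle, and the heart of the proof, is the reduction to special spines in (1): the entire argument depends on a delicate combinatorial calculus of local moves showing that vertex count is monotone under them, and on isolating exactly the low-complexity exceptions $S^3$, $\matRP^3$, $L(3,1)$ where no special minimal spine exists. Once this calculus is in place, properties (2) and (3) become natural consequences of the same move-based machinery applied to spines intersecting spheres and incompressible surfaces respectively.
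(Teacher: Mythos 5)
The paper does not prove this statement: it is quoted directly from Matveev \cite{Mat} (see also \cite{MP, Mat:book, Mar}) and used as a black box, so there is no in-paper argument to compare yours against. Your outline does follow the standard architecture of Matveev's theory --- dualizing special spines to (loose) triangulations for the first item, and cutting spines along spheres and incompressible surfaces for the other two --- but as a proof it has concrete gaps at exactly the points where the real work lies.

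First, in item (1) the reduction of a minimal spine to a special one is not accomplished by the lune/$0$--$2$ and $T$ moves (note in passing that the lune move and the $0$--$2$ move are the same move): that calculus relates different \emph{special} spines of the same manifold. What is actually needed is Matveev's theorem that a minimal almost simple spine of a closed irreducible orientable $3$-manifold of positive complexity is automatically special; its proof is a separate collapsing/surgery argument (if some region is not a disk, or the spine fails to be cellular, one either produces a spine with fewer vertices or concludes $M\in\{S^3,\matRP^3,L(3,1)\}$ using irreducibility). Invoking ``local moves that preserve the vertex count'' does not supply this. Second, in items (2) and (3) general position is not enough: every transverse intersection point of $S$ (or $F$) with a triple line of $P$ becomes a true vertex of $P\cup S$, so ``cutting $P$ along $S$ and capping with disks'' a priori yields spines of the pieces with \emph{more} vertices than $P$. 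The actual argument normalizes the surface with respect to the handle decomposition dual to the spine and tracks how a normal surface traverses the balls, beams and plates; this is where irreducibility and incompressibility really enter, and it is the step your innermost-disk sketch leaves unjustified. (A smaller point: joining two simple spines by an arc produces an almost simple, not simple, polyhedron in the sense of the paper's definition, so even the easy inequality $c(M\# N)\leqslant c(M)+c(N)$ requires either Matveev's almost simple framework or a modified join.)
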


\subsection{Stable complexity}
Theorem \ref{immediate:teo} says that $c(M)\leqslant d\cdot c(N)$ for any finite covering $M\stackrel d\to N$ between compact manifolds \cite{Mat, Mar}. We can then mimic the construction of $\sigma_\infty$ and define the \emph{stable complexity} $c_\infty(M)$ of a compact manifold $M$ as
$$c_\infty (M) = \inf_{\widetilde M \stackrel d\to M} \left\{\frac{c(\widetilde M)}d\right\}.$$
The stable complexity is of course a characteristic number and we get 
$$c_\infty(M) \leqslant \sigma_\infty (M)$$ 
for any closed manifold $M$ by Theorem \ref{immediate:teo}.

The following result refines Proposition~\ref{easy:prop}.
\begin{prop} \label{smaller:prop}
Let $M$ be a closed $n$-manifold and suppose that $\pi_1(M)$ is virtually torsion-free
(this condition is automatically satisfied if $n=2$ and if $n=3$ thanks to geometrization).
Then
$$\|M\| \leqslant c_\infty (M) \leqslant \sigma_\infty (M).$$
\end{prop}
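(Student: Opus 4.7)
The right-hand inequality $c_\infty(M)\leqslant \sigma_\infty(M)$ is immediate from Theorem~\ref{immediate:teo}: for every finite covering $\widetilde M\stackrel d\to M$ one has $c(\widetilde M)\leqslant \sigma(\widetilde M)$, so dividing by $d$ and taking the infimum over all finite covers preserves the inequality.

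For the left-hand inequality, the plan is to reduce to the following core statement: if $N$ is any closed orientable $n$-manifold with torsion-free fundamental group, then $\|N\|\leqslant c(N)$. Granting this, I would proceed as follows. By virtual torsion-freeness of $\pi_1(M)$, there is a finite covering $M_0\stackrel{d_0}{\to} M$ with $\pi_1(M_0)$ torsion-free; any further finite covering $\widetilde N\to M_0$ has $\pi_1(\widetilde N)$ torsion-free as well (being a subgroup of the torsion-free group $\pi_1(M_0)$), so the core statement yields $\|\widetilde N\|\leqslant c(\widetilde N)$. Since $\|\cdot\|$ is a characteristic number, for every such $\widetilde N$
\[
\|M\|=\frac{\|M_0\|}{d_0}=\frac{\|\widetilde N\|}{d_0\,[\pi_1(M_0):\pi_1(\widetilde N)]}\leqslant \frac{c(\widetilde N)}{d_0\,[\pi_1(M_0):\pi_1(\widetilde N)]}.
\]
Taking the infimum on the right over all $\widetilde N\to M_0$ gives $\|M\|\leqslant c_\infty(M_0)/d_0=c_\infty(M)$, where the last equality uses that $c_\infty$ is a characteristic number.

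The core step, $\|N\|\leqslant c(N)$ for closed $N$ with torsion-free $\pi_1$, would be proved by imitating the argument for $\|N\|\leqslant \sigma(N)$ given in Subsection~\ref{easy:sub}. Starting from a simple spine $X\subset N$ with $v=c(N)$ vertices, the plan is to dualize: the standard local model at each vertex $x$ of $X$ determines an $n$-simplex $\sigma_x\colon \Delta_n\to N$, and the codimension-one strata of $X$ induce pairings of the $(n-1)$-faces of these dual simplices. This realizes $N$ as a loose pseudo-triangulation with $v$ top-dimensional simplices; applying the $\alt$ operator to the $\sigma_x$'s and summing, exactly as in the proof of Proposition~\ref{easy:prop}, then yields a singular cycle representing $[N]$ whose $\ell^1$-norm is at most $v$.

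The hard part will be verifying that the dual construction does produce a genuine pseudo-triangulation: the pairings along codimension-one strata are clear, but around lower-dimensional strata of $X$ one must rule out the monodromy that would otherwise prevent the dual simplices from gluing coherently, and the torsion-freeness of $\pi_1(N)$ is what should exclude this obstruction. As a sanity check, in dimension $n=3$ the core step is independently available: Matveev's equality $c(N)=\sigma(N)$ from Theorem~\ref{matveev:teo} applies to every closed irreducible $N$ with torsion-free infinite $\pi_1$, reducing $\|N\|\leqslant c(N)$ to the already-proved $\|N\|\leqslant \sigma(N)$ of Proposition~\ref{easy:prop}.
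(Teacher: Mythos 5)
Your reduction is sound and matches the paper's: the right inequality follows from Theorem~\ref{immediate:teo}, and once one knows $\|N\|\leqslant c(N)$ for the relevant covers, multiplicativity of $\|\cdot\|$ gives $\|M\|\leqslant c_\infty(M)$. (The paper does this even more directly: since virtual torsion-freeness passes to finite-index subgroups, it applies $\|\widetilde M\|\leqslant c(\widetilde M)$ to \emph{every} finite cover of $M$ and divides by the degree, with no need to route through a fixed torsion-free cover $M_0$ or to invoke multiplicativity of $c_\infty$.) The difference is that the paper simply \emph{quotes} the inequality $\|N\|\leqslant c(N)$ for closed $N$ with virtually torsion-free fundamental group from Martelli's paper \cite{Mar}, whereas you try to prove it from scratch, and that is where your argument has a genuine gap.

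The gap is your ``core step'' by dualization, and it is not the technicality you describe but the entire content of the theorem. A simple spine does not dualize to a (pseudo-)triangulation in general: the obstruction is not monodromy around lower-dimensional strata but the more basic fact that the strata of a simple polyhedron need not be cells at all --- regions can be arbitrary $(n-1)$-manifolds and edges can be circles --- so there is no collection of ``dual simplices'' to which one could apply the $\alt$ operator. Only \emph{special} spines (all strata cells) are dual to triangulations, and passing from a minimal simple spine to a special one without increasing the number of vertices is precisely the hard part of Matveev's theorem in dimension $3$, where it requires irreducibility and excludes $S^3$, $\matRP^3$, $L(3,1)$. Torsion-freeness of $\pi_1$ does not rescue the construction: $S^2\times S^1$ has torsion-free fundamental group and a simple spine with $0$ vertices, which certainly is not dual to a triangulation with $0$ simplices (here the inequality $\|S^2\times S^1\|\leqslant c(S^2\times S^1)$ happens to hold, but your proposed mechanism for proving it fails). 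The actual proof of $\|N\|\leqslant c(N)$ in \cite{Mar} is a different and substantially longer argument; if you do not want to reprove it, you should cite it, as the paper does, and your dimension-$3$ ``sanity check'' via Theorem~\ref{matveev:teo} plus additivity under connected sum is indeed the correct geometrization-free substitute in that case.
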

\begin{proof}
As we have just said, the right inequality is in fact true for any closed $M$. Concerning the left inequality, we have $\|N\| \leqslant c(N)$ for any closed manifold $N$ with virtually torsion-free fundamental group \cite{Mar}. Since this group-theoretical property extends to every finite index subgroup of $\pi_1(M)$, for every degree-$d$ covering $\widetilde M \to M$ of $M$ we get 
$$\|M\| = \frac{\|\widetilde M \|}d \leqslant \frac{c(\widetilde M)}d$$
and therefore $\|M\|\leqslant c_\infty(M)$. 

If $N$ is a closed $3$-manifold,
the inequality $\|N\| \leqslant c(N)$ 
can be proved directly (without geometrization) building on these facts:
\begin{itemize}
\item both $c$ and $\|\cdot \|$ are additive on connected sums \cite{Mat, Gro}; 
\item if $M\in \{S^3, \matRP^3, S^2\times S^1, L(3,1)\}$ then $c(M)=0$ \cite{Mat};
\item if $M$ is irreducible and not in the above list then $c(M)=\sigma(M)$ (\cite{Mat}, see Theorem~\ref{matveev:teo}).
\end{itemize}
\end{proof}

Turning to dimension 3, we will prove below an appropriate version of Theorem \ref{matveev:teo} for $c_\infty$. First of all, the characteristic numbers $c_\infty$ and $\sigma_\infty$ coincide on the 3-manifolds we are mostly interested in:

\begin{prop} Let $M$ be a closed irreducible 3-manifold with $|\pi_1(M)|=\infty$. Then $c_\infty(M) = \sigma_\infty (M)$.
\end{prop}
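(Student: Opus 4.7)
The inequality $c_\infty(M)\leqslant \sigma_\infty(M)$ is already known for every closed manifold (Proposition \ref{smaller:prop} or directly from Theorem \ref{immediate:teo}), so the task is the reverse inequality $\sigma_\infty(M)\leqslant c_\infty(M)$. The plan is to show that every finite covering $\widetilde M\to M$ satisfies $\sigma(\widetilde M)=c(\widetilde M)$, whence taking the infimum of $c(\widetilde M)/d$ is the same as taking the infimum of $\sigma(\widetilde M)/d$. The key input is Matveev's equality in Theorem \ref{matveev:teo}, which gives $\sigma(N)=c(N)$ for every closed orientable irreducible 3-manifold $N$ not in the list $\{S^3,\matRP^3,L(3,1)\}$.

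So the crux is the following closure property: \emph{every finite cover $\widetilde M$ of $M$ is again a closed orientable irreducible 3-manifold with infinite fundamental group, and hence is not one of $S^3,\matRP^3,L(3,1)$.} Closedness is obvious, and $\pi_1(\widetilde M)$ is a finite-index subgroup of $\pi_1(M)$ and is therefore infinite. By the sphere theorem applied to $M$ together with a standard Hurewicz argument, $M$ is aspherical; any cover of an aspherical manifold is aspherical, and a closed aspherical 3-manifold is irreducible (its universal cover is contractible, so it cannot split as a non-trivial connected sum and is not $S^3$). Alternatively one may invoke the equivariant sphere theorem of Meeks--Simon--Yau, which directly shows that a covering of an orientable irreducible 3-manifold is irreducible. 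Orientability is handled by passing first to the orientation double cover if necessary; since this is itself a finite cover, it does not change the value of $c_\infty$ nor of $\sigma_\infty$ in the argument below.

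With this in hand, for every finite covering $\widetilde M\stackrel{d}{\to}M$ (taken orientable without loss of generality) Matveev's theorem gives $\sigma(\widetilde M)=c(\widetilde M)$, so
\[
\sigma_\infty(M)=\inf_{\widetilde M\stackrel{d}{\to}M}\frac{\sigma(\widetilde M)}{d}
=\inf_{\widetilde M\stackrel{d}{\to}M}\frac{c(\widetilde M)}{d}=c_\infty(M).
\]
Combined with the reverse inequality this yields the claim.

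The only non-formal step is the closure property of the class of closed irreducible 3-manifolds with infinite $\pi_1$ under finite coverings; this is the ``main obstacle'', but it is settled by classical 3-manifold topology (sphere theorem or Meeks--Simon--Yau), so no additional work is needed beyond citing those results.
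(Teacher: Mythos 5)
Your proposal is correct and follows the same route as the paper: the paper's proof is exactly the observation that every finite cover of $M$ is again closed, irreducible, with infinite fundamental group, so Matveev's equality $c=\sigma$ applies to every term in the two infima. You simply supply the standard justification (sphere theorem/asphericity, or Meeks--Simon--Yau, plus cofinality of orientable covers) that the paper leaves implicit.
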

\begin{proof}
Every finite-index covering $N$ of $M$ is irreducible with $|\pi_1(N)|=\infty$ and hence $c(N)=\sigma(N)$. Therefore $c_\infty(M) = \sigma_\infty(M)$.
\end{proof}

We will show in the next section that $c_\infty$ is also additive on connected sums and monotonic with respect to cutting along incompressible surfaces. More than that, we will show that $c_\infty$ is also additive on JSJ decompositions. Note that $c$ is certainly \emph{not} additive on JSJ decompositions, since there are only finitely many irreducible 3-manifolds of any given complexity $c$ (because $c=\sigma$ there), whereas infinitely many 3-manifolds can share the ``same'' JSJ decomposition (in the weak sense that they share the same geometric blocks, but assembled via different maps).

\subsection{Surfaces and elliptic manifolds}
The following propositions describe
examples where the stable complexity is equal to the simplicial volume and strictly smaller than 
the stable $\Delta$-complexity.

\begin{prop}\label{surface2}
If $S$ is a compact surface then $c_\infty (S) = \| S\|= 2\chi_-(S)$,
where $\chi_-(S) = \min\{-\chi(S), 0\}$.
Therefore, if $S$ is closed 
we have $\sigma_\infty(S)>c_\infty (S)$ if $\chi(S)>0$ and 
$\sigma_\infty(S)=c_\infty(S)$ if $\chi(S)\leqslant 0$.
\end{prop}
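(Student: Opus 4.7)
The backbone is the sandwich $\|S\|\leqslant c_\infty(S)\leqslant\sigma_\infty(S)$ provided by Proposition~\ref{smaller:prop}, which applies since surface groups are (virtually) torsion-free. Combined with Proposition~\ref{surface}, which gives $\sigma_\infty(S)=\|S\|=2|\chi(S)|$ for every closed surface $S\notin\{S^2,\matRP^2\}$, this immediately forces $c_\infty(S)=\|S\|=2\chi_-(S)$ and $\sigma_\infty(S)=c_\infty(S)$ whenever $S$ is closed with $\chi(S)\leqslant 0$. Note that this already covers the flat cases $T^2$ and the Klein bottle, where the common value is $0$, and in particular settles both halves of the proposition in the closed, non-positive-$\chi$ range.

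For the two remaining closed surfaces $S^2$ and $\matRP^2$ we have $\|S\|=\chi_-(S)=0$ while Proposition~\ref{surface} records $\sigma_\infty(S^2)=2$ and $\sigma_\infty(\matRP^2)=1$. It therefore suffices to exhibit simple spines without trivalent vertices: a single embedded circle in $S^2$ cuts the sphere into two open disks, and an essential simple closed curve in $\matRP^2$ cuts it into a single open disk; both are vertex-free and hence realize $c=0$. Thus $c_\infty(S^2)=c_\infty(\matRP^2)=0$, which settles $c_\infty=2\chi_-=0$ in these two cases and simultaneously produces the strict inequality $\sigma_\infty>c_\infty$ claimed for $\chi(S)>0$.

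It remains to treat the compact bounded case, where the sandwich from Proposition~\ref{smaller:prop} has to be extended from the closed setting. For $\chi(S)\geqslant 0$ (disk, annulus, Möbius band) the simplicial volume and $\chi_-(S)$ both vanish and explicit vertex-free spines exist (the empty polyhedron for $D^2$, a core curve for the annulus and the Möbius band), so $c_\infty(S)=0$. For $\chi(S)<0$ I would double $S$ along $\partial S$: the closed surface $DS$ satisfies $\chi(DS)=2\chi(S)<0$ and $\|DS\|=2\|S\|$, so the closed case already delivers $c_\infty(DS)=-2\chi(DS)=4|\chi(S)|$. Since any simple spine of $S$ with $v$ vertices doubles to a simple spine of $DS$ with $2v$ vertices, one gets $c_\infty(S)\geqslant\tfrac12 c_\infty(DS)=\|S\|$; the reverse inequality is obtained by constructing a trivalent-graph spine of $S$ with $2|\chi(S)|$ vertices (using a theta-graph for a pair of pants and pasting along boundary circles in general).

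The principal obstacle is the bounded case with $\chi(S)<0$: I need to verify carefully that doubling a simple spine of $S$ really produces a simple spine of $DS$ (so that the vertex count doubles exactly), and that $\|DS\|=2\|S\|$ in the sense of the bounded simplicial volume used here. Everything else reduces mechanically to the sandwich inequality and the known value of $\sigma_\infty$ from Proposition~\ref{surface}.
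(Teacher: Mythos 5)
Your treatment of the closed cases and of the bounded cases with $\chi(S)\geqslant 0$ is correct and essentially identical to the paper's argument: sandwich $\|S\|\leqslant c_\infty(S)\leqslant\sigma_\infty(S)$ plus Proposition~\ref{surface} for closed $S$ with $\chi(S)\leqslant 0$, and explicit vertex-free spines for the exceptional surfaces. One small slip: the empty polyhedron is not a spine of $D^2$, because the complement of a spine must be an open collar of the boundary together with open balls, and $D^2$ itself is neither; take a concentric circle instead, exactly as for the annulus and the M\"obius band.

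The genuine gap is in the bounded case with $\chi(S)<0$, and it sits exactly at the step you flagged: doubling a simple spine does \emph{not} produce a simple spine of $DS$, and the vertex count does not double exactly. If $P\subset S$ is a simple spine and $P'$ is its mirror copy, the complement of $P\sqcup P'$ in $DS$ contains, for each boundary circle $C$ of $S$, an open annulus $C\times(-1,1)$ obtained by gluing the two collars; since $DS$ is closed, the complement of a spine must consist of open balls only, so $P\sqcup P'$ is not a spine. Any repair (for instance inserting an arc $\{pt\}\times(-1,1)$ into each such annulus) creates extra trivalent vertices, roughly two per boundary circle, so one only gets $c(DS)\leqslant 2c(S)+2b$ with $b$ the number of boundary components, and this additive error does not vanish after stabilization unless you choose coverings in which every boundary circle is unwrapped with large degree, so that the number of boundary circles grows sublinearly in the covering degree --- this is precisely the mechanism of Lemma~\ref{tori:lemma} and Proposition~\ref{filling:prop}, which your sketch does not invoke. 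Moreover the identity $\|DS\|=2\|S\|$ for the relative simplicial volume, on which your lower bound also rests, is not formal: it requires Gromov's additivity for gluings along the (amenable) boundary circles, or an independent computation. The paper bypasses both difficulties: it computes $\|S\|=2\chi_-(S)$ for bounded $S$ directly from Theorem~\ref{prop:teo} applied to the complete finite-area hyperbolic structure on the interior (and by degree-greater-than-one self-maps or coverings for the disk, annulus and M\"obius band), gets the upper bound from an explicit trivalent spine with $2\chi_-(S)$ vertices, and gets the lower bound $\|S\|\leqslant c_\infty(S)$ from $\|\cdot\|\leqslant c$ applied to every finite covering together with multiplicativity of the simplicial volume. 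So your route can be completed, but only after importing the unwrapping-covering argument and the additivity of $\|\cdot\|$ under doubling; as written, the key step fails.
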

\begin{proof}
Let us first recall that the equality $\| S\|=2\chi_-(S)$ (which was stated
in Propositon~\ref{surface} for closed surfaces) also holds for surfaces with boundary.
In fact, if $S$ is a disk or an annulus, then the pair
$(S, \partial S)$ admits a self-map of degree bigger than
one, so $\| S\|=\chi_-(S)=0$. If $S$ is
a M\"obius strip, then $S$ is covered by the annulus, so again
$\| S\|=\chi_-(S)=0$. In the remaining cases, the interior of
$S$ admits a complete finite-volume hyperbolic structure, so we may apply
Theorem~\ref{prop:teo} to get
$$
\|S\|=\frac{{\rm Area}({\rm int}(S))}{v_2}=\frac{2\pi|\chi(S)|}{\pi}=2\chi_-(S) ,
$$
where $\vol({\rm int}(S))=2\pi|\chi(S)|$ by Gauss-Bonnet Theorem,
and $v_2=\pi$ since the maximal area of hyperbolic triangles is equal to 
$\pi$.

Let us now come to the statement of the proposition.
If $S$ is either $S^2$, $\matRP^2$, an annulus, or a M\"obius strip,
then $S$ has a spine without vertices (a circle) and hence $c(S)=0$. Every other surface $S$ with non-empty boundary has a simple (\emph{i.e.}~trivalent) spine with $2\chi_-(S)$ vertices, and hence 
$$\|S\|\leqslant c_\infty(S) \leqslant c(S) \leqslant 2\chi_-(S) = \|S\| .$$
This
proves 
the first statement
when $S$ has non-empty boundary or $\chi(S)>0$. 
If $S$ is closed with $\chi(S)\leqslant 0$ then by Proposition~\ref{easy:prop}
we have
$$\| S\|\leqslant c_\infty (S)\leqslant \sigma_\infty(S)= \| S\|=2\chi_-(S),$$
whence the conclusion.
\end{proof}

\begin{prop}\label{elliptic}
If $M$ is an elliptic $n$-manifold then $c_\infty(M)=\| M\|=0$ and $\sigma_\infty(M)>0$.
\end{prop}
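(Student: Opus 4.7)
The plan is to exploit the fact that $M$ is a quotient of $S^n$ by a free action of a finite group, so $\pi_1(M)$ is finite, the universal covering $S^n\to M$ has finite degree $d=|\pi_1(M)|$, and $M$ admits only finitely many finite-sheeted coverings, all of degree at most $d$. Each of the three assertions then reduces to an elementary fact about $S^n$ combined with this finiteness.

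For $\|M\|=0$, I would invoke $\|S^n\|=0$, a standard fact (the simplicial volume vanishes on simply connected manifolds \cite{Gro, Ivanov}, or alternatively $S^n$ admits self-maps of arbitrary positive degree). Since the simplicial volume is a characteristic number, multiplicativity with respect to the covering $S^n\to M$ gives $\|M\|=\|S^n\|/d=0$.

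For $c_\infty(M)=0$, I would show that $c(S^n)=0$ by exhibiting a simple spine with no vertices: a great $(n-1)$-sphere $\Sigma\subset S^n$ separates $S^n$ into two open $n$-balls, so it is a spine; and being a closed $(n-1)$-manifold, every point of $\Sigma$ is of regular type and in particular not a vertex. Applying the definition of $c_\infty$ to the cover $S^n\to M$ then yields $c_\infty(M)\leqslant c(S^n)/d=0$, and non-negativity of $c_\infty$ forces equality.

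Finally, for $\sigma_\infty(M)>0$, I would note that the infimum defining $\sigma_\infty(M)$ now ranges over a finite set of coverings, all of degree at most $d$, and that $\sigma(\widetilde M)\geqslant 1$ for every such $\widetilde M$ since any loose triangulation of a closed manifold uses at least one simplex. Hence $\sigma_\infty(M)\geqslant 1/d>0$. The only mildly delicate point in the plan is confirming that the equatorial $\Sigma$ genuinely fits the local-model definition of simple polyhedron used in the paper, but since all of its points are manifold-type this is essentially automatic.
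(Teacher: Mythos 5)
Your proposal is correct and follows essentially the same route as the paper: $\|S^n\|=0$ plus multiplicativity, $c(S^n)=0$ plus submultiplicativity of $c$, and finiteness of the set of coverings of a manifold with finite fundamental group together with $\sigma\geqslant 1$. The only difference is that you justify $c(S^n)=0$ directly via the equatorial $(n-1)$-sphere spine (which is indeed a simple spine with no vertices, all its points being of manifold type), where the paper simply cites Matveev and Martelli.
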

\begin{proof}
For every $n\geqslant 1$
we have $c(S^n)=0$ \cite{Mat, Mar}
and $\| S^n\|=0$, since $S^n$ admits a self-map of degree bigger than one.
Since every elliptic manifold is covered by $S^n$, we get 
$c_\infty(M)=\| M\|=0$.
On the other hand $\sigma(M)>0$ for every manifold $M$ and hence $\sigma_\infty(M)>0$ whenever $M$ has finite fundamental group, and hence only finitely many coverings.
\end{proof}

\section{Three-manifolds} \label{Three:section}

We study here the stable complexity $c_\infty$ of 3-manifolds. We first show that $c_\infty$ is additive on connected sums and JSJ decompositions: while additivity on connected sums is easy, to prove additivity on JSJ decompositions we make an essential use of a couple of lemmas established by Hamilton \cite{Ham}. As a corollary, we compute $c_\infty$ on any irreducible 3-manifold whose JSJ decomposition consists of Seifert pieces and hyperbolic manifolds commensurable with the figure-eight knot complement. 

We end this section by exhibiting a sequence of closed hyperbolic 3-manifolds $M_i$ (with bounded volume) for which the ratio between $c_\infty(M_i)$ and $\|M_i\|$ tend to one.

\subsection{Minimizing sequences and disconnected coverings.}
We define the following natural notion.
\begin{defn}
Let $M$ be a compact manifold. A \emph{minimizing sequence} of 
coverings $f_i\colon M_i\stackrel{d_i}{\to} M$ is a sequence such that $\frac{c(M_i)}{d_i} \to c_\infty(M)$. 
\end{defn}
Of course every manifold $M$ has a minimizing sequence. Given a minimizing sequence $f_i\colon M_i\stackrel{d_i}{\to} M$, we can replace each $M_i$ with any manifold $N_i$ covering $M_i$ and we still get a minimizing sequence. In particular, if $M$ is a 3-manifold with $|\pi_1(M)|=\infty$ we can always take a minimizing sequence such that $d_i\to \infty$ because $\pi_1(M)$ is residually finite \cite{Hem}.

Let $\eta(M)$ be an invariant which is submultiplicative under finite coverings, like $c(M)$, $\sigma(M)$, or $\|M\|^\matZ$. We have defined in this paper the \emph{stable} version $\eta_\infty(M)$ of $\eta(M)$ by taking the infimum of $\frac{\eta(N)}d$ among all finite coverings $N\stackrel d\to M$. We have implicitly assumed in this definition that both $M$ and $N$ are connected, as this hypothesis is typically embodied in the definition of ``covering''. If we discard this hypothesis, thus allowing both $M$ and $N$ to be disconnected, we actually get the same stable function $\eta_\infty$. 

More precisely, we define a \emph{(possibly disconnected) degree-$d$ covering} as a map $p\colon M\to N$ between (possibly disconnected) topological spaces where every point in $N$ is contained in some open set $U$ such that $p^{-1}(U) = \cup_{i=1}^d U_i$ and $p|_{U_i}\colon U_i \to U$ is a homeomorphism. We re-define $\eta_\infty(M)$ for any (possibly disconnected) manifold $M$ as the infimum of $\frac{\eta(M)}d$ over all (possibly disconnected) degree-$d$ coverings of $M$. It is easy to verify that this slightly modified definition of $\eta_\infty(M)$ coincides on a connected manifold $M$ with the one we have introduced beforehand using only connected coverings, and that we get an additive function $\eta_\infty(\sqcup_{i\in I} M_i) = \sum_{i\in I} \eta_\infty(M_i)$ on the connected components of disconnected manifolds.

In this section (and nowhere else) we allow implicitly all converings to be disconnected: this is a natural framework when one cuts a 3-manifold along surfaces, and might get a disconnected 3-manifold as a result.

\subsection{Connected sums and incompressible surfaces}
Additivity on connected sums easily lifts from $c$ to $c_\infty$. We subdivide the proof in two steps.

\begin{prop} Let $M$ be a 3-manifold and $S\subset\interior M$ a 2-sphere. We have $c_\infty (M/\!/S) = c_\infty(M)$.
\end{prop}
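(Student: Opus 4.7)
The plan is to reduce both inequalities $c_\infty(M/\!/S)\leqslant c_\infty(M)$ and $c_\infty(M)\leqslant c_\infty(M/\!/S)$ to a single identity at the covering level: for any compact 3-manifold $\tilde M$ and any disjoint union $\tilde S$ of 2-spheres embedded in $\interior\tilde M$, one has $c(\tilde M/\!/\tilde S)=c(\tilde M)$. Once this is available, the first inequality is immediate: any covering $\tilde M\stackrel{d}{\to}M$ restricts to a covering $\tilde M/\!/\tilde S\stackrel{d}{\to}M/\!/S$ of the same degree, where $\tilde S$ is the preimage of $S$ and consists of exactly $d$ disjoint spheres (since $S$ is simply connected), with $c(\tilde M/\!/\tilde S)=c(\tilde M)$. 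For the reverse inequality, given any covering $\tilde N\stackrel{d}{\to}M/\!/S$, the preimage of each of the two sphere boundary components of $M/\!/S$ coming from $S$ consists of exactly $d$ disjoint spheres; choosing any bijection between these two sets of $d$ spheres and gluing back $d$ copies of the tube $S^2\times[-1,1]$ accordingly yields a covering $\tilde M\stackrel{d}{\to}M$ with $\tilde M/\!/\tilde S=\tilde N$ by construction, and applying the key identity again gives the matching bound on $c$.

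To prove the identity $c(\tilde M/\!/\tilde S)=c(\tilde M)$, I would decompose the passage from $\tilde M$ to $\tilde M/\!/\tilde S$ into two $c$-preserving moves. First, performing surgery along each sphere $\sigma\in\tilde S$—removing an open tubular neighbourhood and capping each of the two resulting sphere boundaries with a closed 3-ball—produces a manifold $\tilde M_{\mathrm{surg}}$ with the same Matveev complexity as $\tilde M$: a short case analysis on whether $\sigma$ is separating or non-separating and essential or inessential shows that the surgery either splits a connected-sum decomposition $\tilde M=A\#B$ into $A\sqcup B$, destabilizes an $S^2\times S^1$ summand, or pinches off a spurious $S^3$, and in each case Matveev's additivity on connected sums (Theorem~\ref{matveev:teo}) combined with the vanishings $c(S^3)=c(S^2\times S^1)=0$ preserves $c$. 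Second, $\tilde M/\!/\tilde S$ is obtained from $\tilde M_{\mathrm{surg}}$ by removing the $2|\tilde S|$ closed 3-balls introduced by the capping, and this too preserves $c$: the inequality $c(M\setminus B^3)\leqslant c(M)$ follows by PL-isotoping a given closed ball into one of the ball components of the complement of a spine of $M$, while the converse follows because the open collar of the extra sphere boundary in $M\setminus B^3$ together with the reattached closed ball forms one of the allowed open-ball components in the complement of a spine of $M$.

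The main obstacle will be the case analysis underlying the surgery step, especially for non-separating spheres, where one must identify the correct $S^2\times S^1$ summand of $\tilde M$, and for inessential spheres, where surgery silently introduces an extra $S^3$ component that must be cancelled by the vanishing of its complexity; the other ingredients (disjoint-union and connected-sum additivity of $c$, and the ball-removal invariance) are essentially formal once these are in place.
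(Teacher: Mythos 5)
Your argument is correct and follows essentially the same route as the paper: both inequalities are reduced to the identity $c(\widetilde M/\!/\widetilde S)=c(\widetilde M)$ for a disjoint union of spheres, with the forward inequality obtained by lifting $S$ to a covering and the reverse one by gluing the $2d$ boundary spheres of a covering of $M/\!/S$ back in pairs. The only difference is that the surgery case analysis you carry out re-derives the single-sphere identity $c(N/\!/S)=c(N)$, which the paper simply quotes from Matveev \cite{Mat}.
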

\begin{proof}
We know \cite{Mat} that if $N$ is a 3-manifold and $S\subset\interior N$ is a sphere then $c(N/\!/S) = c(N)$, so the same result for $c_\infty$ follows easily. 

If $p\colon \widetilde M \to M$ is a covering, the preimage $\widetilde S = p^{-1}(S)$ is a union of spheres, and hence $c(\widetilde M/\!/\widetilde S) = c(\widetilde M)$. Every covering $\widetilde M \stackrel d\to M$ induces a covering $\widetilde M  /\!/ \widetilde S \stackrel d\to M/\!/S$ with $c(\widetilde M) = c(\widetilde M /\!/ \widetilde S)$, hence $c_\infty(M/\!/S)\leqslant c_\infty(M)$. Conversely, every covering $N \stackrel d \to M/\!/S$ gives rise to a covering $N' \stackrel d \to M$, where $N'$ is obtained from $N$ by gluing the $2d$ boundary spheres in pairs. In particular $c(N') = c(N)$ and hence we also get $c_\infty(M/\!/S)\geqslant c_\infty(M)$.
\end{proof}

\begin{cor} Let $M,N$ be any compact 3-manifolds. We have 
$$c_\infty(M\#N) = c_\infty(M)+c_\infty(N).$$
\end{cor}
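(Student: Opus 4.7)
The plan is to deduce the additivity of $c_\infty$ on connected sums from the preceding proposition applied to two families of 2-spheres, without ever directly manipulating spines: first cut $M\# N$ along its connected-sum sphere to split the two summands, then cut each of $M$ and $N$ along the boundary of an interior ball to relate $c_\infty$ of the punctured manifolds to $c_\infty$ of the original ones.

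In detail, let $S\subset M\# N$ be the 2-sphere realizing the connected sum. Then $(M\# N)/\!/S = M_0 \sqcup N_0$, where $M_0 = M\setminus \interior{B_M}$ and $N_0 = N\setminus \interior{B_N}$ for closed 3-balls $B_M \subset \interior{M}$ and $B_N\subset \interior{N}$. By the preceding proposition,
\[
c_\infty(M\# N)\,=\,c_\infty(M_0 \sqcup N_0).
\]
Under the disconnected-covering convention adopted in this section, $c_\infty$ is automatically additive on disjoint unions: a degree-$d$ covering of $M_0\sqcup N_0$ restricts to degree-$d$ coverings of the two components, and conversely any two coverings of $M_0$ and $N_0$ of possibly different degrees $d_1,d_2$ can be stabilized to a common degree $d_1 d_2$ by taking disjoint copies, without altering the per-degree complexity. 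Hence $c_\infty(M_0\sqcup N_0) = c_\infty(M_0)+c_\infty(N_0)$.

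It remains to show $c_\infty(M_0) = c_\infty(M)$, and symmetrically for $N$. I would apply the preceding proposition once more, this time to $M$ itself: the cut $M/\!/\partial B_M$ of $M$ along the 2-sphere $\partial B_M$ is the disjoint union $B_M \sqcup M_0$, so
\[
c_\infty(M)\,=\,c_\infty(B_M \sqcup M_0)\,=\,c_\infty(B^3)+c_\infty(M_0)\,=\,c_\infty(M_0),
\]
where $c_\infty(B^3)\leqslant c(B^3) = 0$ by \cite{Mat}. Chaining the three equalities yields the corollary.

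The main (and essentially only) obstacle is the formal verification that $c_\infty$ respects disjoint unions under the disconnected-covering convention; everything else is a direct and mechanical application of the preceding proposition. Note that one could alternatively prove $c_\infty(M_0) = c_\infty(M)$ by a direct spine argument, showing that any simple spine of $M$ avoiding a small interior ball is also a simple spine of the punctured manifold (the complementary open ball becomes an open collar of the new sphere boundary) and vice versa, but the route via the preceding proposition and the identity $M = M\# S^3$ sidesteps any such bookkeeping.
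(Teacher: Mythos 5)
Your proof is correct and follows essentially the same route as the paper's: cut along the connected-sum sphere using the preceding proposition, use additivity of $c_\infty$ on disjoint unions (which the paper sets up in its conventions on disconnected coverings), and absorb the puncturing via $c_\infty(D^3)=c(D^3)=0$. The paper phrases the last step as ``capping a boundary sphere with a $3$-disc'' rather than cutting $M$ along $\partial B_M$, but this is the same observation.
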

\begin{proof}
Cutting and glueing along 2-spheres does not vary $c_\infty$. Capping a boundary 2-sphere with a 3-disc $D^3$ also does not modify $c_\infty$ since $c_\infty(D^3) = c(D^3) = 0$.
\end{proof}
Another property which lifts easily from $c$ to $c_\infty$ is monotonicity under the operation of cutting along incompressible surfaces. 

\begin{prop} Let $S\subset \interior{M}$ be an incompressible surface in an irreducible 3-manifold $M$. We have
$$c_\infty(M/\!/S) \leqslant c_\infty(M).$$
\end{prop}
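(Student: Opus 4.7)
The plan is to promote Matveev's inequality $c(M/\!/F)\leqslant c(M)$ (third item of Theorem~\ref{matveev:teo}) from $c$ to $c_\infty$ by lifting it along a minimizing sequence of finite coverings of $M$.

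Fix a minimizing sequence of (possibly disconnected) finite coverings $p_i\colon \widetilde M_i \stackrel{d_i}{\to} M$, that is $c(\widetilde M_i)/d_i \to c_\infty (M)$. Set $\widetilde S_i = p_i^{-1}(S) \subset \interior{\widetilde M_i}$. The three key standard facts that I would invoke are: (a) finite coverings of closed orientable irreducible 3-manifolds are irreducible (by the equivariant sphere theorem of Meeks--Simon--Yau); (b) the preimage of an incompressible surface under a covering is incompressible (a compressing disc for $\widetilde S_i$ would project to a singular compressing disc for $S$, which by the loop theorem would give a genuine compressing disc); (c) cutting commutes with taking preimages, \emph{i.e.}~$\widetilde M_i/\!/ \widetilde S_i = p_i^{-1}(M/\!/S)$ and the restriction of $p_i$ is still a degree-$d_i$ covering onto $M/\!/S$.

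Granting (a)--(c), each $\widetilde M_i$ is irreducible and each $\widetilde S_i$ is incompressible, so Matveev's inequality applies component by component and yields
\[
c\bigl(\widetilde M_i/\!/ \widetilde S_i\bigr)\ \leqslant\ c(\widetilde M_i).
\]
Since $\widetilde M_i/\!/\widetilde S_i \to M/\!/S$ is a degree-$d_i$ covering (possibly disconnected, which is harmless in view of our extended definition of stable complexity discussed above), we conclude
\[
c_\infty (M/\!/S)\ \leqslant\ \frac{c(\widetilde M_i/\!/\widetilde S_i)}{d_i}\ \leqslant\ \frac{c(\widetilde M_i)}{d_i},
\]
and letting $i\to \infty$ gives $c_\infty (M/\!/S)\leqslant c_\infty(M)$.

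The only mildly delicate points are the topological facts (a) and (b), which ensure that Matveev's hypothesis on the cover and on the lifted surface is met; both are well known in the 3-manifold literature, so the main content of the argument is simply the observation that incompressible surfaces, finite coverings, and cutting interact well enough for the inequality at the $c$-level to descend to a limit inequality at the $c_\infty$-level. No additional estimate is needed.
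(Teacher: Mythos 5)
Your argument is correct and follows the same route as the paper: pass to a finite covering, note the cover is irreducible and the lifted surface is incompressible, apply Matveev's inequality $c(\widetilde M/\!/\widetilde S)\leqslant c(\widetilde M)$, and observe that $\widetilde M/\!/\widetilde S\to M/\!/S$ is still a degree-$d$ covering. The paper's proof is just a terser version of yours (it does not spell out the equivariant sphere theorem or the loop theorem), so no substantive difference.
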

\begin{proof}
If $p\colon \widetilde M \to M$ is a covering, the manifold $\widetilde M$ is irreducible and the pre-image $\widetilde S = p^{-1}(S)$ of $S$ is a (possibly disconnected) incompressible surface in $\widetilde M$. Therefore $c(\widetilde M /\!/ \widetilde S)\leqslant c(\widetilde M)$ by Theorem \ref{matveev:teo}, and
$c_\infty(M/\!/S)\leqslant c_\infty(M)$.
\end{proof}

When the incompressible surface is a torus, we actually get an
equality. To prove this non-trivial fact (which does not hold for $c$ and heavily depends on geometrization) 
we will need to construct appropriate
coverings of irreducible 3-manifolds, using some techniques introduced
by Hempel in his proof that the fundamental group of an irreducible
3-manifold is residually finite \cite{Hem} and further developed in a
recent paper by E. Hamilton \cite{Ham}.

\subsection{Characteristic coverings}
Recall that a \emph{characteristic subgroup} of a group $G$ is a subgroup $H<G$ which is invariant by any automorphism of $G$. For a natural number $x\in \matN$, the \emph{$x$-characteristic} subgroup of $\matZ\times \matZ$ is the subgroup $x(\matZ\times\matZ)$ generated by $(x,0)$ and $(0,x)$. It has index $x^2$ if $x>0$ and $\infty$ if $x=0$. The characteristic subgroups of $\matZ\times \matZ$ are precisely the $x$-characteristic subgroups with $x\in \matN$. It is easy to prove that a subgroup of $\matZ\times\matZ$ of index $x$ contains the $x$-characteristic subgroup.

A covering $p\colon \widetilde T\to T$ of tori is called \emph{$x$-characteristic} if $p_*(\pi_1(\widetilde T))$ is the $x$-characteristic subgroup of $\pi_1(T)\Isom \matZ\times \matZ$. A covering $p\colon \widetilde M \to M$ of 3-manifolds bounded by tori is \emph{$x$-characteristic} if the restriction of $p$ to each boundary component of $\widetilde M$ is $x$-characteristic.

Lemmas 5 and 6 from \cite{Ham} state the following.

\begin{lemma}[E. Hamilton] \label{Hamilton:lemma}
Let $M_1,\ldots, M_n$ be a finite collection of compact, orientable 3-manifolds with boundary whose interiors admit complete hyperbolic structures of finite volume. Let $m$ be a positive integer. Then there exist a positive integer $x$ and finite-index normal subgroups $K_i\triangleleft \pi_1(M_i)$ such that $K_i\cap \pi_1(T_{ij})$ is the characteristic subgroup of index $(mx)^2$ in $\pi_1(T_{ij})$, for each component $T_{ij}$ of $\partial M_i$. Hence the covering of $M_i$ corresponding to $K_i$ is $(mx)$-characteristic.
\end{lemma}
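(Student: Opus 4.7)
The plan is to use three ingredients classical for cusped hyperbolic $3$-manifolds: residual finiteness of $\pi_1(M_i)$, separability of peripheral subgroups (and of their finite-index subgroups), and the geometric fact that distinct conjugates of a peripheral $\matZ^2$ in $\pi_1(M_i)$ intersect trivially, each being a maximal parabolic subgroup fixing a unique point of $\partial\matH^3$. From these I would first build, for each $M_i$, a finite-index subgroup whose restriction to each peripheral torus is a prescribed characteristic subgroup, then pass to a normal subgroup, and finally align the restriction index across all manifolds and cusps by a divisibility argument.

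First, for a single $M=M_i$, a single cusp $T=T_{ij}$, and a target integer $y$, I would construct a finite-index subgroup $H\leqslant\pi_1(M)$ with $H\cap\pi_1(T)=y\matZ^2$ exactly. Since $y\matZ^2$ has finite index in $\pi_1(T)$, its complement in $\pi_1(T)$ is a finite union of cosets $g_1 y\matZ^2,\ldots,g_{y^2-1}y\matZ^2$. Using separability of $y\matZ^2$ in $\pi_1(M)$, which follows from the LERF property of cusped hyperbolic $3$-manifold groups (or classically from separability of peripheral subgroups plus a finite-index descent), for each $k$ I would find a finite-index $H_k\leqslant\pi_1(M)$ containing $y\matZ^2$ but missing $g_k$, and set $H=\bigcap_k H_k$.

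To upgrade $H$ to a normal subgroup $K$, I would take the normal core $K=\bigcap_g gHg^{-1}$; normality and finite index are automatic. The delicate point is that the intersection with $\pi_1(T)$ might shrink below $y\matZ^2$. Here one uses that any conjugate $g\pi_1(T)g^{-1}$ with $g\notin\pi_1(T)$ intersects $\pi_1(T)$ trivially (since distinct maximal parabolics fix distinct ideal points of $\matH^3$), so the contribution of $gHg^{-1}\cap\pi_1(T)$ is really controlled by $H$ restricted to the conjugate peripheral subgroup $g^{-1}\pi_1(T)g$. By applying the previous step simultaneously to all peripheral subgroups of all $M_i$, and then replacing $y$ by a common multiple of the finitely many resulting indices (exploiting that the characteristic subgroups $y\matZ^2$ are totally ordered by divisibility of $y$), one forces $K\cap\pi_1(T_{ij})=y\matZ^2$ on the nose.

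Finally, only finitely many manifolds and cusps are involved, so the diagonal step is clean: run the above for each pair $(i,j)$ to produce finitely many intermediate indices, then let $x$ be the prescribed multiplier so that $y=mx$ is divisible by all of them. The main obstacle is the normalization step: after passing to the normal core, keeping $K\cap\pi_1(T)$ equal to $y\matZ^2$ rather than to a strictly smaller sublattice. This is exactly where the cusped-hyperbolic geometric input (self-normalizing peripherals and trivial intersection of distinct maximal parabolics) must be combined with the characteristic nature of $y\matZ^2$ in $\pi_1(T)\cong\matZ^2$; the remaining parts of the argument are purely group-theoretic and fall out of residual finiteness and a divisibility bookkeeping.
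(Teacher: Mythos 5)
First, a point of reference: the paper offers no proof of this lemma at all --- it is quoted verbatim as Lemma~5 of Hamilton's paper \cite{Ham} --- so your proposal must be measured against Hamilton's own argument, which is arithmetic rather than separability-theoretic. She reduces the holonomy representation modulo suitable powers of an ideal in the finitely generated ring containing the matrix entries, arranging that each peripheral $\matZ\times\matZ$ maps onto $(\matZ/(mx))^2$ in the standard way. The desired $K_i$ is the kernel of a homomorphism onto a finite group, so it is automatically normal, and its intersection with $\pi_1(T_{ij})$ is exactly the kernel of the restricted homomorphism; this is how normality and the \emph{exact} identification of $K_i\cap\pi_1(T_{ij})$ are achieved simultaneously, and it is also where the otherwise mysterious multiplier $x$ comes from.

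Your argument has a genuine gap precisely at the step you flag as delicate. Separability does give a finite-index $H$ with $H\cap\pi_1(T)=y\matZ^2$ exactly (that part is correct), but for the normal core $K=\bigcap_g gHg^{-1}$ one has $K\cap\pi_1(T)=\bigcap_g g\bigl(H\cap g^{-1}\pi_1(T)g\bigr)g^{-1}$, and the geometric fact you invoke --- that $\pi_1(T)\cap g\pi_1(T)g^{-1}=\{1\}$ for $g\notin\pi_1(T)$ --- says nothing about $H\cap g^{-1}\pi_1(T)g$. That is an uncontrolled finite-index subgroup of the conjugate peripheral lattice; conjugated back by $g$ it becomes an uncontrolled finite-index sublattice of $\pi_1(T)$, and an intersection of finitely many such sublattices need not be of the form $z\matZ^2$ at all, let alone equal to $y\matZ^2$. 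Your proposed repair --- replace $y$ by a common multiple of ``the finitely many resulting indices'' --- is circular: those indices come from normal cores of subgroups built from $y$, so they change when $y$ does, and nothing forces the process to stabilize. To close the gap you need a construction that produces the normal subgroup directly as the kernel of a map to a finite group whose restriction to each $\pi_1(T_{ij})$ has the prescribed characteristic kernel; this is exactly what Hamilton's congruence quotients provide.
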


\begin{lemma}[E. Hamilton] \label{Hamilton:2:lemma}
Let $M$ be a compact, orientable Seifert fibered space with non-empty, incompressible boundary. Then there exists a positive integer $v$ such that for each multiple $m$ of $v$ there is a finite $m$-characteristic covering space $M_m$ of $M$.
\end{lemma}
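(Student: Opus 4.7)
The plan is to use the Seifert structure to reduce to the product case $\Sigma\times S^1$ and then produce $m$-characteristic covers by explicit constructions. Since $M$ has non-empty incompressible boundary, its base orbifold $\mathcal{O}$ has boundary, so $\pi_1^{\rm orb}(\mathcal{O})$ is virtually free; taking a torsion-free finite-index normal subgroup gives a manifold cover $\Sigma\to\mathcal{O}$ which pulls back to a finite normal cover $M'\to M$ realizing $M'$ as an honest $S^1$-bundle over the surface-with-boundary $\Sigma$. Since $H^2(\Sigma;\matZ)=0$, this bundle is trivial, so $M'\cong\Sigma\times S^1$. On each boundary torus $T_i$ of $M$, each preimage torus $T'_j\subset M'$ identifies $\pi_1(T'_j)$ with a finite-index sublattice $H_{i,j}\leqslant\pi_1(T_i)\cong\matZ^2$.

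For any integer $N\geqslant 1$ I would then construct an explicit product cover of $\Sigma\times S^1$: the degree-$N$ self-cover of $S^1$ combined with a connected regular cover $\Sigma_N\to\Sigma$ in which every boundary circle is $N$-fold covered. Such a $\Sigma_N$ exists because the boundary classes $[b_1],\ldots,[b_k]$ satisfy only the single relation $\sum_i [b_i]=0$ in $H_1(\Sigma;\matZ)$, so there is a surjection $H_1(\Sigma;\matZ)\to(\matZ/N)^{k-1}$ under which each $[b_i]$ has image of order exactly $N$. The resulting cover is $N$-characteristic on the boundary of $M'$, and composing with $M'\to M$ yields a cover whose image on each $T_i$ equals $N\cdot H_{i,j}\leqslant\pi_1(T_i)$.

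The hard step is that $N\cdot H_{i,j}$ is a characteristic subgroup of $\pi_1(T_i)$ only when $H_{i,j}$ is itself characteristic, so the preceding cover is not automatically characteristic on $M$. To circumvent this I would first refine $M'\to M$ to an intermediate finite cover $M_1\to M$ which is already $v_0$-characteristic on every boundary torus for some $v_0$; then applying the construction of the previous paragraph with $N=k$ starting from $M_1$ (which is again of the form $\Sigma_1\times S^1$) produces, for every $m=v_0 k$, an $m$-characteristic cover of $M$, and the statement holds with $v=v_0$. To build $M_1$ I would exploit the extension $1\to\matZ\langle f\rangle\to\pi_1(M)\to\pi_1^{\rm orb}(\mathcal{O})\to 1$, with $f$ the regular fiber (this is a central extension when the base is orientable, with an evident index-two modification otherwise), and produce a finite quotient $\pi_1(M)\to Q$ in which $f$ has order exactly $v_0$ and, on each boundary torus, the subgroup $\pi_1(T_i)=\langle f,s_i\rangle$ maps isomorphically onto $(\matZ/v_0)^2$. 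Existence of such a $Q$ rests on the virtual freeness of $\pi_1^{\rm orb}(\mathcal{O})$ and on the fact that one can prescribe the orders of finitely many elements (here the section classes $s_i$) in its finite quotients; this is the combinatorial heart of Hamilton's argument.
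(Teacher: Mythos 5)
First, a point of reference: the paper offers no proof of this statement — it is quoted verbatim from Hamilton's paper \cite{Ham} (Lemma 6), so there is no internal argument to compare yours against. Judged on its own terms, your outline has a sensible architecture (pass to a product $\Sigma\times S^1$ via a cover of the base orbifold, build uniform ``diagonal'' covers of the product, and interpose a characteristic intermediate cover $M_1$ to repair the non-characteristic peripheral lattices $H_{i,j}$), and the bookkeeping that a $k$-characteristic cover of a $v_0$-characteristic cover is $(kv_0)$-characteristic is correct. But the proposal is not a proof, for two concrete reasons.

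The step you yourself label ``the combinatorial heart of Hamilton's argument'' is precisely the content of the lemma, and you have not supplied it. You need a finite quotient $Q$ of $\pi_1(M)$ in which the fiber class $f$ — which dies in $\pi_1^{\rm orb}(\mathcal O)$ — survives with order exactly $v_0$ while each peripheral subgroup $\langle f,s_i\rangle$ maps isomorphically onto $(\matZ/v_0)^2$. This is strictly stronger than ``prescribing the orders of finitely many elements in finite quotients of a virtually free group'' (itself a nontrivial separability statement): it requires controlling the joint image of a rank-two subgroup one of whose generators lives in the kernel of $\pi_1(M)\to\pi_1^{\rm orb}(\mathcal O)$, so one must work with the central extension itself, e.g.\ show that $\pi_1(M)/\langle\langle f^{v_0}\rangle\rangle$ admits suitable finite quotients. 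Without an explicit construction of $Q$ (Hamilton does this by direct analysis of the Seifert presentation), the argument is circular: you assume a characteristic cover exists in order to produce characteristic covers.

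Second, the homological construction of $\Sigma_N$ fails when the relevant surface has a single boundary component: then $k=1$, the boundary class is a product of commutators and hence vanishes in $H_1(\Sigma;\matZ)$, so no finite abelian cover unwraps it, and your target group $(\matZ/N)^{k-1}$ is trivial. This is repairable (first pass to a cover with at least two boundary components, or use non-abelian quotients of the free group $\pi_1(\Sigma)$), but as written the step is false in that case, and nothing in your construction guarantees that $\Sigma$ or $\Sigma_1$ has two or more boundary circles. A smaller issue of the same kind: to conclude $M'\cong\Sigma\times S^1$ from $H^2(\Sigma;\matZ)=0$ you should first arrange $\Sigma$ to be orientable (compose with the orientation double cover if necessary), since an $S^1$-bundle with orientable total space over a non-orientable surface with boundary is necessarily twisted along orientation-reversing loops and is not a product.
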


We will use these lemmas to prove the following result, which concerns this question: given one covering on each piece of the JSJ decomposition of an irreducible 3-manifold $M$, can we glue them together to a covering of $M$? The answer is of course negative in general, since there is no way to glue arbitrary coverings which behave very differently along the tori of the JSJ decomposition; however Lemmas \ref{Hamilton:lemma} and \ref{Hamilton:2:lemma} can be used to replace the given coverings with some bigger $x$-characteristic coverings, and (as noted by Hempel \cite{Hem}) such coverings can indeed be glued together (but one needs to take multiple copies of each covering to glue everything properly). 

\begin{prop} \label{glue:prop}
Let an irreducible orientable 3-manifold $M$ with (possibly empty) boundary consisting of tori decompose along its JSJ decomposition into some pieces $M_1,\ldots, M_h$. Let $p_i\colon\widetilde{M_i}\to M_i$ be a finite covering for every $i$. There exist a natural number $n$, a finite covering $q_i\colon N_i \to \widetilde{M_i}$ for every $i$ and a finite covering $p\colon N \to M$ such that $p^{-1}(M_i)$ consists of copies of $N_i$ covering $M_i$ along $p_i\circ q_i$. Moreover each $p_i\circ q_i$ is $n$-characteristic.
\end{prop}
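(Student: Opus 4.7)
The strategy has two stages. First, we enlarge each covering $p_i\colon\widetilde M_i\to M_i$ so that the composite covering of $M_i$ is $n$-characteristic for a single integer $n$ that works for every piece. Once all the new coverings restrict on every JSJ torus $T$ to the same model $n$-characteristic cover $T^{(n)}\to T$, the pieces can be glued (after duplicating each $N_i$ a controlled number of times) to produce the desired covering $p\colon N\to M$.

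For the first stage we choose $n$ to simultaneously satisfy three divisibility conditions: the hypothesis of Lemma~\ref{Hamilton:lemma} applied to the family of hyperbolic pieces, the hypothesis of Lemma~\ref{Hamilton:2:lemma} applied to each Seifert piece, and divisibility by the index $[\pi_1(T):A_{T'}]$ for every boundary component $T'$ of every $\widetilde M_i$ mapping to a boundary torus $T$ of $M_i$, where $A_{T'}\subseteq\pi_1(T)\cong\matZ^2$ denotes the corresponding image subgroup. The first two conditions produce, via the Hamilton lemmas, normal subgroups $K_i\triangleleft\pi_1(M_i)$ of finite index with $K_i\cap\pi_1(T)=n\matZ^2$ for every boundary torus $T$ of $M_i$, while the third guarantees $n\matZ^2\subseteq A_{T'}$ for every such $T'$. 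Now set $K_i'=K_i\cap (p_i)_*\pi_1(\widetilde M_i)$; this has finite index in $(p_i)_*\pi_1(\widetilde M_i)$ and hence determines a finite covering $q_i\colon N_i\to\widetilde M_i$ whose composite with $p_i$ has image $K_i'$ in $\pi_1(M_i)$. A short double-coset computation using the normality of $K_i$ identifies the fundamental group of each boundary torus of $N_i$ mapping to $T\subset\partial M_i$ with $n\matZ^2\cap A_{T'}$ for the relevant $T'$, and this intersection equals $n\matZ^2$ by our choice of $n$. Therefore $p_i\circ q_i$ is $n$-characteristic.

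For the gluing stage, let $L$ be a common multiple of all the degrees $\deg(p_i\circ q_i)$ and take $L/\deg(p_i\circ q_i)$ disjoint copies of $N_i$ for each piece (using the disconnected covering convention introduced above). Above any JSJ torus $T\subset M$ there are then exactly $L/n^2$ preimage tori on each of the two sides, each of them isomorphic as a cover of $T$ to the canonical $n$-characteristic cover $T^{(n)}\to T$, since $n\matZ^2$ is characteristic in $\matZ^2$; for the same reason any homeomorphism of $T$, and in particular the JSJ gluing identification $T_i\cong T_j$, lifts to a homeomorphism of the covers. Choose an arbitrary bijection between the two collections of preimage tori and glue each matched pair through such a lift: the result is a covering $p\colon N\to M$ that restricts to $p_i\circ q_i$ on each copy of $N_i$, and the composition $p_i\circ q_i$ is $n$-characteristic by construction.

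The main technical difficulty lies in the first stage, namely arranging the several divisibility constraints (coming from Hamilton's lemmas applied to the various pieces, and from the boundary behaviour of the preassigned $p_i$'s) to be met by a single integer $n$, and then checking that passing from $K_i$ to $K_i'$ preserves the $n$-characteristic property on the boundary. The combinatorial gluing afterwards, which requires the disconnected covering convention and the Hempel-style trick of taking several copies of each $N_i$, is essentially formal once every $p_i\circ q_i$ has been made $n$-characteristic for the \emph{same} $n$.
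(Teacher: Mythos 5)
Your proposal is correct and follows essentially the same route as the paper: take the least common multiple of the Seifert constants from Lemma~\ref{Hamilton:2:lemma} and of the boundary indices of the given coverings, feed it into Lemma~\ref{Hamilton:lemma} to obtain a single $n$, intersect the resulting normal subgroups $K_i$ with $(p_i)_*\pi_1(\widetilde M_i)$ to make each composite covering $n$-characteristic, and then glue multiple copies \`a la Hempel. The only cosmetic difference is that the paper first passes to a regular cover of each $\widetilde M_i$ so that the boundary subgroups are independent of the chosen component, whereas you handle the conjugates directly via the normality of $K_i$, and you spell out Hempel's gluing step rather than citing it.
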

\begin{proof}
Up to taking a bigger covering we can suppose that $p_i$ is regular for every $i$. The pre-image of a boundary torus $T_{ij}\subset\partial M_i$ consists of finitely many tori $T_{ij}^1, \ldots, T_{ij}^l$, and the restriction of $p_i$ to each $T_{ij}^k$, $k=1,\ldots,l$, gives isomorphic coverings (because $p_i$ is regular). In particular $H_{ij} = (p_i)_*(\pi_1(T_{ij}^k))$ is a subgroup of $\pi_1(T_{ij})$ which does not depend on $k$. Let $d_{ij}$ be the index of $H_{ij}$ in $\pi_1(T_{ij})$.

By geometrization every $M_i$ is either hyperbolic or Seifert. For every Seifert block $M_i$ there is some integer $v_i$ such that the conclusion of Lemma \ref{Hamilton:2:lemma} applies. Let now $m$ be the least common multiple of all integers $d_{ij}$ and $v_i$. Let us apply Lemma \ref{Hamilton:lemma} to the hyperbolic blocks of the JSJ decomposition of $M$: there is an integer $x$ such that every hyperbolic block $M_i$ has an $(mx)$-characteristic covering. By Lemma \ref{Hamilton:2:lemma} every Seifert block $M_i$ also has an $(mx)$-characteristic covering.

Therefore, every block $M_i$ has an $(mx)$-characteristic covering, determined by some subgroup $K_i<\pi_1(M_i)$ which intersects every $\pi_1(T_{ij})$ in its $(mx)$-characteristic subgroup.
Recall that our original covering $p\colon \widetilde{M_i}\to M_i$ is determined by some other subgroup $H_i<\pi_1(M_i)$ intersecting every $\pi_1(T_{ij})$ in a subgroup $H_{ij}$ of some index $d_{ij}$. A subgroup of index $d_{ij}$ contains the $(d_{ij})$-characteristic subgroup and hence the $(mx)$-characteristic subgroup since $d_{ij}$ divides $mx$. Therefore $K_i\cap H_i$ also intersects every $\pi_1(T_{ij})$ in its $(mx)$-characteristic subgroup, and hence it induces a $(mx)$-characteristic covering $N_i\to \widetilde{M_i} \to M_i$.

Summing up, we have shown that every covering $\widetilde {M_i}\to M_i$ has a bigger $(mx)$-characteristic covering $N_i\to\widetilde {M_i}\to M_i$, where the constant $mx$ is fixed. Hempel proved \cite{Hem} that $K$-characteristic coverings (with fixed $K$) can be glued together. Namely, there is a finite covering $N\to M$ such that its restriction to $M_i$ consists of finite copies of the covering $N_i\to M_i$.
\end{proof}

\begin{cor} \label{glue:cor}
Let $M$ be an irreducible orientable 3-manifold. For every integer $n_0$ there is a bigger integer $n>n_0$ and a covering $p\colon N\to M$ whose restriction over any torus of the JSJ decomposition of $M$ is a disjoint union of $n$-characteristic coverings.
\end{cor}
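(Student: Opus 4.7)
The plan is to deduce Corollary~\ref{glue:cor} from Proposition~\ref{glue:prop} by observing that the characteristic index produced in the proof of that proposition can be inflated at will, with essentially no additional argument.

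First, I would apply Proposition~\ref{glue:prop} to the trivial input data $p_i = \id_{M_i}$ on every JSJ piece $M_i$ of $M$. This already furnishes a covering $N \to M$ whose restriction over each JSJ torus is a disjoint union of $n$-characteristic coverings, but only for the particular $n$ that emerges from the proof, which need not exceed $n_0$.

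Next, I would revisit the proof of Proposition~\ref{glue:prop} and locate where $n$ arises: one chooses $m$ to be the least common multiple of the boundary indices $d_{ij}$ and the Seifert constants $v_i$ of Lemma~\ref{Hamilton:2:lemma}, then obtains $x$ from Lemma~\ref{Hamilton:lemma} applied with this $m$, and finally concludes that the glued cover is $(mx)$-characteristic. The key observation is that the only features of $m$ used in that argument are that $m$ is a common multiple of the $v_i$ (which is exactly what Lemma~\ref{Hamilton:2:lemma} requires in order to produce $m$-characteristic covers of the Seifert pieces) and of the $d_{ij}$ (so that the given covers on each piece are refined by the new characteristic ones). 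Both conditions are preserved if we multiply $m$ by any positive integer.

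The last step is therefore to run the proof of Proposition~\ref{glue:prop} verbatim, but with $m$ replaced by $(n_0+1)\cdot m$. Lemma~\ref{Hamilton:2:lemma} still applies on the Seifert pieces (any multiple of $v_i$ is allowed), Lemma~\ref{Hamilton:lemma} still produces an integer $x$ for the hyperbolic pieces (applied now with the inflated $m$), and Hempel's glueing procedure still assembles everything into a covering $N \to M$ whose restriction over each JSJ torus is a disjoint union of $n$-characteristic coverings. This time $n = (n_0+1)mx \geq n_0 + 1 > n_0$, as required. I do not foresee any genuine obstacle: the content of the argument is simply the remark that Proposition~\ref{glue:prop} does not output a canonical $n$ but a whole family of admissible values indexed by common multiples of the $v_i$ and $d_{ij}$, and we need only pick one large enough.
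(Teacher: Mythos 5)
Your argument is correct, but it is genuinely different from the one in the paper. You prove the corollary by \emph{reopening} the proof of Proposition~\ref{glue:prop} and observing that its modulus can be inflated: the only constraints on $m$ are that it be a common multiple of the $v_i$ from Lemma~\ref{Hamilton:2:lemma} and of the boundary indices $d_{ij}$, both of which survive multiplication by $n_0+1$, after which Lemma~\ref{Hamilton:lemma} supplies a new $x$ and Hempel's glueing goes through, yielding $n=(n_0+1)mx>n_0$. This is a valid ``proof-mining'' of the proposition, and in effect shows that its statement could have been strengthened to ``for every $n_0$ there is $n>n_0$\dots''. The paper instead keeps Proposition~\ref{glue:prop} as a black box and forces $n$ to be large through the \emph{input}: by residual finiteness of $\pi_1(M_1)$ (available via geometrization) one first chooses a covering $\widetilde{M}_1\to M_1$ of a single JSJ block whose restriction to some boundary torus has degree greater than $n_0^2$; since the covering produced by the proposition factors through this one, and an $n$-characteristic covering of a torus has degree $n^2$, one gets $n^2>n_0^2$, hence $n>n_0$. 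Your route avoids this extra appeal to residual finiteness but requires the reader to re-enter the proof of the proposition rather than use only its statement; the paper's route is a one-line black-box deduction at the cost of one more covering. Both are sound.
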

\begin{proof}
Take a block $M_1$ of the JSJ decomposition of $M$. Thanks to geometrization, the fundamental group $\pi_1(M_1)$ is residually finite, hence there is a covering $\widetilde M_1 \to M_1$ which restricts on some boundary torus of $\widetilde M_1$ to a covering of degree bigger than $n_0^2$. Apply Proposition \ref{glue:prop} to this covering: the result is an $n$-characteristic covering $N\to M$ with $n > n_0$.
\end{proof}

\subsection{JSJ decompositions}

We will prove below that $c_\infty$ is additive on JSJ decompositions. We start by proving the following. 

\begin{lemma} \label{tori:lemma}
Let an irreducible orientable 3-manifold $M$ with (possibly empty) boundary consisting of tori decompose along its JSJ decomposition into some pieces $M_1,\ldots, M_h$. We have
$$c_\infty (M) \leqslant c(M_1)+\ldots + c(M_h).$$
\end{lemma}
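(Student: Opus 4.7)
The plan is to prove the lemma via an averaging argument over large coverings of $M$, combined with a local gluing estimate near each JSJ torus. More precisely, the goal is to show that for every $\varepsilon>0$ there exists a finite covering $p\colon \widetilde M\to M$ of some degree $d$ with $c(\widetilde M)\le d\bigl(\sum_i c(M_i)+\varepsilon\bigr)$; the lemma will then follow by definition of $c_\infty$.

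First I would apply Corollary~\ref{glue:cor}: for any large integer $n$, pick an $n$-characteristic covering $p\colon \widetilde M\to M$ of some degree $d$. Its JSJ decomposition lifts that of $M$, so each piece of $\widetilde M$ is a finite covering of some $M_i$, and each JSJ torus of $M$ has exactly $d/n^2$ preimage tori in $\widetilde M$. Hence $\widetilde M$ has at most $C_M\, d/n^2$ JSJ tori, where $C_M$ denotes the number of JSJ tori of $M$.

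Next, for each $i$ fix a simple spine $P_i\subset M_i$ with $c(M_i)$ vertices and pull it back to every piece of $\widetilde M$ that covers $M_i$; summing over all pieces yields simple spines of the pieces of $\widetilde M$ with total vertex count equal to $d\sum_i c(M_i)$. To combine these into a simple spine of $\widetilde M$ itself, one must add some $2$-dimensional material along each JSJ torus of $\widetilde M$, since the naive union is not a spine: the bicollar of such a torus in the complement is homotopy equivalent to the torus, not to a disjoint union of balls. The key technical claim is that this local gluing can be carried out, uniformly over all tori, with at most a fixed constant $k$ of new vertices per torus; concretely, one adds the torus $T$ itself together with suitable vertical $2$-cells attached to $T$ that cut each one-sided open collar of $T$ into open balls while keeping the resulting polyhedron simple (only triple lines and isolated vertices). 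Granting this, the additional vertex count is at most $k\,C_M\, d/n^2$, so
$$c_\infty(M)\le\frac{c(\widetilde M)}{d}\le \sum_i c(M_i)+\frac{k\,C_M}{n^2}.$$
Letting $n\to\infty$ gives $c_\infty(M)\le\sum_i c(M_i)$.

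The main obstacle is the local construction used in the gluing step: one must verify that $T$ can be incorporated into the spine together with enough vertical $2$-cells to kill the fundamental group of each one-sided bicollar, while simultaneously arranging that the new polyhedron remains simple (no double curves, only triple lines and isolated vertices in the sense of Section~\ref{complexity:section}) and that the number of vertices introduced is bounded by a universal constant, independent of the spines being glued and of $n$. Once this uniform local bound is established, the covering-and-averaging step is routine and uses only tools developed earlier in the paper.
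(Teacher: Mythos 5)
Your overall strategy --- pull back minimal spines of the $M_i$ along an $n$-characteristic covering supplied by Corollary~\ref{glue:cor}, glue them near the lifted JSJ tori, and let $n\to\infty$ so that the gluing cost becomes negligible compared with the degree --- is exactly the route taken in the paper. However, the key technical claim on which your estimate rests is false as stated: the local gluing along a JSJ torus $\widetilde T$ of the cover \emph{cannot} be performed with a number of new vertices bounded by a constant $k$ independent of $n$ and of the spines being glued. Whatever $2$-dimensional material $W$ you insert in the bicollar of $\widetilde T$, in order for the complementary components to become balls $W$ must connect up with the pulled-back spines $\widetilde P_i$, $\widetilde P_j$ on both sides, and it must land on them along curves generating $\pi_1(\widetilde T)$ (otherwise a complementary region retains nontrivial fundamental group). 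Each transverse crossing of such a curve with a triple line of $\widetilde P_i$ produces a vertex. Now the cellularization of $\widetilde T$ induced by the collar of $\widetilde P_i$ is the $n$-characteristic ($n^2$-fold) pullback of a fixed cellularization of $T$ whose $1$-skeleton carries $\pi_1(T)$; a primitive essential curve on $\widetilde T$ represents a class of the form $n(p,q)$ in $\pi_1(T)\cong\matZ\times\matZ$ and therefore has geometric intersection number at least $n$ with some edge-cycle of that cellularization. Hence the gluing cost per torus grows at least linearly in $n$, and no universal constant $k$ exists. (There is also a smaller local issue: adding the torus itself together with two transverse vertical annuli creates a non-simple point where the two attaching curves cross on the torus --- the link there is not the $1$-skeleton of the tetrahedron; the paper avoids this by inserting $Y\times(-1,1)$ for a trivalent spine $Y$ of $T$, which is automatically simple.)

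The good news is that your argument survives once the bound is corrected: an $O(n)$ cost per torus suffices, because there are $d/n^2$ tori of $\widetilde M$ above each JSJ torus of $M$, so the total extra vertex count is $O(d/n)$ and still vanishes after dividing by $d$. This is precisely what the paper does: it first builds a spine $Q$ of $M$ by inserting $Y\times(-1,1)$ along each torus, pulls $Q$ back to the cover, and then discards most of the lifted graph $\widetilde Y\subset \widetilde T$, keeping a sub-spine $\widetilde Y'$ whose contribution is $O(n)$ vertices per torus (the counts $2hn(v_B+v_D)+hv_C$ in the proof), which yields $c_\infty(M)\leqslant v_A+2(v_B+v_D)/n+v_C/n^2$ and hence the lemma.
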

\begin{proof}
Given some simple spines $P_1, \ldots, P_h$ for $M_1, \ldots, M_h$, it is easy to construct a simple spine $Q$ for $M$. Set $P = P_1\sqcup \ldots \sqcup P_h$. Recall that $M_i\setminus P_i$ consists of an open collar of $\partial M_i$ plus maybe some open balls. Therefore $M\setminus P$ consists of one product neighborhood $T\times (-1,1)$ of each torus $T$ of the decomposition, plus maybe some open balls. To build a spine for $M$ it suffices to choose a simple spine $Y$ for $T$ (\emph{i.e.}~$Y$ is a 1-dimensional polyhedron with only 3-valent vertices and $T\setminus Y$ consists of open discs) and add to $P$ one product $Y\times (-1,1)$ inside each such product neighborhood $T\times (-1,1)$. If $Y\subset T$ is in generic position, the resulting polyhedron $Q$ is still simple. Now $M\setminus Q$ consists of open balls only: those that were in $M\setminus P$, plus one for each torus of the decomposition. Therefore $Q$ is a spine for $M$.

\begin{figure}
\begin{center}
\includegraphics[width = 12.5 cm] {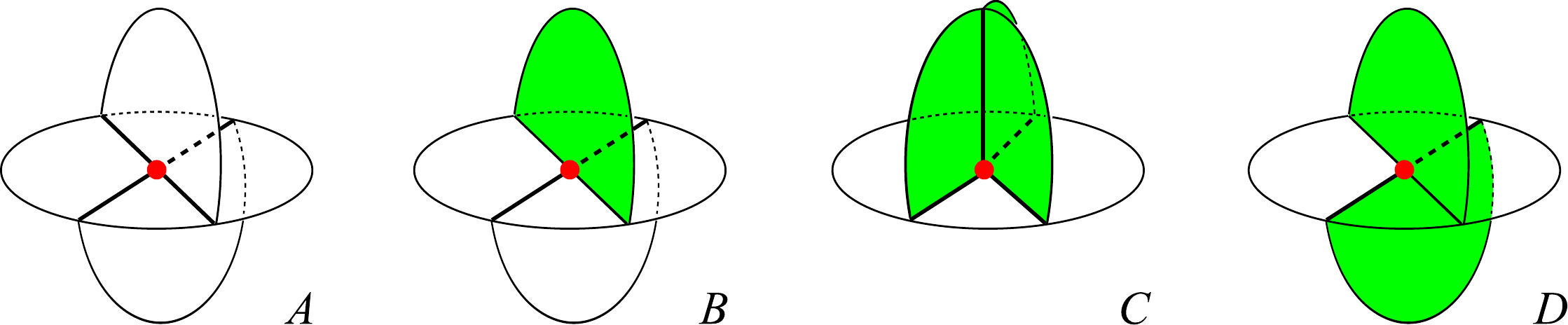}
\nota{We colour in green the regions of the inserted portions $Y\times (-1,1)$. There are four types of vertices $A$, $B$, $C$, and $D$ in the spine $Q$, according to the colours of the incident regions.}
\label{types:fig}
\end{center}
\end{figure}

Colour in green the regions in the products $Y\times (-1,1)$. It is easy to check that there are now four types $A, B, C, D$ of vertices in $Q$ according to the colours of the incident regions, as shown in Fig.~\ref{types:fig}. The vertices of type $A$ are those of $P$. Let $v_A$, $v_B$, $v_C$, and $v_D$ be the number of vertices of type $A$, $B$, $C$, and $D$ in $Q$. Consider one inserted piece $Y\times (-1,1) \subset T\times (-1,1)$ inside a collar separating two (possibly coinciding) polyhedra $P_i$ and $P_j$. Pull back the cellularization of $P_i$ on $T$ via the collar, as in Fig.~\ref{tori:fig}: the four types of vertices are also shown in the figure. 

\begin{figure}
\begin{center}
\includegraphics[width = 9 cm] {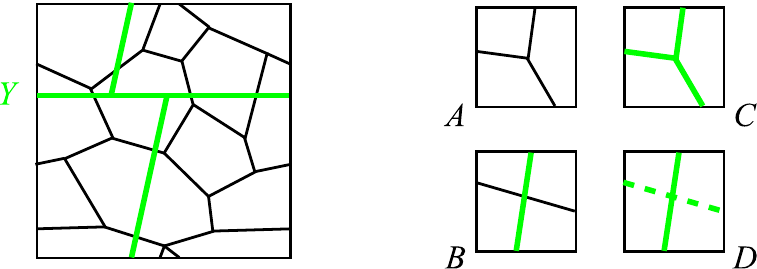}
\nota{The cellularization of $T$ induced by the collar map $T\to P_i$, and the spine $Y$ of $T$ coloured in green. The four types of vertices $A$, $B$, $C$, $D$.}
\label{tori:fig}
\end{center}
\end{figure}

Corollary \ref{glue:cor} ensures that for every $n_0>0$ there is a natural number $n>n_0$ and a covering $p\colon N\stackrel{d}\to M$ whose restriction over each torus $T$ of the JSJ decomposition is a disjoint union of some $h$ distinct $n$-characteristic coverings. We thus have $d = hn^2$. The pre-image $\widetilde Q = p^{-1}(Q) \subset N$ is a simple spine of $N$, and we give each region of $\widetilde Q$ the same colour of its image in $Q$. We thus get $dv_A, dv_B, dv_C,$ and $dv_D$ vertices of type $A$, $B$, $C$, $D$ respectively.

\begin{figure}
\begin{center}
\includegraphics[width = 12.5 cm] {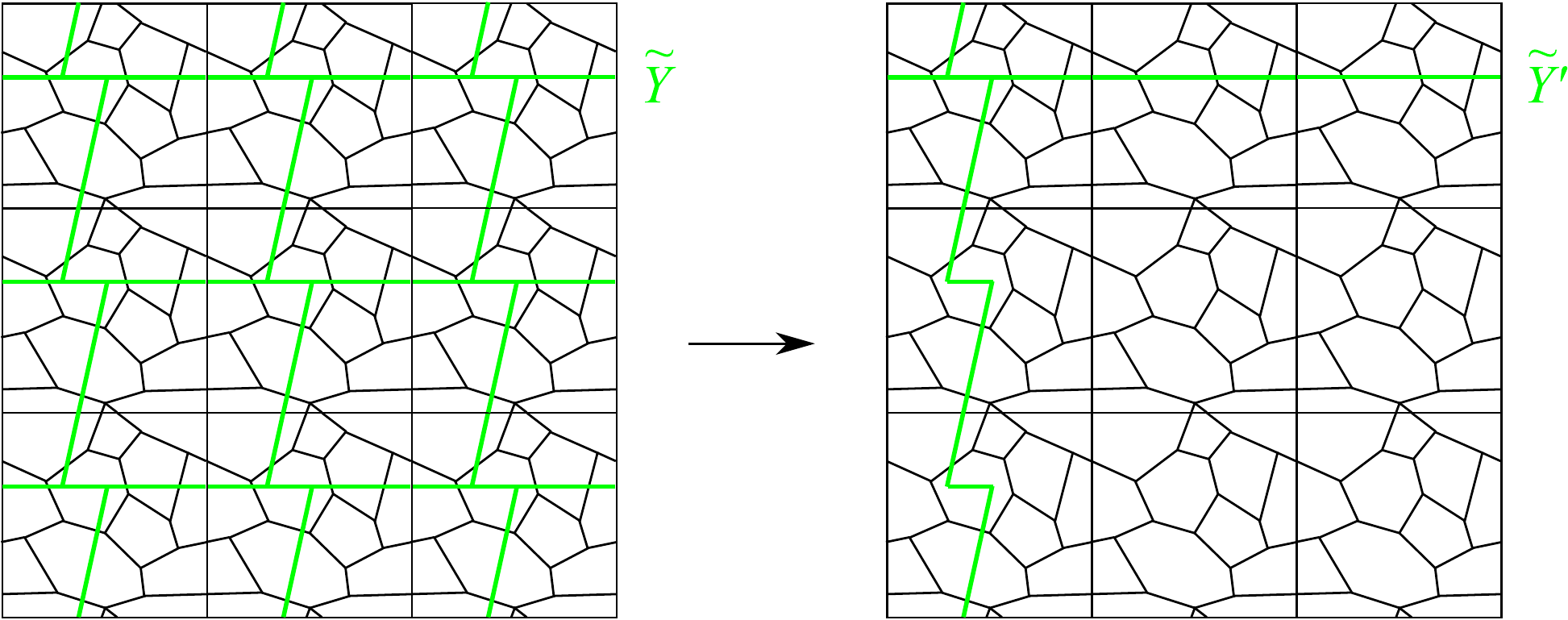}
\nota{A 3-characteristic covering $\widetilde T$ of some torus $T$ of the JSJ decomposition. The spine $Y$ lifts to the green spine $\widetilde Y$ shown in the left picture. We can eliminate most of its edges and still get a spine $\widetilde Y'$ of $\widetilde T$.}
\label{tori2:fig}
\end{center}
\end{figure}

Let $T\subset M$ be one torus of the decomposition. One component $\widetilde T$ of $p^{-1}(T)$ is shown in Fig.~\ref{tori2:fig}, containing the lifted spine $\widetilde Y$. If $T\setminus Y$ consists of one disc only as in Fig.~\ref{tori:fig}, now $\widetilde T \setminus \widetilde Y$ consists of $n^2$ discs. As shown in the figure, we can replace  $\widetilde Y$ with a simpler spine $\widetilde Y'\subset \widetilde Y \subset \widetilde T$, whose complement in $\widetilde T$ consists of only one disc. We then modify $\widetilde Q$ by substituting the product $\widetilde Y \times (0,1)$ with $\widetilde Y' \times (0,1)$. The resulting polyhedron $\widetilde Q'\subset \widetilde Q$ is still a spine of $N$, with less vertices than $\widetilde Q$. 

We estimate the number of vertices for $\widetilde Q'$. Recall that $d = hn^2$. It is clear from the picture that, after the removal of such coloured faces, the number of vertices of type $A$, $B$, $C$, $D$ is respectively not greater than $dv_A$, $2hnv_B$, $hv_C$, and $2hnv_D$. Therefore 
$$c(N) \leqslant dv_A + 2hn(v_B+v_D) + hv_C.$$ 

Suppose that in our construction we started with some spines $P_1,\ldots, P_h$ with minimal number of vertices for $M_1,\ldots, M_h$. Then $v_A$ equals $c(M_1)+\ldots + c(M_h)$ and we get
$$c_\infty (M) \leqslant \frac{c(N)}{d} \leqslant v_A + \frac{2(v_B+v_D)}{n} + \frac{v_C}{n^2}.
$$
Since for every $n_0$ there is $n>n_0$ which satisfies this inequality, we get 
$$c_\infty(M) \leqslant v_A = c(M_1) +\ldots + c(M_h).$$
\end{proof}

Finally we can prove the following.

\begin{prop} Let an irreducible orientable 3-manifold $M$ with (possibly empty) boundary consisting of tori decompose along its JSJ decomposition into some pieces $M_1,\ldots, M_h$. We have
$$c_\infty (M) = c_\infty(M_1)+\ldots + c_\infty(M_h).$$
\end{prop}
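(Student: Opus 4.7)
My plan is to prove the two inequalities $c_\infty(M)\leqslant\sum_i c_\infty(M_i)$ and $c_\infty(M)\geqslant\sum_i c_\infty(M_i)$ separately. The $\geqslant$ direction will follow quickly from Matveev's monotonicity of complexity under cutting along incompressible surfaces (Theorem~\ref{matveev:teo}), while the $\leqslant$ direction will be obtained by adapting the proof of Lemma~\ref{tori:lemma} so that one starts from efficient spines of coverings of the $M_i$'s, rather than from spines of the $M_i$'s themselves.

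For $\geqslant$, I would take any finite covering $f\colon\widetilde M\stackrel{d}{\to}M$. Since $M$ is irreducible with JSJ tori $F$, the manifold $\widetilde M$ is irreducible and $\widetilde F=f^{-1}(F)$ is an incompressible collection of tori in $\widetilde M$. Theorem~\ref{matveev:teo} gives $c(\widetilde M)\geqslant c(\widetilde M/\!/\widetilde F)$. The cut manifold splits as a disjoint union $\bigsqcup_{i,j}\widetilde M_i^j$ of connected components, each $\widetilde M_i^j$ being a finite covering of some $M_i$ of some degree $e_i^j$ with $\sum_j e_i^j=d$. Additivity of $c$ on disjoint unions, together with $c(\widetilde M_i^j)\geqslant e_i^j\,c_\infty(M_i)$ from the definition of $c_\infty$, then yields
$$c(\widetilde M)\geqslant\sum_{i,j}c(\widetilde M_i^j)\geqslant\sum_i c_\infty(M_i)\sum_j e_i^j = d\sum_i c_\infty(M_i),$$
and the conclusion follows upon dividing by $d$ and taking the infimum over $\widetilde M$.

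For $\leqslant$, I fix $\epsilon>0$ and pick coverings $\widetilde M_i\stackrel{d_i}{\to}M_i$ with $c(\widetilde M_i)/d_i<c_\infty(M_i)+\epsilon$. By Proposition~\ref{glue:prop} combined with Corollary~\ref{glue:cor}, for every sufficiently large $n$ there exist coverings $q_i\colon N_i\to\widetilde M_i$ and a covering $p\colon N\stackrel{d}{\to}M$ such that $p^{-1}(M_i)$ consists of $k_i$ copies of $N_i$ (so $k_i\,\deg(N_i\to M_i)=d$) and $p$ is $n$-characteristic on every preimage of a JSJ torus. I would then repeat the construction of Lemma~\ref{tori:lemma}, applied to the covering $N\to M$ directly: take minimal spines of the $k_i$ copies of $N_i$, glue them together by inserting a product $Y\times(-1,1)$ over a simple spine $Y$ of each preimage torus, and simplify those products via the $n$-characteristic structure exactly as in the lemma. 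The same bookkeeping yields a spine of $N$ with at most $\sum_i k_i c(N_i)+O(d/n)$ vertices. Using $c(N_i)\leqslant\deg(N_i\to\widetilde M_i)\cdot c(\widetilde M_i)$ from Theorem~\ref{immediate:teo} and the identity $k_i\,\deg(N_i\to\widetilde M_i)=d/d_i$, this gives
$$\frac{c(N)}{d}\leqslant\sum_i\frac{c(\widetilde M_i)}{d_i}+O(1/n)<\sum_i c_\infty(M_i)+h\epsilon+O(1/n),$$
and letting $n\to\infty$ followed by $\epsilon\to 0$ yields the desired inequality. The main obstacle is verifying that the ``$B,C,D$-type'' error terms produced by the $Y$-products near the preimage tori really are of order $O(d/n)$ uniformly in $n$: this is precisely the bookkeeping performed in the proof of Lemma~\ref{tori:lemma}, which is the reason that lemma is presented as a separate stepping stone to the present proposition rather than as its corollary.
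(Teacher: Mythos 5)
Your ``$\geqslant$'' direction is correct and coincides with the paper's: it is exactly the earlier proposition that cutting an irreducible manifold along an incompressible surface does not increase $c_\infty$ (Theorem~\ref{matveev:teo} applied in each cover, plus additivity of $c$ and $c_\infty$ on disjoint unions). The gap is in the ``$\leqslant$'' direction, precisely at the point you yourself flag as ``the main obstacle''. In Lemma~\ref{tori:lemma} the error terms are controllable because the spine $Q$ is built \emph{downstairs}: the numbers $v_B,v_C,v_D$ are fixed constants attached to a fixed spine of the base manifold, and only afterwards does one pass to an $n$-characteristic cover, lift $Q$, and prune the lifted green graph $\widetilde Y$ (which has $\sim n^2$ edges over each torus) down to a sub-spine $\widetilde Y'$ using only $\sim n$ of them; the surviving error is then of order $(d/n)(v_B+v_D)+(d/n^2)v_C$ with $v_B,v_C,v_D$ \emph{independent of $n$}. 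In your construction the spines being glued are minimal spines of the pieces $N_i$ of the top cover $N$ itself, and the green spines $Y$ are chosen directly in the preimage tori, so there is no lifted graph to prune. The number of $B$- and $D$-type vertices is then governed by the geometric intersection of $Y$ with the cellularization that the minimal spine of $N_i$ induces on its boundary torus, and this is not bounded independently of $N_i$: if that cellularization contains $N$ parallel essential curves of some slope, every spine of the torus must cross them at least $N$ times. Since $N_i$ varies with $n$, the claimed $O(d/n)$ bound ``uniformly in $n$'' is unjustified, and the error could be comparable to the main term $\sum_i k_i c(N_i)$. (A smaller issue: Proposition~\ref{glue:prop} produces \emph{some} characteristic exponent, not an arbitrarily large one; making it large requires pre-composing with a large boundary cover as in Corollary~\ref{glue:cor}, which you gesture at but do not carry out.)

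The repair is the paper's two-step structure, which decouples the two limits instead of running them simultaneously. For each fixed choice of efficient covers $M_i^j\to M_i$, apply Proposition~\ref{glue:prop} to obtain a glued cover $M^j\stackrel{d^j}{\to}M$ whose JSJ pieces are $k_i^j$ copies of $M_i^j$; then apply Lemma~\ref{tori:lemma} \emph{as a black box to $M^j$}, obtaining $c_\infty(M^j)\leqslant\sum_i k_i^j c(M_i^j)$ --- internally the lemma takes a further $n$-characteristic cover of $M^j$ with the spine data of $M^j$ held fixed as $n\to\infty$, which is what makes the bookkeeping work. Finally, since $c_\infty$ is a characteristic number, $c_\infty(M)=c_\infty(M^j)/d^j\leqslant\sum_i c(M_i^j)/d_i^j$, and letting $j\to\infty$ gives the inequality. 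No single covering ever needs to be simultaneously efficient over each piece and highly characteristic.
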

\begin{proof}
We already know that by cutting along incompressible surfaces we cannot increase the stable complexity, hence $c_\infty (M)\geqslant c_\infty(M_1)+\ldots + c_\infty(M_h)$. We need to prove the converse inequality. 

Let $p_i^j\colon M_i^j\stackrel{d_i^j}{\to} M_i$ be a minimizing sequence of coverings for $M_i$, for each $i=1,\ldots, h$. By hypothesis we have
$$\frac{c(M_i^j)}{d_i^j} \to c_\infty (M_i)$$
as $j\to \infty$, for each $i=1,\ldots, h$.

Fix $j$. By Proposition \ref{glue:prop}, up to replacing each $p_i^j$ with a bigger covering (which we still denote by $p_i^j$) we can suppose that there is a covering $p\colon M^j\stackrel{d^j}{\to} M$ which restricts on $M_i$ to some $k_i^j$ disjoint copies of $p_i^j$, for every $i$. We necessarily have $d^j = k_i^jd_i^j$. Lemma \ref{tori:lemma} impies that
$$c_\infty (M^j) \leqslant k_1^jc(M_1^j) + \ldots + k_h^jc(M_h^j).$$
We divide both expressions by $d^j$ and get
$$c_\infty(M) \leqslant \frac{c(M_1^j)}{d_1^j} + \ldots + \frac{c(M_h^j)}{d_h^j}.$$
Since $p_i^j$ are minimizing sequences for all $i$, by sending $j\to \infty$ we get
$$c_\infty(M) \leqslant c_\infty(M_1) + \ldots + c_\infty(M_h).$$
\end{proof}

\subsection{Seifert manifolds}
We have proved that the stable complexity of an irreducible 3-manifold is the sum of the stable complexity of the pieces in its JSJ decomposition. We can therefore concentrate our attention to Seifert and hyperbolic manifolds. 

\begin{prop} Let $M$ be a compact Seifert manifold, with or withour boundary. We have $c_\infty(M)=0$.
\end{prop}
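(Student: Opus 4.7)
The plan is to show that any compact Seifert manifold $M$ admits finite coverings of arbitrarily large degree with $c/d\to 0$. Combined with $c_\infty(M)\geq 0$, this will force $c_\infty(M)=0$. Throughout I will use that $c_\infty$ is a characteristic number, so it may be computed on any convenient finite cover of $M$.

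First, I would dispose of the easy cases. If $\pi_1(M)$ is finite, then $M$ is covered by $S^3$; since $c(S^3)=0$ by Theorem~\ref{matveev:teo}, this gives $c_\infty(M)=0$. Otherwise, after passing to a finite cover I may assume $M$ is an orientable $S^1$-bundle over an orientable surface $\Sigma$ with no exceptional fibers (eliminating the orbifold singularities of the base). If $\Sigma$ has non-empty boundary, then $H^2(\Sigma;\matZ)=0$, forcing the bundle to be trivial: $M=\Sigma\times S^1$. The same conclusion holds if $\Sigma$ is closed with vanishing Euler number. In both trivial-bundle cases, the $d$-fold fiberwise self-covering $(x,z)\mapsto(x,z^d)$ yields $c_\infty(M)\le c(M)/d\to 0$.

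The main case is a non-trivial circle bundle $M$ over a closed orientable $\Sigma_g$ with Euler number $e\neq 0$. My plan is to combine base and fiber covers: take a $d$-fold cyclic base cover $\Sigma_{g_d}\to\Sigma_g$ (with $g_d=1+d(g-1)$) and pull back to obtain $M_d\to\Sigma_{g_d}$ of Euler number $de$; then take an $|de|$-fold fiberwise cyclic cover (which exists since $|de|$ divides the Euler number $de$) to obtain $M_d'\to\Sigma_{g_d}$ of Euler number $\pm 1$. The total covering degree of $M_d'\to M$ is $d^2|e|$. If I can establish the bound $c(M_d')=O(g_d)=O(d)$, then $c(M_d')/(d^2|e|)=O(1/d)\to 0$ and the proof is complete.

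The hard part will be proving the bound $c(M_d')=O(g_d)$ by an explicit spine construction. I would decompose $\Sigma_{g_d}=\Sigma'\cup D$ (with $D$ a disk) and $M_d'=(\Sigma'\times S^1)\cup V$, where $V$ is a solid torus attached with slope $(1,\pm 1)$. The construction begins with the product spine $Y\times S^1$ of $\Sigma'\times S^1$, where $Y\subset\Sigma'$ is a trivalent spine with $4g_d-2$ vertices; each such vertex smooths to a triple-edge circle in the product, contributing \emph{no} type-(1) vertices. One then completes this to a spine of $M_d'$ by adjoining a meridian disk of $V$ together with an auxiliary 2-cell (a perturbed section of $\Sigma'\times S^1$) to kill the residual $\pi_1$ of the complement. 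The true vertices then appear only at isolated transverse intersections of the section with $Y\times S^1$ and with $\partial D_m$ on the boundary torus, and a generic choice keeps their count $O(g_d)$. The main technical obstacle will be ensuring that the resulting polyhedron is genuinely simple---only triple edges apart from isolated cones on the $1$-skeleton of a tetrahedron---and that its complement consists of open balls; this reduces to a careful perturbation of the section to meet $Y\times S^1$ transversely and to combinatorial arrangement of the attaching curves on $\partial V=T^2$.
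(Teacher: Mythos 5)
Your covering scheme is exactly the one the paper uses: pass to a finite cover that is an orientable circle bundle over an orientable surface, dispose of the trivial-bundle (and boundary) cases by fiberwise self-coverings, and for a closed nontrivial bundle combine a degree-$d$ base cover with a fiberwise unwrapping so that the covering degree grows quadratically in $d$ while the complexity of the cover grows only linearly. The difference is how the linear complexity bound for the covers is obtained. The paper simply quotes the Martelli--Petronio estimate $c(\Sigma,e)\leqslant \max\{0,e-1+\chi(\Sigma)\}-6(\chi(\Sigma)-1)\leqslant e+6\chi_-(\Sigma)+6$, unwraps the fiber only $d$ times (so the Euler number stays equal to $e$, degree $d^2$), and gets $c_\infty\leqslant (e+6d\chi_-(\Sigma)+6)/d^2\to 0$. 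With that citation your extra normalization of the Euler number to $\pm 1$ (and the extra factor $|e|$ in the degree) is unnecessary, though it is harmless.

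The genuine issue is that in your write-up the load-bearing step is precisely the part you leave as a sketch: the bound $c(M_d')=O(g_d)$ via an explicit simple spine. The construction can be made to work, but two points you do not address are exactly where the difficulty sits. First, the auxiliary ``perturbed section'' is not needed: if $Y$ is a trivalent spine of $\Sigma'$ with $4g_d-2$ vertices, then $M_d'\setminus (Y\times S^1)$ is already an open solid torus, so adjoining a single meridian disc $D$ (with $\partial D$ pushed through the collar onto $Y\times S^1$) gives ball complement; introducing a section whose boundary lies on $\partial(\Sigma'\times S^1)$, which is not part of the polyhedron, muddies the verification of simplicity without helping. Second, and more importantly, the vertex count is not just ``transverse intersections with the singular circles'': true vertices also arise at every self-crossing of the attaching curve $\partial D$ inside the $2$-strata of $Y\times S^1$, and for a filling slope winding $e'$ times in the fiber direction these self-crossings contribute on the order of $|e'|$ extra vertices (this is exactly why the complexity of $(\Sigma,e)$ grows with $e$). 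Your reduction to Euler number $\pm 1$ is what makes the pushed-off meridian embedded, so that only the $\approx 3(4g_d-2)$ crossings with the vertex circles remain; but you never make this point, and without it the claimed count ``$O(g_d)$ by a generic choice'' is unjustified. So either carry out the spine construction with these two points settled, or replace the whole ``hard part'' by the citation of the complexity bound from Martelli--Petronio, as the paper does.
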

\begin{proof}
A Seifert manifold has a finite covering $M$ which is an $S^1$-bundle over an orientable surface $\Sigma$ with some Euler number $e\geqslant 0$. If the manifold has boundary then $e=0$ and the bundle is a product $\Sigma \times S^1$. Since this manifold covers itself with arbitrarily high degree, it clearly has stable complexity zero. If the manifold $M$ is closed, we denote it by $(\Sigma, e)$. Its complexity is at most \cite{MP}:
$$c(\Sigma, e)\leqslant \max \{0, e-1+\chi(\Sigma)\} - 6(\chi(\Sigma)-1) \leqslant e+6\chi_-(\Sigma)+6.$$
A degree-$d$ covering of surfaces $\widetilde\Sigma \stackrel d\to \Sigma$ induces a covering $(\widetilde \Sigma, de) \stackrel d\to (\Sigma, e)$. By unwrapping the fiber we can construct another covering $(\widetilde \Sigma, e) \stackrel d\to (\widetilde\Sigma, de)$ and by composing them we get
$$(\widetilde \Sigma, e) \stackrel{d^2} \to (\Sigma, e).$$
Therefore 
$$c_\infty(\Sigma,e) \leqslant \frac{c(\widetilde\Sigma,e)}{d^2} \leqslant \frac{e+6\chi_-(\widetilde \Sigma) + 6}{d^2} = \frac{e+6d\chi_-(\Sigma) + 6}{d^2} \to 0$$
as $d\to \infty$.
\end{proof}

\begin{cor} A graph manifold has stable complexity zero. \end{cor}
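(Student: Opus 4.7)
The plan is to simply assemble the two preceding results. By definition, a graph manifold $M$ is an irreducible $3$-manifold whose JSJ decomposition consists entirely of Seifert fibered pieces $M_1,\ldots, M_h$ (possibly $h=1$, and possibly with the whole $M$ itself Seifert). By the additivity of $c_\infty$ on the JSJ decomposition, established in the previous proposition, we have
\[
c_\infty(M)=c_\infty(M_1)+\cdots +c_\infty(M_h).
\]
The preceding proposition on Seifert manifolds gives $c_\infty(M_i)=0$ for every $i$, so the sum vanishes and $c_\infty(M)=0$.

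If one wishes to allow graph manifolds that are not irreducible (so that $M$ decomposes nontrivially as a connected sum of graph manifolds), one first applies additivity on connected sums to reduce to the irreducible case, and then runs the argument above on each prime summand. The only point to double-check is that the hypotheses of the JSJ-additivity proposition apply to each prime summand, which they do since the pieces have boundary consisting of tori (or are closed Seifert). No further ingredients are needed; the statement is a direct corollary and there is no serious obstacle.
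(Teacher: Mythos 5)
Your proof is correct and is exactly the argument the paper intends: combine the additivity of $c_\infty$ on connected sums and on the JSJ decomposition with the vanishing of $c_\infty$ on compact Seifert manifolds established in the immediately preceding proposition. The extra remark about reducing non-irreducible graph manifolds to their prime summands is a reasonable clarification and introduces no gap.
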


\subsection{Hyperbolic $3$-manifolds}
We can now turn to hyperbolic 3-manifolds. Note that, since $c_\infty$ is a characteristic number, once we know the stable complexity of a manifold we also know the stable complexity of any manifold in its commensurability class. We start by extending Proposition \ref{smaller:prop} to the cusped case.

\begin{prop} Let $M$ be a compact 3-manifold, whose interior admits a complete hyperbolic structure with finite volume. We have
$$\|M\| \leqslant c_\infty(M).$$
\end{prop}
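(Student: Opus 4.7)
The plan is to reduce the statement to the inequality $\|N\|\leqslant c(N)$ for each compact 3-manifold $N$ whose interior admits a complete finite-volume hyperbolic structure. Let $p_i\colon\widetilde M_i\stackrel{d_i}{\to}M$ be a minimizing sequence of coverings, so that $c(\widetilde M_i)/d_i\to c_\infty(M)$. Because $p_i$ restricts to a covering of the toroidal boundary and the argument used by Gromov for the multiplicativity of $\|\cdot\|$ under finite coverings carries over verbatim in the relative cusped setting, each $\widetilde M_i$ satisfies $\|\widetilde M_i\|=d_i\|M\|$. Consequently, once the inequality $\|\widetilde M_i\|\leqslant c(\widetilde M_i)$ is established, dividing by $d_i$ and letting $i\to\infty$ yields $\|M\|\leqslant c_\infty(M)$.

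To prove the key inequality $\|N\|\leqslant c(N)$ for such an $N$, I would use the standard duality from Matveev's theory: a simple spine $P$ of $N$ with $v=c(N)$ true vertices dualizes to a decomposition of $N$ into $v$ truncated tetrahedra, and hence to an ideal (loose) triangulation of $\interior{N}$ with $v$ tetrahedra, whose ideal vertices correspond to the toroidal cusp ends. Lifting each tetrahedron to $\matH^3$, replacing it by the geodesic $3$-simplex spanned by the lifts of its vertices in $\overline{\matH^3}$, and projecting back, I obtain $v$ straightened geodesic simplices. By Theorem~\ref{maximal:teo}, each such simplex has volume at most $v_3$. The analogue of Lemma~\ref{algvol} adapted to the cusped setting (exactly as in the references cited around Theorem~\ref{prop:teo}) ensures that the straightened positive simplices cover $\interior{N}$, so
$$\Vol(\interior{N})\leqslant v\cdot v_3=c(N)\cdot v_3,$$
and dividing by $v_3$ together with Theorem~\ref{prop:teo} gives $\|N\|\leqslant c(N)$.

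The main obstacle is technical: making the spine-to-ideal-triangulation duality work when $P$ is not special, and setting up straightening carefully in the presence of cusps. If $N\setminus P$ contains interior balls in addition to the collar of $\partial N$, the dual decomposition will also contain non-ideal vertices, and correspondingly some of the straightened tetrahedra will be honestly bounded geodesic tetrahedra rather than ideal ones. This does not affect the volume estimate, because Theorem~\ref{maximal:teo} bounds the volume of any geodesic $3$-simplex in $\overline{\matH^3}$ by $v_3$ regardless of the nature of its vertices, and the covering argument from Subsection~\ref{stmap} goes through unchanged. The subtler point of producing a continuous straightening map whose ideal vertices are sent to the correct cusps is precisely the content of the cusped straightening developed in~\cite{Francaviglia1,FriPag,FM}, and I would invoke those references to close the argument.
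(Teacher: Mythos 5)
Your proposal is correct and follows essentially the same route as the paper: reduce to $\|N\|\leqslant c(N)$ for each finite covering $N$, and obtain that inequality by realizing $c(N)$ as the number of tetrahedra in an ideal triangulation and straightening. The one step you flag as "the main obstacle" (duality for non-special spines) is exactly what the paper outsources to Matveev's theorem that for such cusped hyperbolic manifolds $c(N)$ equals the minimal number of tetrahedra in an ideal triangulation, so your appeal to "Matveev's theory" is the right (and the paper's own) resolution.
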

\begin{proof}
The closed case was considered in Proposition \ref{smaller:prop}, so we suppose $M$ has boundary.
As shown by Matveev \cite{Mat:book}, if $N$ is  
 a compact 3-manifold whose interior admits a complete hyperbolic structure with finite volume, then
the complexity $c(N)$ equals the minimal number of tetrahedra in an ideal triangulation of $N$. 
Moreover, by straightening the simplices of such an ideal triangulation it is easily seen that
$\vol (N)\leqslant v_n c(N)$, whence $\| N\| \leqslant c(N)$ by Theorem~\ref{prop:teo}.
Therefore $\|N\|\leqslant c(N)$ for every covering $N$ of $M$, and we conclude as in Proposition \ref{smaller:prop}.
\end{proof}
We can calculate $c_\infty$ only on one (very special) commensurability class of hyperbolic cusped 3-manifolds. 

\begin{prop} If $M$ is commensurable with the figure-eight knot complement 
$$\|M\| = c_\infty(M).$$
\end{prop}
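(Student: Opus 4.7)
The plan is to first verify the equality $c_\infty(N) = \|N\|$ for $N$ the figure-eight knot complement itself, and then promote this to the whole commensurability class using the fact that both $c_\infty$ and $\|\cdot\|$ are characteristic numbers.

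For the figure-eight knot complement $N$, the classical ideal triangulation by two regular ideal tetrahedra yields an ideal triangulation with two tetrahedra. Dualizing this ideal triangulation produces a simple spine of $N$ with two vertices, so $c(N) \leqslant 2$. On the other hand, by Theorem~\ref{prop:teo} we have $\|N\| = \vol(N)/v_3 = 2v_3/v_3 = 2$. Combining this with the previous proposition (the cusped version of $\|M\|\leqslant c_\infty(M)$), we obtain
$$
2 = \|N\| \leqslant c_\infty(N) \leqslant c(N) \leqslant 2,
$$
so all of these quantities coincide, and in particular $c_\infty(N) = \|N\|$.

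Now let $M$ be commensurable with $N$, so there exists a compact 3-manifold $\widetilde M$ (whose interior admits a complete finite-volume hyperbolic structure) together with finite coverings $\widetilde M \stackrel{d_1}{\to} M$ and $\widetilde M \stackrel{d_2}{\to} N$. Since both $\|\cdot\|$ and $c_\infty$ are characteristic numbers, we have
$$
d_1 \|M\| = \|\widetilde M\| = d_2 \|N\|, \qquad d_1 c_\infty(M) = c_\infty(\widetilde M) = d_2 c_\infty(N).
$$
The equality $c_\infty(N) = \|N\|$ established above then forces $c_\infty(\widetilde M) = \|\widetilde M\|$, and dividing by $d_1$ gives $c_\infty(M) = \|M\|$, as required.

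No step here is especially delicate: the only non-formal input is the existence of the two-tetrahedra ideal triangulation of the figure-eight knot complement and the computation of its hyperbolic volume, both of which are classical. Everything else is a direct application of the multiplicativity of $c_\infty$ and $\|\cdot\|$ under finite coverings and of the already-established sandwich inequality $\|M\|\leqslant c_\infty(M)\leqslant c(M)$ in the cusped hyperbolic setting.
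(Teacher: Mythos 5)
Your proof is correct and follows essentially the same route as the paper: compute $\|N\|=c(N)=2$ for the figure-eight knot complement via its two-tetrahedra ideal triangulation, sandwich $c_\infty(N)$ between $\|N\|$ and $c(N)$ using the cusped version of $\|M\|\leqslant c_\infty(M)$, and then transfer the equality to the whole commensurability class by multiplicativity of both invariants under finite coverings. The only difference is that you spell out the common-cover argument that the paper leaves implicit.
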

\begin{proof}
The figure-eight knot complement $N$ is obtained by gluing two ideal regular tetrahedra, each of volume $v_3$. We have $\Vol(N) = 2v_3$ and hence $\|N\| = 2$. We also have $c(N)=2$. The (very special) equality $c(N) = \|N\|$ together with $\|N\|\leqslant c_\infty(N) \leqslant c(N)$ implies $c_\infty (N) = \|N\|$. Since both $c_\infty$ and $\|\cdot \|$ are characteristic numbers, they coincide on the whole commensurability class of $N$.
\end{proof}

Finally, we can say something concerning Dehn filling.
\begin{prop} \label{filling:prop}
Let $N$ be any compact 3-manifold and $M$ be obtained from $N$ via Dehn filling one boundary torus of $N$. We have
$$c_\infty (M) \leqslant c(N).$$
\end{prop}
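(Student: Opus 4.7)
The plan is to extend a minimal simple spine $P$ of $N$ across the attached solid torus $V$, paying a bounded additive cost for the Dehn filling, and then absorb this cost by passing to large-degree covers of $M$.

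First, starting from a minimal simple spine $P$ of $N$ (so $P$ has $c(N)$ vertices), I would construct a simple spine $Q$ of $M$ as follows. The complement $N\setminus P$ consists of an open collar of $\partial N$ together with finitely many open balls; the part of this collar adjacent to $T$ has a torus-shaped ``wall'' on the $P$-side, which via the collar corresponds to $T$ itself. In $M=N\cup_T V$, this collar joins with $V$ to form a solid-torus region $R$, and a meridional disk $D$ of $R$ converts $R$ into a ball. Pushing $\partial D$ through the collar onto a two-dimensional region of $P$ (slightly perturbing $P$ near $T$ if necessary to guarantee the existence of such a region) and setting $Q=P\cup D$ gives a simple polyhedron that is a spine of $M$: along $\partial D$ three sheets of the polyhedron meet, yielding a type-$2$ edge, and each complementary component is either an open ball or part of a collar of $\partial M$. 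The new vertices of $Q$ are precisely the transverse crossings of $\partial D$ with the $1$-skeleton of $P$, a finite number $K$ depending only on $P$ and the chosen push, which gives the weak inequality $c(M)\leqslant c(N)+K$.

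To upgrade this to $c_\infty(M)\leqslant c(N)$, for each $n\geqslant 1$ I would construct a finite covering $p\colon\widetilde M\to M$ of some degree $d$ such that every component $V_j$ of the preimage $\widetilde V=p^{-1}(V)$ covers $V$ with degree at least $n$. Writing $\gamma\in\pi_1(M)$ for the class of the core of $V$ and $H\leqslant\pi_1(M)$ for the subgroup defining $p$, the components of $\widetilde V$ correspond to orbits of $\langle\gamma\rangle$ acting on the coset space $H\backslash\pi_1(M)$, with the degree of each $V_j$ equal to the size of the corresponding orbit; the condition thus amounts to requiring that no conjugate of $\gamma^i$ belong to $H$ for $0<i<n$. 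Such a subgroup is produced using residual finiteness of $\pi_1(M)$ (Hempel, via geometrization): for each nontrivial power $\gamma^i$ separate it from the identity by a finite-index subgroup, then intersect finitely many of their conjugates. Since the meridian of each $V_j$ projects with degree $1$ onto $\mu$, pulling back $P$ to a spine $\widetilde P$ of $\widetilde N$ (with $d\cdot c(N)$ vertices) and capping each $V_j$ with a single meridional disk (each contributing exactly $K$ new vertices, by the construction of the previous paragraph applied within the cover) yields a simple spine $\widetilde Q$ of $\widetilde M$ satisfying
$$
c(\widetilde M)\;\leqslant\;d\cdot c(N)+kK\;\leqslant\;d\!\left(c(N)+\tfrac{K}{n}\right),
$$
where $k\leqslant d/n$ is the number of components of $\widetilde V$. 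Dividing by $d$ and letting $n\to\infty$ gives $c_\infty(M)\leqslant c(N)$.

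The main obstacle is the case in which $\gamma$ has finite order in $\pi_1(M)$: orbits of $\langle\gamma\rangle$ are then uniformly bounded and one cannot force $k/d\to 0$ via covers. By geometrization, however, the fundamental group of an irreducible aspherical $3$-manifold is torsion-free, so this failure only occurs when $M$ is either elliptic (in which case $c_\infty(M)=0\leqslant c(N)$ by Proposition~\ref{elliptic}) or reducible (in which case the inequality reduces to the connected-summand case, handled separately via the additivity of $c_\infty$ on connected sums).
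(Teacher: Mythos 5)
Your proof is essentially the paper's: the same spine $Q=P\cup D$ obtained by adding a single meridian disc of the filling torus, the same use of residual finiteness to produce finite covers in which every lifted filling torus winds at least $n$ times, and the same count showing that only one meridian disc per lifted solid torus is needed, so the extra vertices contribute $O(d/n)$ and vanish in the stable limit. The only place you go beyond the paper is the final paragraph on torsion: you are right that the covering construction needs the core $\gamma$ of $V$ to have infinite order in $\pi_1(M)$ (a hypothesis the paper's proof uses silently when it asserts the existence of the unwrapping covers), and your disposal of the elliptic case is fine, but the claim that the reducible case ``reduces to the connected-summand case'' is not an argument --- $N=M\setminus V$ need not decompose compatibly with the prime decomposition of $M$, and additivity of $c_\infty$ over the summands of $M$ gives no a priori comparison with $c(N)$. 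Since the proposition is only ever applied to hyperbolic $M$, this loose end is harmless for the paper's purposes, but as written your patch for the finite-order case is incomplete.
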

\begin{proof}
The proof is similar to that of Lemma \ref{tori:lemma}. Given a simple spine $P$ of $N$, it is easy to construct a spine $Q$ of $M$. Recall that $N\setminus P$ consists of a collar of the boundary plus possibly some open balls. Then $M\setminus P$ consists of a collar of the boundary (if non-empty), plus possibly some open balls, plus an open solid torus $V$ created by the Dehn filling. Let $D$ be a meridian disc of $V$, and take $Q = P \cup D$. If $D$ is generic, the resulting polyhedron $Q$ is still simple. Now $V\setminus D$ is an open ball and hence $Q$ is a spine of $M$.

As in the proof of Lemma \ref{tori:lemma}, colour the added disc $D$ in green. There are now three types $A$, $B$, and $D$ of vertices in $Q$, as shown in Fig.~\ref{types:fig}. Let $v_A$, $v_B$, and $v_D$ be the number of vertices of type $A$, $B$, and $D$. 

Since $\pi_1(M)$ is residually finite \cite{Hem}, for every $n>0$ there is an $h>0$ and a regular covering $p\colon \widetilde M \stackrel {hn}\to M$ such that $p^{-1}(V)$ consists of $h$ open solid tori $\widetilde V_1,\ldots, \widetilde V_h$, each winding $n$ times along $V$ via $p$. The covering has degree $d=hn$. The spine $Q$ of $M$ lifts to a spine $p^{-1}(Q) = \widetilde Q$ of $\widetilde M$, which contains $dv_A$, $dv_B$, and $dv_D$ vertices of type $A$, $B$, and $D$. The disc $D$ lifts to $n$ discs inside each $\widetilde V_i$. These $n$ discs subdivide $\widetilde V_i$ into $n$ open balls. We now remove from $\widetilde Q$ some $n-1$ of these $n$ discs, leaving only one disc $\widetilde D_i\subset \widetilde V_i$, whose complement in $\widetilde V_i$ is a single open ball. If we do such removals for every $i=1,\ldots, h$ we are left with a spine $\widetilde Q'\subset \widetilde Q$ with fewer vertices.

The number of vertices of type $A$, $B$, and $D$ in $\widetilde Q$ is at most $dv_A$, $hv_B$, and $hv_D$. Therefore
$$c(\widetilde M) \leqslant dv_A + h(v_B+v_D).$$
Suppose that $P$ has the minimal number of vertices for $N$. Then $v_A$ equals $c(N)$ and we get
$$c_\infty(M) \leqslant \frac {c(\widetilde M)}d \leqslant v_A + \frac{v_B+v_D}n.$$
Since this equality holds for every $n>0$ we get $c_\infty(M) \leqslant c(N)$.
\end{proof}

Note that we do not know if $c_\infty(M)$ is smaller than $c_\infty(N)$. However, this result is enough to deduce the following corollary, which implies
in turn Theorem~\ref{3:teo}, since $\sigma_\infty(M)=c_\infty(M)$ for every closed hyperbolic $3$-manifold $M$. 
\begin{cor} \label{figure-eight:cor}
Let $M_i$ be any sequence of distinct Dehn fillings on the figure-eight knot complement. We have
$$\frac{c_\infty(M_i)}{\|M_i\|} \to 1.$$
\end{cor}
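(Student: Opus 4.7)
The plan is to sandwich $c_\infty(M_i)$ between $\|M_i\|$ and the constant $c(N)=2$, where $N$ denotes the figure-eight knot complement, and then to invoke Thurston's hyperbolic Dehn surgery theorem to show $\|M_i\|\to 2$.

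First, I would recall that $N$ decomposes into two regular ideal tetrahedra, which gives simultaneously $c(N)=2$ and $\vol(N)=2v_3$, hence $\|N\|=2$ by Theorem~\ref{prop:teo}. Proposition~\ref{filling:prop}, applied to $N$, immediately yields the topological upper bound
$$c_\infty(M_i)\leqslant c(N)=2$$
for every Dehn filling $M_i$ of $N$, uniformly in $i$.

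Next, I would invoke Thurston's hyperbolic Dehn surgery theorem. The slopes on the cusp of $N$ form a countable set, and each closed manifold arises as a Dehn filling along only finitely many slopes (the figure-eight knot complement has a finite symmetry group). Hence the hypothesis that the manifolds $M_i$ are pairwise distinct forces the corresponding slopes to exit every compact subset of the cusp. Therefore, for $i$ large enough, $M_i$ is closed hyperbolic with $\vol(M_i)\to\vol(N)=2v_3$, and Theorem~\ref{prop:teo} gives $\|M_i\|\to 2$.

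Finally, Proposition~\ref{smaller:prop} (which applies unconditionally to closed 3-manifolds, thanks to geometrization) supplies the lower bound $\|M_i\|\leqslant c_\infty(M_i)$. Combined with the upper bound above we obtain
$$\|M_i\|\leqslant c_\infty(M_i)\leqslant 2$$
for large $i$; since $\|M_i\|\to 2$, the sandwich forces $c_\infty(M_i)\to 2$ as well, and hence $c_\infty(M_i)/\|M_i\|\to 1$. There is no serious obstacle here: the argument is an assembly of Proposition~\ref{filling:prop}, Proposition~\ref{smaller:prop}, and Thurston's Dehn surgery theorem around the numerical coincidence $c(N)=\|N\|=2$ for the figure-eight knot complement. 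The only minor point to verify is that distinct filled manifolds force the slopes to escape, which follows from the finiteness of the symmetry group of $N$.
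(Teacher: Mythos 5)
Your argument is correct and is essentially identical to the paper's proof: the upper bound $c_\infty(M_i)\leqslant c(N)=2$ from Proposition~\ref{filling:prop}, the convergence $\|M_i\|\to 2$ from Thurston's Dehn filling theorem together with Theorem~\ref{prop:teo}, and the lower bound $\|M_i\|\leqslant c_\infty(M_i)$. The only (harmless) extra is your aside on why distinct fillings force the slopes to escape; if one wants to justify that a single manifold arises from only finitely many slopes, the clean argument is via volumes (infinitely many slopes yielding the same $M$ would force $\vol(M)=\vol(N)$, contradicting $\vol(M)<\vol(N)$) rather than the finiteness of the symmetry group, which by itself does not rule out cosmetic surgeries.
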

\begin{proof}
Let $N$ be the figure-eight knot complement. We have $c_\infty(M_i) \leqslant c(N) = 2$ for all $i$. By Thurston's Dehn filling Theorem~\cite{Thurston} we also get $\Vol(M_i)\to \Vol(N)$ and hence $\|M_i\|\to \|N\| = 2$. Since $c_\infty(M_i)/\|M_i\|\geqslant 1$ the conclusion follows.
\end{proof}

\section{Concluding remarks}\label{futuro:sec}

In this section we show how our arguments
can be adapted to prove that the stable integral simplicial volume is strictly bigger than the simplicial volume
for closed hyperbolic manifolds of dimension at least four. Moreover, 
we describe some possible approaches to prove or disprove that 
$\sigma_\infty (M)=\| M\|$ for every (or some) closed hyperbolic $3$-manifold $M$.

\subsection{Stable integral simplicial volume and Gromov's Question~\ref{gromov:conj}}
Proposition \ref{chi2:prop} holds (with a similar proof) also for the stable integral simplicial volume: 

\begin{prop}\label{last:prop}
Let $M$ be a closed $n$-dimensional manifold. We have
$$|\chi(M)|\leqslant (n+1)\cdot \|M\|_\infty^\matZ.$$
\end{prop}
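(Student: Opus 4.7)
The proof strategy parallels that of Proposition~\ref{chi2:prop}: I first establish the inequality
$$|\chi(M)| \leqslant (n+1)\,\|M\|^\matZ$$
for every closed orientable $n$-manifold $M$, and then stabilize. The stabilization is immediate from the fact that $\chi$ is a characteristic number: applied to any degree-$d$ covering $\widetilde M \to M$ and divided by $d$, the above inequality gives $|\chi(M)| \leqslant (n+1)\,\|\widetilde M\|^\matZ/d$, and taking the infimum over all coverings yields the desired $|\chi(M)| \leqslant (n+1)\,\|M\|_\infty^\matZ$. The non-orientable case reduces to the orientable double cover, since both $|\chi|$ and $\|\cdot\|^\matZ$ halve in passing from the double cover to $M$.

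The heart of the proof is the following bound on the rational Betti numbers:
$$b_p(M;\matQ) \leqslant \|M\|^\matZ \qquad \textrm{for every } 0\leqslant p\leqslant n. \qquad (*)$$
Granting $(*)$, we immediately get $|\chi(M)| \leqslant \sum_{p=0}^{n} b_p(M;\matQ) \leqslant (n+1)\,\|M\|^\matZ$, which is what we want. I would prove $(*)$ via Poincar\'e duality, realised through the cap product with a minimal integral fundamental cycle. Let $z = \sum_{i=1}^{k} \lambda_i\sigma_i$ be an integral cycle with $[z]=[M]^\matZ$, pairwise distinct $\sigma_i$, and $\sum_i |\lambda_i|=\|M\|^\matZ$; since each $|\lambda_i|\geqslant 1$ we have $k\leqslant \|M\|^\matZ$. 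Using the standard chain-level formula for the cap product,
$$
\alpha\cap\sigma_i \,=\, \alpha\bigl(\sigma_i|_{[v_0,\dots,v_p]}\bigr)\cdot \sigma_i|_{[v_p,\dots,v_n]},
$$
we see that for every $\alpha \in C^p(M;\matQ)$ the chain $\alpha\cap z$ lies in the $\matQ$-linear span of the $(n-p)$-simplices $\{\sigma_i|_{[v_p,\dots,v_n]}\}_{i=1}^k$, a subspace of $C_{n-p}(M;\matQ)$ of dimension at most $k$. Hence the image of the chain-level map $\cap z$, and in particular the image of the induced map on homology, has dimension at most $k$. By Poincar\'e duality $\cap [M]\colon H^p(M;\matQ)\stackrel{\cong}{\to} H_{n-p}(M;\matQ)$ is an isomorphism, so $b_{n-p}(M;\matQ)\leqslant k \leqslant \|M\|^\matZ$; the symmetry $b_p=b_{n-p}$ completes the proof of $(*)$.

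No major obstacle is expected: Poincar\'e duality over $\matQ$, the chain-level cap-product formula, and multiplicativity of $\chi$ under finite coverings are all standard. The only mildly delicate point is observing that the singular simplices in a minimizing integral fundamental cycle can be assumed pairwise distinct, which is clear because combining repeated simplices only decreases the $\ell^1$-norm of the coefficients; this ensures that the crucial inequality $k\leqslant \|M\|^\matZ$ is an honest bound on the number of $(n-p)$-faces that carry $\alpha\cap z$.
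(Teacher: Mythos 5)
Your proposal is correct and follows essentially the same route as the paper: the key step in both is the Poincar\'e duality bound $\sum_{i=0}^n b_i(M)\leqslant (n+1)\|M\|^\matZ$, followed by stabilization via the multiplicativity of $\chi$ under finite coverings. The only difference is that the paper cites L\"oh for this Betti number inequality, whereas you supply the cap-product argument in full (correctly, including the reduction to pairwise distinct simplices so that $k\leqslant\|M\|^\matZ$).
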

\begin{proof}
Using Poincar\'e duality, it is not difficult to prove the following inequality~\cite{Loeh2}, 
similar to the one we used in the proof of Proposition \ref{chi2:prop}:
\begin{equation}\label{aaa}
\sum_{i=0}^n b_i(M)\leqslant (n+1)\cdot  \|M\|^\matZ
\end{equation}
(see also~\cite[Example 14.28]{luck}).
As a consequence we get $|\chi(M)|\leqslant (n+1)  \|M\|^\matZ$ and
$$
|\chi(M)|\leqslant (n+1) \cdot \|M\|^\matZ_\infty
$$ 
since the Euler characteristic is a characteristic number.
\end{proof}
Therefore if $\|M\|_\infty^\matZ$ were equal to $\|M\|$ for any aspherical manifold $M$ then we could answer positively Gromov's Question~\ref{gromov:conj}. However, as for $\sigma_\infty$, the two characteristic numbers differ at least on hyperbolic manifolds of dimension $n\geqslant 4$.
\begin{teo}\label{integral}
For every $n\geqslant 4$ there exists a constant $C_n<1$ such that the following holds.
Let $M$ be a closed orientable hyperbolic manifold of dimension $n\geqslant 4$.
Then 
$$
\|M\| \leqslant C_n \|M\|^\matZ_\infty.
$$
\end{teo}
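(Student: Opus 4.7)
\emph{Reduction to a non-stable inequality.} Since the simplicial volume is multiplicative under finite coverings, for every finite cover $N\stackrel d\to M$ we have $\|M\|=\|N\|/d$, hence
$$
\frac{\|M\|}{\|M\|^\matZ_\infty} \; = \; \sup_{N\stackrel d\to M} \frac{\|N\|}{\|N\|^\matZ}.
$$
Every finite cover of a closed hyperbolic $n$-manifold is itself closed hyperbolic of dimension $n$, so Theorem~\ref{integral} will follow once we prove the following non-stable statement: for every $n\geqslant 4$ there exists a constant $C_n<1$ such that $\|N\|\leqslant C_n\|N\|^\matZ$ for every closed orientable hyperbolic $n$-manifold $N$.

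\emph{From integral cycles to multi-sets of straight simplices.} Fix such an $N$ and an integral cycle $z=\sum_i \lambda_i\sigma_i$ representing $[N]^\matZ$ with $\sum_i|\lambda_i|$ arbitrarily close to $\|N\|^\matZ$. By replacing each $\sigma_i$ with $\sigma_i\circ\overline\tau$ for an odd permutation $\tau$ whenever $\lambda_i<0$, we may assume $\lambda_i\geqslant 1$ for every $i$; listing each $\sigma_i$ with multiplicity $\lambda_i$ we obtain singular simplices $\tau_1,\dots,\tau_t$ with $t=\sum_i\lambda_i$. Since straightening is chain-homotopic to the identity, the straight chain $\sum_j \tau_j^\stra$ still represents $[N]^\matZ$ and
$$
\Vol(N)\;=\;\sum_{j=1}^t \int_{\tau_j^\stra}d\vol.
$$
Following Section~\ref{higher}, call $\tau_j$ \emph{good} if $\tau_j^\stra$ is $\vare_n$-big and positively oriented, and \emph{bad} otherwise (with $\vare_n$ as in Lemma~\ref{maximal-angle}). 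If at least $t/12$ of the $\tau_j$ are bad, then $\int_{\tau_j^\stra}d\vol\leqslant (1-\vare_n)v_n$ for them and $\leqslant v_n$ for the rest, yielding $\Vol(N)\leqslant (1-\vare_n/12)v_n\,t$ exactly as in Lemma~\ref{stima1a}.

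\emph{Dihedral angle obstruction in the cycle case.} In the remaining case, fewer than $t/12$ simplices are bad and we adapt the argument of Subsection~\ref{proof:sub}. The key replacement for the triangulation bookkeeping is a pointwise angular identity. Let $p\in N$ be a generic point of the $(n-2)$-stratum, meaning $p$ lies in the interior of an $(n-2)$-face of some $\tau_j^\stra$ but avoids the $(n-1)$-faces and $(n-3)$-faces of all $\tau_k^\stra$. Computing the local degree of the cycle $\sum_j \tau_j^\stra$ on a small ball around $p$ yields
$$
2\pi\sum_{p\in\interior{\tau_k^\stra}}\!\!\epsilon_k \;+\sum_{p\in F_k}\!\!\epsilon_k\,\alpha(\tau_k^\stra,F_k)\;=\;2\pi,
$$
where $\epsilon_k\in\{\pm 1\}$ is the sign of $\tau_k^\stra$ and $F_k$ denotes the $(n-2)$-face of $\tau_k^\stra$ containing $p$. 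By Lemma~\ref{maximal-angle}, each good simplex contributes a dihedral angle strictly between $\frac{2\pi(1+a_n)}{k_n+1}$ and $\frac{2\pi(1-a_n)}{k_n}$, and no combination of such angles alone can equal $2\pi$ since that would force an integer in the empty interval $\bigl(\frac{k_n}{1-a_n},\frac{k_n+1}{1+a_n}\bigr)$. Hence near every such $p$ either some incident $\tau_k$ is bad, or some $\tau_k^\stra$ contains $p$ in its interior, producing an overlap that wastes volume.

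\emph{Conclusion and main difficulty.} Declaring an $(n-2)$-face of some $\tau_j^\stra$ \emph{full} when at least $k_n+1$ good simplices are incident to it, the combinatorial counting of Lemma~\ref{stimaN} carries over: few bad simplices force many full faces. Around the incenter of each full face, Lemma~\ref{palle} provides a ball of radius $\delta_n$ where the angular identity above forces an extra straight simplex to overlap. The decomposition $N=M_1\cup M_2\cup M_3$ of Subsection~\ref{proof:sub} and the estimates of Lemmas~\ref{stima2}--\ref{stima3} then yield $\Vol(N)\leqslant C_nv_n t$ with essentially the same $C_n<1$ as in Theorem~\ref{bah}. The main obstacle is that, unlike in a triangulation, two straight simplices arising from an integral cycle need not match along entire $(n-1)$-faces; hence ``incidence of an $(n-2)$-face'' and ``number of simplices meeting along $F$'' must be recast as pointwise statements at generic points of the $(n-2)$-stratum. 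Once this bookkeeping is set up, the volume accounting runs parallel to Section~\ref{higher}.
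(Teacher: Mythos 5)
Your reduction to the non-stable inequality and the overall plan (straighten the cycle, exploit the dihedral-angle obstruction for $n\geqslant 4$, reuse the decomposition into $M_1\cup M_2\cup M_3$) coincide with the paper's. But there is a genuine gap, and it is exactly the point you flag in your last sentences and then defer: without a combinatorial incidence structure on the $(n-2)$-faces, the quantities that drive Lemmas~\ref{stimaN}, \ref{stima2} and~\ref{stima3} --- the set $\Full(\Tt)$, the numbers $e_{\rm f}$ and $N$, and the pairwise disjoint balls $B(\inc(\str(E)),\delta_n)$ centered at incenters of full faces --- are simply not defined. Your pointwise angular identity at generic points of the $(n-2)$-stratum is plausible (it is the integrated form of the statement that the straightened cycle has local degree one), but it does not by itself yield a count of full faces, nor a family of disjoint $\delta_n$-balls in which overlap can be charged, nor the inequality $N\geqslant 5t$: all of these count incidences of \emph{abstract} $(n-2)$-faces of a triangulation under its face pairings, whereas two straight simplices of a cycle meeting at a generic point $p$ of the stratum need not share an $(n-2)$-face, or indeed any face, at all. ``Setting up the bookkeeping'' is the actual content of the adaptation, not a routine afterthought.

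The paper's resolution is short but essential, and you should substitute it for your pointwise recasting. Since $z=\sum_i\epsilon_i\sigma_i$ is a cycle with coefficients $\pm1$, its $(n-1)$-dimensional faces admit a complete pairing compatible with the maps $\sigma_i$; this pairing assembles the $m$ standard simplices into a pseudomanifold $X$ carrying a loose triangulation $\Tt$ and a map $g\colon X\to M$ with $g_*(z')=z$, where $z'$ is the sum of the simplices of $\Tt$. One then straightens $g$ as in Subsection~\ref{stmap}, and every combinatorial notion of Subsection~\ref{proof:sub} (incidence at $(n-2)$-faces counted with multiplicity, full faces, $e_{\rm f}$, $N$, the incenter balls) makes literal sense on $\Tt$, so the estimates go through unchanged. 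A secondary remark: composing $\sigma_i$ with an odd permutation to force positive coefficients does not literally return a cycle (it changes the chain only up to a boundary), so either keep the signs $\epsilon_i$ throughout, as the paper does, or justify separately that the modified chain still computes both the volume and the local degree.
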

\begin{proof}
Just as in the proof of Theorem~\ref{4:teo}, 
it is sufficient to show that 
$$
\frac{\vol(M)}{v_n}\leqslant C_n \| M\|^\matZ
$$ 
for some constant $C_n<1$ independent of $M$.
Let us now briefly describe how the proof of Theorem~\ref{bah} can be adapted to
achieve this goal.

Let $z=\sum_{i=1}^m \epsilon_i \sigma_i$ be an integral cycle representing the fundamental class of $M$, and suppose
that $\epsilon_i=\pm 1$ for every $i$ (so we don't exclude the case that $\sigma_i=\sigma_j$ for some $i\neq j$).
We may also assume that $z$ realizes the integral simplicial volume of $M$, so $\| M\|^\matZ=m$.
The cycle $z$ defines a function $f$ from the disjoint union of $m$ copies of the standard $n$-dimensional simplex
to $M$. 
Since $z$ is a cycle, there exists a complete pairing of the $(n-1)$-dimensional faces of these standard simplices
such that paired faces can be identified by a simplicial isomorphism which is compatible with $f$. These data
define a pseudomanifold $X$ endowed with a (loose) triangulation $\Tt$ with $m$ simplices and
a map $g\colon X\to M$ induced by $f$. 
By construction, the sum of the simplices of $\Tt$ defines an $n$-cycle $z'\in Z_n(X,\matZ)$, and $g_\ast(z')=z$. We can now mimic the construction described in Subsection~\ref{stmap} and homotope 
$g$ into a map $\str(g)\colon X\to M$ which sends each simplex of $\Tt$ into the support of a straight simplex
in $M$. As a consequence, we may still define positive, negative, $\varepsilon$-big and $\varepsilon$-small simplices of $\Tt$,
and full $(n-2)$-dimensional faces of $\Tt$. The estimates described in Subsection~\ref{proof:sub}
still hold, and this provides the needed constant $C_n<1$ such that $\vol(M)\leqslant C_n v_n m=C_n v_n \| M\|^\matZ$. 
\end{proof}

\subsection{$L^2$-Betti numbers}\label{futuro:sub}
As already mentioned in the introduction, Gromov was primarily interested
in the comparison of simplicial volume with the Euler characteristic. 
In~\cite{Gromov2} and~\cite{Gro3},
he suggested to use $L^2$-invariants to attack this problem.
More precisely, in \cite[page 232]{Gromov2}
he observed that Question~\ref{gromov:conj}
may be reduced to Question~\ref{gromov2:conj} below, which is formulated
in terms of $L^2$-Betti numbers.

The $L^2$-Betti numbers were first defined analytically by Atiyah in terms
of the heat kernel in the context
of cocompact groups actions on manifolds~\cite{Aty}.
Since then, the range of definition and application of $L^2$-Betti numbers  
was impressively widened, see \emph{e.g.}~\cite{Connes, CheeGro, Lupaper, Farber, Gab}.
A comprehensive introduction to $L^2$-Betti numbers may be
found in L\"uck's book \cite{luck}.

One of the most important features of $L^2$-Betti numbers is that they can be defined
both analytically and combinatorially. In the topological-combinatorial setting,
the $L^2$-invariants are mainly based on the study of the action of the fundamental
group of a space on the cellular chain complex of its universal covering. 
Here, if $M$ is a closed manifold, we denote by 
$b_k^{(2)}(M)$ the $k$-th $L^2$-Betti number 
$b_k^{(2)}(\widetilde{M},\pi_1(M))$ as defined in~\cite[Chapter 6]{luck}, where
$\widetilde{M}$ is the universal covering of $M$ and $\pi_1(M)$ acts as usual on $\widetilde{M}$.
In~\cite{Gromov2}, Gromov asked the following:

\begin{quest}[\cite{Gromov2}, page 232]\label{gromov2:conj}
Let $M$ be a closed aspherical manifold 
such that $\| M\|=0$.
Is that true that
$
b_k^{(2)}(M)=0$ 
for every
$k\in\matN
$?
\end{quest}

In order to explain how Question~\ref{gromov2:conj} is related to Question~\ref{gromov:conj},
let us briefly mention some important properties of $L^2$-Betti numbers:
\begin{enumerate}
\item
they are characteristic numbers, \emph{i.e.}~they are multiplicative with respect to finite coverings \cite[Theorem 1.35]{luck};
\item
$b_k^{(2)}(M)$ is a sort of stable
version of the
$k$-th Betti number $b_k(M)$ of $M$; more precisely, it is proved in~\cite{luckpaper} that
if $\pi_1(M)=G$ admits a sequence of nested normal subgroups $G\supseteq G_1\supseteq G_2\supseteq\ldots$
of finite index such that $\bigcap_{i\in\matN} G_i=\{1\}$, then
$$
b_k^{(2)}(M)=\inf_{\widetilde{M}_i \stackrel{d_i}{\to} M} \left\{\frac{b_k(\widetilde M_i)}{d_i}\right\},
$$
where $\widetilde M_i \stackrel{d_i}\to M$ is the covering associated to $G_i$
(see also~\cite{BerGab} for an extension of this result to the case
when $\Gamma_i$ is not supposed to be normal in $\Gamma$);
\item
if $M$ is $n$-dimensional, then $b_k^{(2)}(M)=0$ for $k>n$ and 
$$
\sum_{i=0}^n (-1)^ib_i^{(2)}(M)=\chi (M)\ 
$$
(see \cite[Theorem 1.35]{luck}).
\end{enumerate}
By property (3), a positive answer to Question~\ref{gromov2:conj} would lead to a positive answer
to Question~\ref{gromov:conj}.

\subsection{Integral foliated simplicial volume}\label{futurobis:sub}
In order to study Question~\ref{gromov2:conj}, Gromov introduced in~\cite{Gro3}
a new invariant, the \emph{integral foliated simplicial volume} $\| M\|^{\matZ,\mathcal{F}}$ of $M$ (see~\cite{Schmidt} for a precise definition and all the properties
of $\| M\|^{\matZ,\mathcal{F}}$ that we are mentioning below).
The integral foliated simplicial volume satisfies the inequalities
$$
\| M\|\leqslant \| M\|^{\matZ,\mathcal{F}}\leqslant \| M\|^\matZ .
$$
Moreover, the integral foliated simplicial volume is a characteristic number, so
$$
\| M\|\leqslant \| M\|^{\matZ,\mathcal{F}}\leqslant \| M\|^\matZ_\infty .
$$
As stated by Gromov in~\cite{Gro3} and proved by Schmidt in~\cite{Schmidt},
the integral foliated simplicial volume can be used to bound from above 
the
sum of the $L^2$-Betti numbers of a closed manifold. More precisely, if $M$
is a closed $n$-manifold then the following 
 $L^2$-analogous of inequality~\eqref{aaa}
holds~\cite{Loeh2}:
$$
\sum_{i=0}^n b_i^{(2)} (M)\leqslant (n+1) \| M\|^{\matZ,\mathcal{F}}.
$$
In particular,
a closed manifold with vanishing integral foliated simplicial volume has vanishing $L^2$-Betti numbers
(whence has vanishing Euler characteristic). However, the problem whether the 
vanishing of the simplicial volume implies the vanishing of the
integral foliated simplicial volume
at least in the case of aspherical manifolds, is still open. In fact, as far as we know, no example is known
of a closed aspherical $n$-manifold $M$ such that $\| M\|\neq \| M\|^{\matZ,\mathcal{F}}$.
Therefore, we ask here the following:
 
\begin{quest}
May our proof of Theorem~\ref{integral} be adapted to show that  $$\| M\|^{\matZ,\mathcal{F}}> \| M\|$$
for every closed hyperbolic manifold $M$ of dimension greater than 3?
\end{quest}

\subsection{The three-dimensional case}\label{futuro2:sub}
Let us now concentrate on our unsolved
\begin{quest}\label{ourquestion}
 Does the equality
$$
\| M\|=\sigma_\infty (M)
$$ 
hold for every closed hyperbolic $3$-manifold $M$?
\end{quest}

In their recent proof of the Ehrenpreis conjecture~\cite{KM}, 
Kahn and Markovic showed that every closed orientable hyperbolic surface $S$ 
has a finite covering which decomposes into pairs of pants whose boundary curves
have length arbitrarily close to an arbitrarily  big constant $R>0$.
Question~\ref{ourquestion} is equivalent to some sort of $3$-dimensional
version of Kahn and Marcovic's result. Namely, the discussion carried
out in the previous sections shows that 
$\| M\|=\sigma_\infty (M)$ if and only if the following condition holds:
for every $\varepsilon>0$, $R>0$
there exists a finite covering $\widetilde{M}$ with a triangulation $\widetilde{\Tt}$
such that  the shape of at least $(1-\varepsilon) |\widetilde{\Tt}|$ simplices of $\widetilde{\Tt}$ is $\varepsilon$-close to the shape of a regular positive simplex with edge-length 
bigger than $R$.
Let us briefly describe some possible strategies to approach Question~\ref{ourquestion}.

\subsection{How to answer Question~\ref{ourquestion} in the positive}
Corollary \ref{figure-eight:cor} says that it is possible to make the ratio $\frac{\sigma_\infty(M)}{\|M\|}$ arbitrarily close to 1 by Dehn filling the figure-eight knot complement along arbitrarily long slopes (the \emph{length} of a slope is measured with respect to a fixed toric horocusp section). This fact can be easily generalized to any manifold $M$ covering the figure-eight knot complement:

\begin{prop} Let $M_i$ be a sequence of manifolds covering the figure-eight knot. For each $i$, let $\alpha_i = (\alpha_i^1,\ldots, \alpha_i^{k_i})$ be a set of slopes on the $k_i$ boundary components of $M_i$, and let $\ell_i$ be their minimal length with respect to some horocusp section. Let $N_i$ be the closed manifold obtained by filling $M_i$ along $\alpha_i$. If $\ell_i \to \infty$ then $N_i$ is hyperbolic for all sufficiently big $i$ and we have 
$$\frac{\sigma_\infty(N_i)}{\|N_i\|} \to 1.$$
\end{prop}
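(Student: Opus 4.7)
The plan combines Proposition~\ref{filling:prop}, the multiplicativity of $\|\cdot\|$ and $c$ under coverings, and a uniform version of Thurston's Dehn filling theorem. Let $K$ denote the figure-eight knot complement, so that $c(K)=\|K\|=2$ (as recorded in Section~\ref{Three:section}), and let $d_i$ be the covering degree of the map $M_i\to K$. Theorem~\ref{immediate:teo} together with multiplicativity of the simplicial volume give $c(M_i)\leqslant 2d_i$ and $\|M_i\|=2d_i$.

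The first step is to extend Proposition~\ref{filling:prop} from a single Dehn filling to simultaneous fillings of all $k_i$ boundary tori of $M_i$, yielding $c_\infty(N_i)\leqslant c(M_i)$. The proof of Proposition~\ref{filling:prop} starts from a minimal spine of the cusped manifold, adds one meridian disc to obtain a spine of the filled manifold, and passes to a covering in which the filling solid torus lifts to many solid tori winding $n$ times around the original one; simplification then leaves only the original type-$A$ vertices counted full-weight while the added type-$B$ and type-$D$ vertices contribute $O(1/n)$ to $c_\infty$. The same argument works verbatim if one adds a meridian disc per boundary component of $M_i$ and, using residual finiteness of $\pi_1(N_i)$ exactly as in Corollary~\ref{glue:cor}, chooses a cover that simultaneously unwinds each filling solid torus by some $n_j$ with $n_j\to\infty$. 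This gives $c_\infty(N_i)\leqslant c(M_i)\leqslant 2d_i$. For $i$ large enough Thurston's Dehn filling theorem ensures that $N_i$ is closed hyperbolic, hence irreducible with infinite $\pi_1$, so the equality $\sigma_\infty=c_\infty$ from Section~\ref{complexity:section} yields $\sigma_\infty(N_i)\leqslant \|M_i\|$.

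The final ingredient is the estimate $\|N_i\|/\|M_i\|\to 1$. Fix a horocusp section on $K$ and lift it to each $M_i$; because every cusp of $M_i$ covers the single cusp of $K$, the Euclidean similarity classes of these cusps form a precompact family. The Neumann--Zagier/Thurston Dehn filling volume estimate then yields a constant $C$ independent of $i$ such that
$$\vol(M_i)-\vol(N_i)\;\leqslant\; C\sum_{j=1}^{k_i}\frac{1}{(\ell_i^j)^2}\;\leqslant\; C\,\frac{k_i}{\ell_i^2}\;\leqslant\; C\,\frac{d_i}{\ell_i^2}.$$
Dividing by $\vol(M_i)=2v_3 d_i$ and applying Theorem~\ref{prop:teo} gives $\|N_i\|/\|M_i\|\to 1$, so combining with the previous step
$$1\;\leqslant\;\frac{\sigma_\infty(N_i)}{\|N_i\|}\;\leqslant\;\frac{\|M_i\|}{\|N_i\|}\;\longrightarrow\; 1.$$
The principal obstacle is the \emph{uniformity} of this last volume estimate: Thurston's theorem is usually stated for a fixed cusped manifold, whereas here $M_i$ varies and the number of cusps $k_i$ can grow with $i$. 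This is precisely the role of the hypothesis that all $M_i$ cover the same $K$, which forces the cusp geometries to stay in a compact family and lets the additive errors $C d_i/\ell_i^2$ be absorbed into the multiplicatively growing volume $2v_3 d_i$.
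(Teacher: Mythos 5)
Your argument follows the paper's proof almost step for step: compute $\|M_i\|=2d_i$ and $c(M_i)\leqslant 2d_i$ from the lifted ideal triangulation by regular ideal tetrahedra, bound $c_\infty(N_i)\leqslant c(M_i)$ via (a multi-cusp version of) Proposition~\ref{filling:prop}, identify $c_\infty$ with $\sigma_\infty$ on the irreducible closed manifolds $N_i$, and conclude with a uniform Dehn-filling volume estimate. Your explicit remark that Proposition~\ref{filling:prop} must be extended to simultaneous fillings of all $k_i$ tori (rather than iterated, which would not obviously close up) is a point the paper glosses over, and your extension is correct.

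The one genuinely wrong step is the justification of uniformity in the volume estimate. The cusp cross-sections of $M_i$ are finite covers of the single cusp torus of the figure-eight complement, but finite covers of a fixed Euclidean torus do \emph{not} form a precompact family of similarity classes: the index-$n$ sublattice $\matZ\times n\matZ$ of $\matZ\times\matZ$ produces tori whose moduli escape to infinity. So the constant in a Neumann--Zagier-type expansion, which depends on the cusp shapes, cannot be taken uniform by this argument, and the inequality $\vol(M_i)-\vol(N_i)\leqslant C\sum_j (\ell_i^j)^{-2}$ is not established as written. The fix is the one the paper uses: Theorem~1.1 of \cite{FKP} gives the fully uniform bound $\vol(N_i)\geqslant \bigl(1-(2\pi/\ell_i)^2\bigr)^{3/2}\vol(M_i)$ valid for \emph{every} cusped hyperbolic $M_i$ with all filling slopes of length at least $\ell_i>2\pi$, with no compactness of cusp shapes required; this immediately yields $\vol(N_i)/\vol(M_i)\to 1$, and Theorem~\ref{prop:teo} converts this into $\|N_i\|/\|M_i\|\to 1$. (Hyperbolicity of $N_i$ for large $i$ is likewise taken from the uniform statement of \cite{HK} rather than from Thurston's theorem for a fixed manifold.) With that substitution your proof is complete and coincides with the paper's.
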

\begin{proof}
Since $\ell_i$ tends to infinity, the main result of \cite{HK} ensures that $N_i$ is hyperbolic
for all sufficiently big $i$ (in fact, geometrization
and Agol's and Lackenby's results \cite{Agol, Lackenby} imply that $N_i$ is hyperbolic provided that
$\ell_i>6$).
The estimates on volume change under Dehn filling of Neumann and Zagier \cite{NZ} or Futer, Kalfagianni, and Purcell \cite[Theorem 1.1]{FKP} show that $\frac{\vol(N_i)}{\vol (M_i)}\to 1$. Let $M_i$ cover the figure-eight knot complement with degree $d_i$. Then $M_i$ is triangulated with $2d_i$ regular ideal tetrahedra and $\Vol(M_i) = 2d_iv_3$, so $\|M_i\| = \frac{\Vol(M_i)}{v_3} = 2d_i$. Therefore $\frac{\|N_i\|}{2d_i}\to 1$.

Since $M_i$ is triangulated with $2d_i$ ideal tetrahedra we have $c(M_i)\leqslant 2d_i$ (we actually have an equality, but this is not important here). Proposition \ref{filling:prop} implies that $c_\infty(N_i)\leqslant c(M_i)\leqslant 2d_i$. Since $\|N_i\|\leqslant c_\infty(N_i)$ we get $\frac{c_\infty(N_i)}{\|N_i\|} \to 1$. Finally, we have $c_\infty(N_i) = \sigma_\infty(N_i)$ because $N_i$ is irreducible.
\end{proof}

This result leads naturally to the following definition. Let $\calH_l$ be the set of all hyperbolic manifolds that may be obtained from some covering of the figure-eight knot complement by Dehn-filling some slopes having length bigger than $l$ (with respect to some horocusp section).

\begin{quest}\label{H:quest}
Let $M$ be a closed hyperbolic manifold. Is it true that for every $l$ there is a finite-degree covering $\widetilde M$ of  $M$ lying in $\calH_l$?
\end{quest}
In other words, does $M$ virtually lie in all sets $\calH_l$? A positive answer to this question would prove that $\|M\| = \sigma_\infty(M)$. Note that every manifold is a Dehn filling of some cover of the figure-eight knot, because the figure-eight knot is universal \cite{HLM}, and hence every 3-manifold lies in some $\calH_l$. 

\begin{rem}
Ehrenpreis asked \cite{Ehre} whether any two closed Riemann surfaces with $\chi <0$ have finite coverings with arbitrarily small distance, with respect to some natural metric on the moduli spaces of Riemann surfaces. This question (which was recently answered in the affirmative \cite{KM}) can be generalized to categories of manifolds of any dimension and of any kind, provided that they are equipped with a distance function: we call such a question an \emph{Ehrenpreis problem}.
The filtration $\calH_l$ easily defines a distance $d$ which translates Question \ref{H:quest} into an Ehrenpreis problem (the distance of two distinct manifolds $M$ and $N$ is the smallest $\frac 1l$ such that $M,N\in \calH_l$).
\end{rem}

We may specialize our question to the following.

\begin{quest}
Let $M$ be a closed hyperbolic manifold. Is it true that for every $l$ there is a finite-degree covering $\widetilde M$ of $M$ and a branched covering $\widetilde M \to S^3$ branched over the figure-eight knot with ramification indices bigger than $l$?
\end{quest}
In other words, does $M$ virtually cover $S^3$ with branching locus the figure-eight knot, with arbitrarily large ramification indices? A large ramification index gives a long filling slope on the covering of the figure-eight knot complement, hence a positive answer to this question would also imply a positive answer to Question \ref{H:quest} and thus 
would prove the equality $\|M\|=\sigma_\infty (M)$.

\subsection{How to answer Question~\ref{ourquestion} in the negative}
If $M$ is a closed hyperbolic $n$-manifold 
and $z_i\in Z_n(M,\matR)$ is a sequence
of representatives of the fundamental class of $M$, 
we say that $z_i$ is \emph{minimizing}
if the $L^1$-norm of $z_i$ approaches $\| M\|$ as $i$ tends to infinity. 
In order to get a negative answer to 
Question~\ref{ourquestion}, one may probably profit from Jungreis'
characterization of minimizing sequences of fundamental cycles for $M$~\cite{Jungreis}.
Every cycle $z_i$ in such a sequence lifts to a locally finite cycle $\widetilde{z}_i$
in $\matH^n$. After straightening, $\widetilde{z}_i$ is a locally finite sum of straight simplices in $\calS_n(\matH^n)$. 

Jungreis considers a suitable 
space $\mathcal{M} (\calS_n(\mathno))$
of measures on the set of geodesic simplices with vertices in $\mathno$. Every
$\widetilde{z}_i$ may be thought as a locally finite linear combination of
atomic measures concentrated on the lifts of the simplices of $z_i$. 
Therefore, 
$\widetilde{z}_i$ may be identified with an element in $\mathcal{M} (\calS_n(\mathno))$,
and Jungreis' result implies that the sequence $\widetilde{z}_i$
converges in $\mathcal{M} (\calS_n(\mathno))$ to a measure $\mu$ that is concentrated on 
the subset of regular
ideal simplices, and is invariant with respect to the action of the group
$G$ of orientation-preserving isometries of $\matH^n$. 
Roughly speaking, the $G$-invariance of $\mu$ implies that, if $n$ is big, then
the simplices of $z_i$ must be almost homogeneously distributed in $M$. Of course,
such a behaviour of $z_i$ is strongly in contrast with the possibility that 
$z_i$ is represented by a triangulation. In order to give a negative answer
to Question~\ref{ourquestion}, one could prove
that Jungreis' result is not compatible with the fact that $z_i$ is represented by a \emph{virtual}
triangulation, \emph{i.e.}~it is obtained by suitably rescaling the push-forward of a
triangulation of a finite covering.

\end{document}